\documentclass[11pt]{article}
\usepackage{mathrsfs}
\usepackage{amsmath, mathtools}
\usepackage{amssymb}
\usepackage{amsthm}
\usepackage{graphicx}
\usepackage{epic}
\usepackage{xcolor,cite}
\usepackage{cases}
\usepackage{pst-poly}  
\usepackage{pst-plot}  
\usepackage{pst-poly}  
\usepackage{CJK,makecell}

\usepackage{verbatim}
\usepackage{subfig}
\usepackage{booktabs}
\definecolor{dkgreen}{rgb}{0,0.6,0}
\numberwithin{equation}{section}

\usepackage{etex}

\usepackage[left,mathlines,displaymath]{lineno}

\usepackage{pgf}
\renewcommand{\paragraph}{\roman{paragraph}}

\usepackage{bm}
\usepackage[hidelinks]{hyperref}
\usepackage{tikz}
\usetikzlibrary{automata}
\usepackage{enumitem}

\usetikzlibrary{arrows,shapes,positioning}
\usetikzlibrary{decorations.markings}
\tikzstyle arrowstyle=[scale=1]
\tikzstyle directed=[postaction={decorate,decoration={markings, mark=at position .65 with {\arrow[arrowstyle]{stealth}}}}]
\tikzstyle reverse directed=[postaction={decorate,decoration={markings, mark=at position .65 with {\arrowreversed[arrowstyle]{stealth};}}}]

\newtheorem{claim}{Claim}[section]
\newtheorem{theorem}{Theorem}[section]
\newtheorem{corollary}[theorem]{Corollary}

\newtheorem{lemma}[theorem]{Lemma}

\newtheorem{proposition}[theorem]{Proposition}
\newtheorem{Remark}[theorem]{Remark}

\begin{document}

\title{On asymptotic values for the  minimum number of spanning forests in simple regular graphs\thanks{$\dag$ Corresponding author. E-mail addresses: xush0928@163.com (S.\ Xu), kexxu1221@126.com
(K.\ Xu).}}\author{{Shaohan Xu $^{a}$, Kexiang Xu $^{a,\dag}$}\\\\{\small $^{a}$ School of Mathematics, Nanjing University of Aeronautics and Astronautics,}\\
{\small Nanjing, Jiangsu 211016, PR China}\\
}

\maketitle
\begin{abstract}
Let $F(G)$ be the number of spanning forests in a graph $G$ and $\mathcal{C}(n,d)$ be the set of all connected $d$-regular simple graphs of order $n$. Define
$\widehat{f}_{d}=\liminf_{n\rightarrow \infty}\{F(G)^{1/n}:G\in \mathcal{C}(n,d)\}$. Let  $n_i$  be the number of vertices of degree $i$ in $G$. In this paper we give two lower bounds for $F(G)$ in terms of $n_i$ in  connected graphs whose vertex degrees belong to $\{2,3\}$ and $\{2,3,4\}$, respectively. Furthermore, we determine the exact values of $\widehat{f}_3$ and $\widehat{f}_4$.\\
\noindent{\bf Keywords:} spanning forest, regular graph, enumeration.\\
\noindent{{\bf  2020 Mathematics Subject Classification:}  05C05, 05C30, 05C35.}
\end{abstract}

\section{Introduction}
All graphs considered in this paper are undirected and  loopless. By a graph we mean a  simple graph,  while a multigraph refers to a graph that may  have multiple edges.  Let $G$ be a graph with vertex set $V(G)$ and edge set $E(G)$. A {\it spanning forest} in the graph $G$ is a spanning subgraph of $G$  which is a disjoint union of trees. Here an isolated vertex is considered as a tree.   Denote  by $F(G)$  the number of spanning forests in $G$ and use $\tau(G)$ to denote the number of spanning trees of $G$.

A graph is  {\it $d$-regular} if each vertex has degree $d$. Let $\mathcal{C}(n,d)$ be the set of all connected $d$-regular simple graphs of order $n$. In $1983$, McKay \cite{M1} determined precisely the asymptotic value of the maximum number of spanning trees of graphs in $\mathcal{C}(n,d)$, by proving that
$$\limsup_{n\rightarrow \infty}\{\tau(G)^{1/n}:G\in \mathcal{C}(n,d)\}=\frac{(d-1)^{d-1}}{(d^{2}-2d)^{d/2-1}},$$
and the best-known asymptotic formula  was established by Chung and Yau \cite{C2} in $1999$. By contrast, the situation is less understood for the analogous inferior limit. Let $c(d)=\liminf_{n\rightarrow \infty}\{\tau(G)^{1/n}:G\in \mathcal{C}(n,d)\}$. In $1990$, Alon \cite{A1} proved the following inequalities:
$$d-\Theta\left(d(\log\log d)^{2}/\log d\right)\leq c(d)\leq \left[(d+1)^{d-2}(d-1)\right]^{1/(d+1)},$$
where $d\geq3$ for the right inequality.  Alon also derived many other properties of $c(d)$ in his work, and  he proposed to determine the exact value of $c(d)$ for each $d\geq3$. Denote by $K_{n}$ the complete graph of order $n$. For even number $n$, let $K_n^{-}$  be the graph obtained from $K_n$ by deleting  an arbitrary perfect matching of $K_n$. In $1995$, Kostochka \cite{K1} improved the asymptotic range for $c(d)$ and generalized it to graphs with a given degree sequence, by subtly modifying Alon's approaches.  Furthermore,  Kostochka \cite{K1}  showed that $c(3)=2^{3/4}$ by proving that $\tau(G)\geq2^{3(n_3+2)/4}$ for a connected graph $G\neq K_4$ with vertex degrees  from $\{2,3\}$. In $2024$,  Sereni and  Yilma \cite{S1} determined that $c(4)=75^{1/5}$ by proving that   $\tau(G)\geq75^{n_4/5+n_3/10+1/5}$ for a connected graph $G \not\in\{K_5,K_6^{-}\}$ with vertex degrees  from $\{2,3,4\}$. To our best knowledge, the exact value of $c(d)$ is not yet determined for $d\geq 5$  as a difficult problem underlined by Alon \cite{A1}. However, the analogous questions for connected regular multigraphs have been completely solved in \cite{B2,P1} by characterizing the extremal multigraphs with the minimum number of spanning trees.

One  expects similar results to be true for $F(G)$ as $\tau(G)$, but it turns out that they are different in nature. It is known that $\tau(G)$ can be computed in polynomial time by the matrix-tree theorem \cite{K2}. However, computing $F(G)$ is $\#P$-hard \cite{J1}. Nevertheless one expects similar or at least analogous behaviors in certain problems even if they cannot be proven for spanning forests.

In $1996$,  Kahale and Schulman \cite{K3} established an upper bound on $F(G)$, that is,
\begin{equation}\label{x09}
F(G)^{1/n}\leq\frac{d+1}{\eta}\left(\frac{d-1}{d-\eta}\right)^{(d-2)/2}=d+\frac{1}{2}+O\left(\frac{1}{d}\right),
\end{equation}
where $G\in\mathcal{C}(n,d)$ and $\eta=\frac{(d+1)^{2}-(d+1)(d^{2}-2d+5)^{1/2}}{2(d-1)}$. Let $g(G)$ be the girth of $G$, that is, the length of the shortest cycle in $G$.  In $2021$, Borb\'enyi, Csikv\'ari and  Luo \cite{B3} studied the values of $f_d=\limsup_{n\rightarrow \infty}\{F(G)^{1/n}:G\in \mathcal{C}(n,d)\}$ and  determined that $f_3=2^{3/2}$  with a sequence $\{G_n\}_{n=1}^{\infty}$ of $3$-regular graphs satisfying $g(G_n)\rightarrow\infty$, at which the value of $f_3$ is realized. Moreover, they improved on the previous upper bounds in  \eqref{x09}  on $F(G)$ for $4\leq d\leq 9$. In $2022$,  Bencs and  Csikv\'ari \cite{B1} proved the following result: if $\{G_n\}_{n=1}^{\infty}$ is a sequence  of $d$-regular graphs with $d\geq 3$ and $g(G_n)\rightarrow\infty$, then
\begin{displaymath}
\lim_{n\rightarrow\infty}F(G_n)^{1/n}=\frac{(d-1)^{d-1}}{(d^{2}-2d-1)^{d/2-1}},
\end{displaymath}
which extended the case $d=3$ in \cite{B3} to the general case. In $2023$, Bencs and  Csikv\'ari \cite{B4} further showed that $F(G)\leq d^{n}$ for any graph $G\in \mathcal{C}(n,d)$, which  improved all the  aforementioned upper bounds on $F(G)$.

In this paper we mainly study the lower bounds on $F(G)$. Let
$\widehat{f}_{d}=\liminf_{n\rightarrow \infty}\{F(G)^{1/n}:G\in \mathcal{C}(n,d)\}$ for $d\geq 3$. Inspired by these aforementioned results,  we get the lower bounds for $F(G)$ in terms of the number of vertices of degree $i$ in $G$. Furthermore, we  determine the exact values of $\widehat{f}_{d}$ for $d=3$ and $4$, respectively.  Let $n_{i}(G)$ or $n_i$ for short, be the number of vertices of degree $i$ in $G$.  More precisely, we state our main results  as follows.

\begin{theorem}\label{x2}
 Let $G$ be a connected graph whose vertex degrees are from $\{2,3\}$, then
 $$F(G)\geq2^{n_2+n_3-1}3^{\frac{n_3+2}{4}},$$
 unless $G\cong K_{4}$.
 \end{theorem}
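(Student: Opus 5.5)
We prove the theorem by induction on $|E(G)|$ (equivalently on $n_3$, since $|E(G)|=n_2+\tfrac32 n_3$), in the spirit of Kostochka's argument for $\tau(G)$. Write $B(G)=2^{n_2+n_3-1}3^{(n_3+2)/4}$ for the claimed bound.

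\textbf{Base case $n_3=0$.} Here $G=C_n$ with $n\ge 3$. Then $F(C_n)=2^n-1$, and this is at least $\sqrt3\,2^{n-1}$ exactly when $2^n\ge 2/(2-\sqrt3)\approx 7.46$, which holds for all $n\ge3$. We also record $F(K_4)=38<2^{3}3^{3/2}$, which shows the exception is genuine. Small graphs with $n_3\le 4$ will be checked directly where the induction below needs them (for example $K_4$ minus an edge, and theta graphs).

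\textbf{Bridges.} Let $e=uv$ be a bridge, so $u,v$ both have degree $3$ (all degrees are at least $2$). Then $G-e=G_1\cup G_2$, and
$$F(G)=2F(G_1)F(G_2),$$
since a forest either contains the bridge or not, and both choices are always admissible. In $G_i$ the endpoint of $e$ has degree $2$, so $G_i\not\cong K_4$, and every degree lies in $\{2,3\}$. The exponents match exactly:
$$2\,B(G_1)\,B(G_2)=2^{n_2+n_3-1}\,3^{(n_3-2+4)/4}=B(G).$$
So the bridge case closes by induction.

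\textbf{Bridgeless case, $G\ne K_4$, $n_3\ge2$.} Pick an edge $e=uv$ with $\deg u=\deg v=3$ such that $G-e$ is connected and not $K_4$. If no such edge exists, $u$ and $v$ are joined by a path of degree-$2$ vertices; in that case we delete the whole suspended path. That reduction is handled with the same bookkeeping, using that a path of $k$ internal vertices contributes a factor close to $2^{k}$.

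In $G-e$ we have $n_3$ decreased by $2$ and $n_2$ increased by $2$. Hence $B(G)=\sqrt3\,B(G-e)$, and we need
$$F(G)=F(G-e)+F_e\ \ge\ \sqrt3\,B(G-e),$$
where $F_e$ is the number of spanning forests of $G-e$ in which $u$ and $v$ lie in different components. It therefore suffices to show
$$F_e\ge(\sqrt3-1)F(G-e),$$
that is, at least about $73\%$ of the forests of $G-e$ separate $u$ from $v$. A weaker inequality suffices if we also carry slack from induction.

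\textbf{Main obstacle.} The hard step is proving this separation bound. The plan uses that $u$ and $v$ have degree $2$ in $G-e$; call their neighbours $u_1,u_2$ and $v_1,v_2$.
\begin{itemize}
\item Condition on the restriction of a forest to $G-e-\{u,v\}$.
\item Given that restriction, $u$ can attach in $3$ or $4$ ways: no edge, either single edge, or both edges when $u_1,u_2$ lie in different components. The same holds for $v$.
\item Among these at most $16$ local configurations, connecting $u$ to $v$ requires $u$ and $v$ to each attach to components that are already joined. A case count on the joint pattern of $u_1,u_2,v_1,v_2$ then bounds the connecting configurations by roughly a quarter of the total.
\end{itemize}
If this local count falls short when $u$ and $v$ share neighbours (triangles or $4$-cycles through $e$), we handle those configurations separately by choosing $e$ differently or by reducing a triangle. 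A triangle gets contracted into a degree-$3$ vertex, and the bound is checked with explicit counts on the small gadget. I expect these short-cycle configurations, together with verifying the small base graphs, to be where most of the work lies.
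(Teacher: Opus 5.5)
The proposal has a genuine gap: its central inequality is false, and the local conditioning you plan cannot prove it.

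\textbf{The main step fails.} Your reduction needs, for a suitable edge $e=uv$ between degree-3 vertices, that $F(G/e)\ge(\sqrt3-1)F(G-e)$, i.e.\ $F(G)\ge\sqrt3\,F(G-e)$. This already fails for $G=K_{3,3}$, which is bridgeless, triangle-free and edge-transitive, so re-choosing $e$ does not help.
\begin{itemize}
\item The acyclic edge sets of $K_{3,3}-e$, counted by size, number $1,8,28,56,65,36$. So $F(K_{3,3}-e)=194$.
\item Since $F(K_{3,3})=328$, the ratio is $328/194\approx1.69<\sqrt3$.
\end{itemize}

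Your local count does not work either. Suppose the forest $T_0$ of $G-\{u,v\}$ puts $u_1,u_2,v_1,v_2$ into one component. Then the local graph of $G-e$ is two double edges and has $9$ forests. Of these, $4$ join $u$ to $v$, which is far from ``roughly a quarter'', and the ratio is $14/9$. The patterns $\{u_1,v_1\},\{u_2,v_2\}$ and $\{u_1,u_2,v_1\},\{v_2\}$ give $24/15$ and $20/12$. All three are below $\sqrt3$.

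These patterns arise whenever $T_0$ joins the four vertices by arbitrarily long paths. So excluding short cycles through $e$, or contracting triangles, does not remove them. The induction hypothesis $F(G-e)\ge B(G-e)$ gives no slack to carry.

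\textbf{The missing idea} is to apply induction not to $G-e$ but to the graph $G'$ obtained by also suppressing $u$ and $v$. That is, delete $u,v$ and add $u_1u_2$ and $v_1v_2$.
\begin{itemize}
\item Then $B(G)=4\sqrt3\,B(G')\approx6.93\,B(G')$.
\item The local ratios over the seven partition patterns become $32/4$, $24/2$, $28/4$, $18/1$, $24/3$, $20/2$, $14/1$, with minimum $7$.
\item In exactly the bad patterns, $G'$ has very few extensions (e.g.\ $1$ instead of $9$).
\end{itemize}

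This step needs $u_1u_2,v_1v_2\notin E(G)$, so you must first make $G$ cubic and triangle-free.
\begin{itemize}
\item Suppress a degree-2 vertex $v$ with non-adjacent neighbours $x,y$ by replacing $xvy$ with $xy$. The local ratio is $4/2$ or $3/1$, so at least $2=B(G)/B(G')$.
\item Contract triangles, gaining the factor $F(K_3)=7>4\sqrt3$.
\item Contract copies of $K_4-e$, gaining the factor $24=2^3\cdot3$, which is exact.
\item Check the small exceptions $R_1,R_2,K_4-e,K_{3,3}$ directly.
\end{itemize}

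\textbf{Secondary issues.} Your degree-2 step has the same defect as the main step. Deleting a suspended path with one internal vertex makes both endpoints degree 2, so $B$ drops by $2\sqrt3\approx3.46$. But the local ratio is only $3$ when the endpoints already lie in one component of $T_0$.

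Also, a bridge need not have both ends of degree $3$, since it may lie on a suspended path. Treat the whole path $v_1\cdots v_s$ of cut-edges instead. Then $F(G)=2^{s-1}F(G_1)F(G_2)$, which matches the bound exactly.
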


\begin{theorem}\label{xh-1}
 Let $G$ be a connected graph whose vertex degrees are from $\{2,3,4\}$, then
 $$F(G)\geq 2^{n_2+\frac{3n_3+n_4-9}{5}}198^{\frac{n_3+2n_4+2}{10}},$$
 unless $G\in \mathcal{E}=\{K_5,K_6^{-}\}$.
 \end{theorem}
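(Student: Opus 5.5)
We prove Theorem~\ref{xh-1} by induction on $|E(G)|$, following the strategy of Kostochka for $c(3)$ and of Sereni and Yilma for $c(4)$, adapted to forests. Write $\phi(G)=2^{n_2+\frac{3n_3+n_4-9}{5}}198^{\frac{n_3+2n_4+2}{10}}$. The basic tool is deletion--contraction, $F(G)=F(G-e)+F(G/e)$. For a graph $H$ with a vertex $v$ of degree $k$ we also use the cruder inequality $F(H)\ge 2^{k}F(H-v)$ when the neighbours of $v$ lie in distinct components after suitable deletions. More generally, for a tree-like attachment, a pendant structure multiplies the count by an explicit factor. The weights are tuned as follows. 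A degree-$2$ vertex contributes exactly a factor $2$, which is natural since subdividing an edge at most doubles $F$: $F(G)\ge 2F(G')$ when $G$ is obtained from $G'$ by subdividing an edge. Degree-$3$ and degree-$4$ vertices contribute $2^{3/5}198^{1/10}$ and $2^{1/5}198^{1/5}$ respectively.

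\textbf{Step 1 (degree-$2$ vertices).} If $G$ has a degree-$2$ vertex $v$ with neighbours $u,w$, we suppress $v$. If $uw\notin E(G)$, we obtain a graph $G'$ with the same $n_3,n_4$ and $n_2$ reduced by one, and $F(G)\ge 2F(G')$. This holds because each forest of $G'$ using $uw$ lifts to two forests of $G$ (one edge at $v$ plus $uw$ replaced, or the appropriate pair), and each forest not using $uw$ lifts to at least two (an isolated $v$ or a pendant $v$). If $G'$ is in $\mathcal E$, or if $uw\in E(G)$, we instead delete $v$. The degrees of $u$ and $w$ then drop, and we check the bound directly from $F(G)\ge 3F(G-v)$ or a refinement. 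The exceptional graphs obtained by subdividing an edge of $K_5$ or $K_6^-$ are handled by explicit computation.

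\textbf{Step 2 (degree-$4$ vertices).} When $\delta\ge 3$ and a degree-$4$ vertex exists, we delete an edge or a vertex. If $e=uv$ joins two degree-$4$ vertices, then $G-e$ has $n_4$ decreased by $2$ and $n_3$ increased by $2$. Then $\phi(G)/\phi(G-e)=2^{-4/5}198^{1/5}$, and it suffices to show $F(G)\ge 2^{-4/5}198^{1/5}F(G-e)\approx 1.66\,F(G-e)$. We obtain this from $F(G)=F(G-e)+F(G/e)$ together with a comparison $F(G/e)\ge cF(G-e)$. Local analysis of the neighbourhood of $e$ gives this comparison, for instance by showing that many forests of $G-e$ stay forests when $e$ is added, since $u$ and $v$ lie in different components. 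Connectivity of $G-e$ needs care. If $e$ is a bridge, then $F(G)=2F(G_1)F(G_2)$ and the induction applies to both sides, with the $-9/5$ and $+2/10$ constants absorbing the loss. Mixed edges (degree $4$ to degree $3$) and edges between degree-$3$ vertices are treated similarly. Between degree-$3$ vertices the ratio needed is $2^{6/5}198^{-2/10}\cdot 4\cdot\ldots$, which we compute from how the endpoints become degree-$2$.

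\textbf{Main obstacle.} The hardest part is the small, dense configurations where no reduction gains enough. These are graphs near $K_5$ and $K_6^-$, such as $K_5$ minus an edge, and $4$-regular graphs built from $K_5^-$ blocks. They are also where the constant $198$ presumably arises as the extremal value: $198=F$ of some small gadget, likely $K_5$ with a subdivided edge or $K_5$ minus an edge. We handle these by showing that if every reduction in Steps 1--2 fails, then $G$ contains a dense block (a $K_5$ minus an edge joined to the rest by two edges). For such blocks we compute $F$ exactly and verify the bound, possibly using bridges or $2$-edge cuts to decompose $G$ into two pieces and multiply the bounds. Finally, we check the base cases of small order directly, for example by computer, and confirm that $\mathcal E$ are the only exceptions.
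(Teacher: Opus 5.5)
Your bridge reduction and your suppression of a degree-$2$ vertex with non-adjacent neighbours are sound; they are the paper's Claims~\ref{xh-2} and~\ref{xh-4}. The other reductions, however, do not gain enough, and the part meant to repair this is only asserted. Write $q$ for your $\phi$.

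\textbf{Step 1.} Suppose the degree-$2$ vertex $v$ lies in a triangle $uvw$. Then $q(G)/q(G-v)=2\bigl(2^{-2/5}198^{1/10}\bigr)^2=396^{1/5}\approx 3.31$, whatever the degrees of $u,w$, because a drop $4\to3$ and a drop $3\to2$ cost the same factor. So $F(G)\ge 3F(G-v)$ never suffices. You must also count the forests containing both $vu$ and $vw$, which amounts to contracting the triangle: $F(K_3)=7$ against the needed $2^{6/5}198^{1/5}\approx 6.61$. That contraction is only available once no outside vertex has two neighbours in the triangle, i.e.\ after $K_4$'s and diamonds are excluded.

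\textbf{Step 2.} Every edge deletion, whatever the endpoint degrees, requires $F(G)\ge 2^{-4/5}198^{1/5}F(G-e)\approx 1.654\,F(G-e)$. Equivalently, at least about $65\%$ of the forests of $G-e$ must separate the endpoints of $e$. You give no argument for this, and it fails in dense configurations. The graph $K_5-e$ attains the bound ($F=q=198$). Deleting any of its edges gives either $Y_5$ or the wheel $K_5$ minus a $2$-matching, with $F=128$ and $F=134$, so the ratios are only $198/128\approx1.55$ and $198/134\approx1.48$.

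\textbf{Main-obstacle paragraph.} You concede this, but you merely assert that if all reductions fail then $G$ contains a $K_5-e$ block on a $2$-edge cut. Nothing supports that structural statement, and proving something like it is the real content of the theorem.

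\textbf{What is missing.} Two ingredients are needed.

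(i) A systematic elimination of dense substructures by contraction, comparing extension counts as in Lemma~\ref{x876}. The paper excludes $K_4$ (with several attachment cases), then $D_5$, then $D_4$, and finally all triangles in this way.

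(ii) A reduction at a degree-$4$ vertex $x$ with independent neighbourhood that gains the full factor $396^{1/5}$. A single complete lift gives only $\ell_2=3$. The key trick is to average over the three perfect matchings $M_1,M_2,M_3$ of $N(x)$. A type-by-type count of how forests of $G-x$ extend then yields $F(G)\ge\frac65\bigl(F(G_1)+F(G_2)+F(G_3)\bigr)$, an effective factor $18/5>3.31$.

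Once triangles are gone, every degree-$4$ vertex has independent neighbourhood, so (ii) applies. The subcubic case is then settled by Theorem~\ref{x2}. Without (i) and (ii), your induction has no justified step either on triangle-free graphs or inside dense blocks.

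\textbf{Minor slip.} Subdividing an edge at least doubles $F$, not at most.
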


By Proposition \ref{x1111}, we get a direct consequence  of Theorems \ref{x2} and \ref{xh-1} as follows.
\begin{corollary}\label{xx1}
Let $\widehat{f}_{d}$ be a number defined as above. Then $\widehat{f}_{3}=2\cdot 3^{1/4}$ and $\widehat{f}_{4}=2^{2/5}\cdot99^{1/5}$.
 \end{corollary}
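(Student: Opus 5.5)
We must prove Corollary: $\widehat f_3=2\cdot3^{1/4}$ and $\widehat f_4=2^{2/5}99^{1/5}$. It refers to Proposition \ref{x1111}, which is not shown; presumably it supplies an upper bound, i.e. a family of connected $d$-regular graphs attaining the rate. The lower bounds come from the theorems.

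The lower bound follows directly from Theorems \ref{x2} and \ref{xh-1}. For $G\in\mathcal{C}(n,3)$ with $G\not\cong K_4$ we have $n_3=n$ and $n_2=0$, so Theorem \ref{x2} gives
$$F(G)^{1/n}\ge 2^{(n-1)/n}3^{(n+2)/(4n)}\to 2\cdot 3^{1/4}.$$
Since $K_4$ occurs only at $n=4$, it does not affect the liminf, and hence $\widehat f_3\ge 2\cdot3^{1/4}$.

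For $d=4$, take $n_4=n$ and $n_2=n_3=0$. Theorem \ref{xh-1} gives
$$F(G)^{1/n}\ge 2^{(n-9)/(5n)}198^{(2n+2)/(10n)}\to 2^{1/5}198^{1/5}.$$
Because $198=2\cdot 99$, this limit equals $2^{2/5}99^{1/5}$. The exceptions $K_5$ and $K_6^-$ occur only for $n\le 6$ and are irrelevant to the liminf.

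For the upper bound, I expect Proposition \ref{x1111} to give $F$ multiplicative behaviour for a chain-like construction. In the cubic case, take copies of $K_4$ with one edge subdivided, so each block has 5 vertices and two degree-2 vertices, or more likely copies of $K_4$ minus an edge, and link them in a cycle by bridges or edges. For a cycle of blocks $B_1,\dots,B_k$ joined by single edges, $F(G)$ is roughly $\prod F(\text{block}) \cdot 2^{k}$ up to polynomial factors. This can be handled with a transfer-matrix argument: $F(G)\le \operatorname{tr}(M^k)$ for a small matrix $M$ encoding whether the connecting vertices lie in the same tree. Then $F(G)^{1/n}\to\lambda(M)^{1/|B|}$.

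The check is that, for the extremal blocks, this equals $2\cdot3^{1/4}$. In the cubic case the block is $K_4^-$ with 4 vertices; counting gives $F(K_4^-)=$ forests of a 5-edge graph on 4 vertices, and the value per vertex should come out to $(48)^{1/4}=2\cdot3^{1/4}$. In the quartic case the blocks should be $K_6^-$ minus an edge, or $K_5$ minus an edge, with 5 vertices, per-vertex rate $(32\cdot 99)^{1/5}$. The equality cases of the theorems (exponents $n_3/4$, and $n_3/10+n_4/5$) indicate exactly these blocks, since the $+2$ and $+1/5$ constants match such chains.

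The main obstacle is computing $F$ of the chain exactly, or bounding it above tightly, and verifying that the block constants match $48$ and $2^2\cdot 99^{1}\cdot\ldots$. I will trust Proposition \ref{x1111} for this: combining its upper bound with the lower bounds gives equality. The corollary then follows since the liminf is squeezed between the two.
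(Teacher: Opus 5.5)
Your proposal is correct and follows the paper's argument. The lower bounds come from Theorems~\ref{x2} and~\ref{xh-1} with $n_3=n$ (resp.\ $n_4=n$) and $n\to\infty$, exactly as you compute. The upper bounds are Proposition~\ref{x1111}, namely $\widehat{f}_d\le[2F(K_{d+1}-e)]^{1/(d+1)}$ with $F(K_4-e)=24$ and $F(K_5-e)=198$. This is realized by a cyclic chain of copies of $K_{d+1}-e$ joined by single edges, and the paper gives the exact formula $F(G_m)=2^mF(G-uv)^m-[F(G-uv)-F(G/uv)]^m$, so no transfer-matrix estimate is needed.

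Your speculative quartic constant is wrong, though this matters only if you do not defer to the proposition. The block is $K_5-e$, not $K_6^{-}$ minus an edge, and its per-vertex rate is $(2\cdot198)^{1/5}=(4\cdot 99)^{1/5}$, not $(32\cdot99)^{1/5}=2\cdot99^{1/5}$.
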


\section{Preliminaries}

\subsection{Notations and techniques}
For a vertex $v\in V(G)$, we denote by $N_{G}(v)$, or simply $N(v)$, the set of vertices adjacent to $v$ in a graph $G$.  The {\it degree} of  $v$ in $G$ is the number of edges  containing  $v$, denoted by $d_{G}(v)$, or simply $d(v)$.  For any $V'\subseteq V(G)$ and $E'\subseteq E(G)$, let $G[V']$ (resp.\ $G[E']$) be the subgraph of $G$ induced by $V'$ (resp.\ $E'$). We write $G-V'$ to mean $G[V(G)\setminus V']$, and abbreviate $G-\{v\}$ to $G-v$ for a vertex $v\in V(G)$. For any $E'\subseteq E(G)$, let $G-E'$ be the subgraph of $G$ obtained by deleting  all edges in $E'$, and abbreviate $G-\{e\}$ to $G-e$ for an edge $e\in E(G)$. For any positive integer $n$, we write the set $\{1, 2, \ldots, n\}$ as $[n]$.

For a graph $G=(V,E)$, let $T_G(x,y)$ denote its Tutte polynomial \cite{T2}, that is,
\begin{displaymath}
T_G(x,y)=\sum_{A\subseteq E}(x-1)^{k(A)-k(E)}(y-1)^{k(A)+|A|-|V|},
\end{displaymath}
where $k(A)$ denotes the number of connected components of the graph $(V,A)$.
The Tutte polynomial encodes a lot of quantitative information about the graph $G$, including $F(G)$ and $\tau(G)$. See some relevant results in \cite{D1,E1,H1,T3}.

\begin{lemma}[\hspace{1sp}{\cite{T2}}]\label{x133}
 For a multigraph $G$, we have $\tau(G) = T_G(1, 1)$ and $F(G)=T_G(2, 1)$.
 \end{lemma}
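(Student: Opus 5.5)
The plan is to evaluate the defining sum of $T_G(x,y)$ directly at the two points and see which subsets $A\subseteq E$ survive. Throughout I use the standard convention $0^0=1$. The one structural fact I need is that for every $A\subseteq E$ the exponent $k(A)+|A|-|V|$ (the nullity of the spanning subgraph $(V,A)$) is a nonnegative integer, with equality to zero exactly when $(V,A)$ is acyclic.

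First I establish that fact. Let $(V,A)$ have components with vertex sets of sizes $n_1,\dots,n_{k(A)}$. A connected multigraph on $n_i$ vertices has at least $n_i-1$ edges, with equality if and only if it is a tree, meaning it has no cycles, and in particular no loops and no pair of parallel edges. Summing over the components gives $|A|\ge |V|-k(A)$, with equality if and only if $(V,A)$ is a spanning forest. Similarly, $k(A)\ge k(E)$, since adding edges can only merge components. So both exponents in the Tutte sum are nonnegative integers. Hence $(x-1)^{k(A)-k(E)}(y-1)^{k(A)+|A|-|V|}$ is a genuine monomial, and substituting values is legitimate.

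Next I substitute $(x,y)=(2,1)$. The factor $(x-1)^{k(A)-k(E)}$ becomes $1$ for every $A$. The factor $(y-1)^{k(A)+|A|-|V|}=0^{k(A)+|A|-|V|}$ equals $1$ when the nullity is zero and $0$ otherwise. By the first step, the surviving subsets are exactly the edge sets of spanning forests of $G$, so $T_G(2,1)=F(G)$.

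Then I substitute $(x,y)=(1,1)$. Now a term survives only if both $k(A)=k(E)$ and $(V,A)$ is acyclic. For connected $G$ we have $k(E)=1$, so $A$ must be an acyclic connected spanning subgraph, that is, a spanning tree. Hence $T_G(1,1)=\tau(G)$. If $G$ is disconnected, $T_G(1,1)$ counts maximal spanning forests, so the identity is to be read under the usual convention; this is harmless here, since all graphs to which the paper applies it are connected.

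The only point needing care is the multigraph setting. I must check that loops and parallel edges are correctly recognized as cycles, so that they contribute positive nullity. This is exactly the equality case in the component edge count above, so I expect no real obstacle.
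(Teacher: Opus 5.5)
Your proof is correct. The paper gives no argument of its own for this lemma and simply cites Tutte; your evaluation of the subset expansion at $(2,1)$ and $(1,1)$, using that the nullity $k(A)+|A|-|V|$ is nonnegative with equality exactly when $(V,A)$ is acyclic, is the standard proof behind that citation.

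Your caveat is also right. As literally stated, $\tau(G)=T_G(1,1)$ requires $G$ to be connected, since for disconnected $G$ the evaluation counts maximal spanning forests while $\tau(G)=0$. This does not affect the paper, which only uses $F(G)=T_G(2,1)$ in its computations, and that identity holds for every multigraph.
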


For a subset $V'\subseteq V(G)$,  a {\it contraction} on $V'$ in a graph $G$ is to  identify all the  vertices in $V'$, and always deleting possible loops and retaining possible multiple edges, unless explicitly stated. The resulting graph is denoted   by  $G/V'$.   In particular, for $V'=\{u,v\}$ with  $uv\in E(G)$, we write  $G/\{u,v\}$  simply as $G/ uv$. The following  two lemmas are similar to the number of  spanning trees in  a graph.

\begin{lemma}[\hspace{1sp}{\cite{B3}}]\label{x1}
 For a graph $G$ with $e\in E(G)$, we have $F(G)=F(G-e)+F(G/e)$.
 \end{lemma}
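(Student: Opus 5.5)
The plan is to give a direct bijective proof by splitting the spanning forests of $G$ according to whether they use the edge $e$. We do not need the Tutte polynomial, although Lemma~\ref{x133} offers an alternative route. Write $e=uv$; since $G$ is loopless, $u\neq v$. Let $\mathcal{F}_0$ be the set of spanning forests of $G$ not containing $e$, and $\mathcal{F}_1$ the set of those containing $e$. Then $F(G)=|\mathcal{F}_0|+|\mathcal{F}_1|$.

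\textbf{Step 1: $|\mathcal{F}_0|=F(G-e)$.} A spanning subgraph of $G$ avoiding $e$ is exactly a spanning subgraph of $G-e$, and acyclicity does not depend on the ambient graph. So this case is immediate.

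\textbf{Step 2: $|\mathcal{F}_1|=F(G/e)$.} We identify each edge of $G$ other than $e$ with its image in $G/e$. Since $e$ joins distinct vertices, the only edge of $G$ that becomes a loop is $e$ itself. Any edge parallel to $e$ would also become a loop, but $G$ is simple, so there is none. Every other edge survives, possibly now as one of several parallel edges; such parallel pairs arise from $uw,vw$ with $w\in N(u)\cap N(v)$. This gives a bijection $E(G)\setminus\{e\}\to E(G/e)$. Define $\Phi(A)=A\setminus\{e\}$ for edge sets $A$ with $e\in A$. The key claim is:
\begin{quote}
$A$ is acyclic in $G$ if and only if $A\setminus\{e\}$ is acyclic in $G/e$.
\end{quote}
\emph{Forward direction.} Suppose $A\setminus\{e\}$ contains a cycle $C$ in $G/e$. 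This includes the case of a $2$-cycle formed by two parallel edges. If $C$ avoids the merged vertex, then $C$ is already a cycle in $G$. Otherwise $C$ lifts to a closed walk in $G$ through $u$ and/or $v$. If it passes through only one of them, it is already a cycle in $G$. If it enters at $u$ and leaves at $v$, say, then adding $e$ closes it into a cycle of $A$.

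\emph{Backward direction.} A cycle of $A$ that avoids $e$ maps to a closed trail in $G/e$ containing a cycle: it uses distinct edges and is not a single loop, because $e$ is the only edge that becomes a loop. A cycle of $A$ through $e$ becomes, after contracting $e$, a cycle of length one less. Its length is at least $3$ in simple $G$, so the image has length at least $2$ and is a genuine cycle, possibly a pair of parallel edges, in $G/e$.

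Hence $\Phi$ maps $\mathcal{F}_1$ bijectively onto the spanning forests of $G/e$. Its inverse is $B\mapsto B\cup\{e\}$, and spanningness is automatic since every vertex of $G$ is represented in $G/e$.

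Combining Steps 1 and 2 gives $F(G)=F(G-e)+F(G/e)$.

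The only point needing care is the treatment of multiple edges in Step~2. Retaining parallel edges in $G/e$, as the contraction convention of the paper stipulates, is exactly what makes the $2$-cycle case match the triangles $uvw$ through $e$. Deleting loops is harmless because only $e$ itself becomes one.
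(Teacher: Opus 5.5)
Your proof is correct. There is no proof in the paper to compare it with: the lemma is only quoted from \cite{B3}.

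Your argument is the standard self-contained deletion--contraction bijection, and the case analysis holds up. You split the forests by whether they contain $e$, use the edge bijection $E(G)\setminus\{e\}\to E(G/e)$, and show that a set containing $e$ is acyclic in $G$ exactly when its image is acyclic in $G/e$. You also correctly identify the $2$-cycle $\leftrightarrow$ triangle-through-$e$ correspondence as the reason parallel edges must be kept.

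Two other routes were available within the paper itself:
\begin{itemize}
\item \textbf{Via Lemma~\ref{x133}.} Evaluate the Tutte recursion $T_G=T_{G-e}+T_{G/e}$ at $(2,1)$. This needs a separate check when $e$ is a bridge: there $T_G=xT_{G/e}$ and $T_{G-e}=T_{G/e}$, so the identity still holds at $x=2$. It is shorter but rests on heavier machinery and hides the combinatorics.
\item \textbf{Via Lemma~\ref{x11}.} This is closer to the authors' own tools. Removing $e=uv$ maps the forests of $G$ containing $e$ bijectively onto the forests of $G-e$ in which $u$ and $v$ lie in different components. Lemma~\ref{x11} counts these as $F((G-e)/\{u,v\})=F(G/e)$. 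Your Step~2 is essentially a re-proof of the case $|V'|=2$ of that lemma.
\end{itemize}

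Two minor points. In the forward direction, a cycle through the merged vertex lifts to either a cycle of $G$ or a $u$--$v$ path, not a ``closed walk''; your subsequent case split handles this correctly anyway. Also, simplicity is only used for bookkeeping. Under the paper's convention of deleting loops, the identity also holds for multigraphs, because an edge parallel to $e$ can never lie in a forest that contains $e$.
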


\begin{lemma}\label{x111}
 Let $G$ be a graph and $v$ be a cut-vertex of $G$. Assume that  $G_{1},G_2,...,G_t$ are all connected  components of $G-v$. Then $F(G)=\prod_{i=1}^{t}F(G[V(G_i)\cup \{v\}]).$
 \end{lemma}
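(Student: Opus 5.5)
The plan is to establish a bijection between spanning forests of $G$ and tuples of spanning forests of the blocks $H_i=G[V(G_i)\cup\{v\}]$, $i\in[t]$. The edge sets $E(H_1),\dots,E(H_t)$ partition $E(G)$: every edge of $G$ either lies inside some $G_i$ or joins $v$ to some $G_i$, and no edge joins distinct components of $G-v$. So a spanning subgraph $A\subseteq E(G)$ corresponds uniquely to the tuple $(A_1,\dots,A_t)$ with $A_i=A\cap E(H_i)$. Each $A_i$ is a spanning subgraph of $H_i$, since the vertex sets of the $H_i$ cover $V(G)$ and pairwise meet only in $v$.

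The key step is to show that $A$ is acyclic if and only if every $A_i$ is acyclic. One direction is immediate, since a cycle in some $A_i$ is a cycle in $A$. For the converse, I will show that any cycle $C$ in $A$ lies entirely inside a single $H_i$. Suppose $C$ uses edges from two different blocks $H_i$ and $H_j$. Deleting $v$ from $C$ leaves either $C$ itself (if $v\notin C$) or a path. Either way this is a connected subgraph of $G-v$ that contains vertices of both $G_i$ and $G_j$; here we use that every edge of $H_i$ has at least one endpoint in $G_i$. That contradicts $G_i$ and $G_j$ being distinct components of $G-v$.

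Hence the restriction map is a bijection from the spanning forests of $G$ onto $\prod_i\{\text{spanning forests of }H_i\}$, and taking cardinalities gives $F(G)=\prod_{i=1}^t F(H_i)$. The argument works verbatim for multigraphs, where a cycle may have length 2. The only point needing care is the cycle-localization argument, which uses just that the $G_i$ are the components of $G-v$. The hypothesis that $v$ is a cut-vertex only guarantees $t\ge 2$; for $t=1$ the identity is trivial.
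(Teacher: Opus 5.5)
Your proof is correct and complete. The edge sets of the blocks $G[V(G_i)\cup\{v\}]$ partition $E(G)$, and every cycle of $G$ lies inside a single block, since deleting $v$ from the cycle leaves a connected subgraph of $G-v$; hence acyclic edge sets of $G$ are exactly unions of acyclic edge sets of the blocks. The paper states this lemma without proof, treating it as standard, and your restriction bijection is exactly the routine justification it relies on, in the same spirit as the bijection written out for Lemma~\ref{x11}.
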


\begin{lemma}\label{x11}
 Let $G$ be a graph and $V'\subseteq V(G)$, then the number of spanning forests   in $G$ where  all vertices in $V'$ are in different components  is equal to $F(G/V')$. Furthermore, $F(G/V')\leq F(G/V'')$ if $V''\subseteq V'\subseteq V(G)$.
 \end{lemma}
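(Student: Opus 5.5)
The first claim is a bijection argument; the monotonicity statement then follows by an injection between forest families.

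\emph{First claim.} Write $w$ for the vertex of $G/V'$ obtained by identifying $V'$. Since loops are deleted and multiple edges are kept, the edges of $G/V'$ are in natural bijection with the edges of $G$ that do not have both ends in $V'$. We therefore compare two families:
\begin{itemize}
\item spanning forests $A$ of $G$ in which the vertices of $V'$ lie in pairwise different components;
\item spanning forests $A'$ of $G/V'$.
\end{itemize}
\begin{enumerate}
\item Let $A$ be in the first family. Then $A$ contains no edge joining two vertices of $V'$, so every edge of $A$ survives in $G/V'$ and we may regard $A$ as an edge set of $G/V'$.
\item We check that $A$ is acyclic in $G/V'$. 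Suppose a cycle $C$ appears in $G/V'$. If $C$ avoids $w$, it is a cycle of $A$ in $G$, which is impossible. If $C$ passes through $w$, then lifting $C$ back to $G$ gives either a cycle through a single vertex of $V'$ (again impossible), or a path in $A$ between two distinct vertices of $V'$. The latter contradicts the assumption that these vertices lie in different components.
\item Conversely, let $A'$ be a spanning forest of $G/V'$, and lift it to an edge set $A$ of $G$. A cycle in $A$ would map to a closed walk in $G/V'$ containing a cycle, unless it becomes degenerate. That could only happen if the cycle contained an edge inside $V'$, and no such edge exists in $G/V'$. A path in $A$ joining two vertices of $V'$ would map to a cycle through $w$. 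Hence $A$ is a forest whose $V'$-vertices lie in different components.
\item The two maps are mutually inverse, so the families are equinumerous. This proves the first claim.
\end{enumerate}

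\emph{Monotonicity.} Let $V''\subseteq V'$. By the first claim, $F(G/V')$ counts forests of $G$ in which the vertices of $V'$ are pairwise separated. Every such forest also separates the vertices of $V''$, so it belongs to the family counted by $F(G/V'')$. This inclusion of families gives $F(G/V')\le F(G/V'')$.

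\emph{Degenerate cases.} If $|V'|\le 1$, then $G/V'=G$ and the condition is vacuous. If $V''=\emptyset$, the same remark applies, and the inequality reduces to $F(G/V')\le F(G)$.

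\emph{Main obstacle.} The only delicate point is handling loops and multiple edges in the acyclicity check. The key observation is that the deleted loops correspond exactly to the edges inside $V'$, and these can never lie in a forest that separates $V'$.
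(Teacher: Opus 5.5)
Your proposal is correct and follows the same route as the paper: contracting $V'$ gives a bijection between spanning forests of $G$ that separate $V'$ and spanning forests of $G/V'$, and monotonicity comes from the inclusion of the corresponding families of $V'$-separating and $V''$-separating forests. Your acyclicity checks in both directions are more careful than the paper's write-up. One small fix: take the path in $A$ joining two vertices of $V'$ to be minimal, i.e.\ with no interior vertex in $V'$, so that its image really is a cycle through $w$.
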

\begin{proof}
The second part is obvious.  Here we mainly prove the first part. Let $\mathcal{F}_{diff}(G,V')$ be the set of spanning forests   in $G$ where all vertices in $V'$  belong to different components, and let $\mathcal{F}(G/V')$ be the set of spanning forests   in  $G/V'$.   It suffices to prove that there is a bijection between  $\mathcal{F}_{diff}(G,V')$ and $\mathcal{F}(G/V')$.

We define a mapping $\phi: \mathcal{F}_{diff}(G,V')\rightarrow \mathcal{F}(G/V')$, that is,   by  contracting  $V'$ into a new vertex $v^*$,   transforming $T_0$ into a spanning subgraph $T_0'$ of $G/V'$ for any $T_0\in \mathcal{F}_{diff}(G,V')$. Then $T_0'\in\mathcal{F}(G/V')$   since  all vertices in $V'$ are in different components of $T_0$. Clearly, this mapping   is well-defined. Conversely, for any  $T_0'\in\mathcal{F}(G/V')$, we derive a  spanning forest $T_0\in\mathcal{F}_{diff}(G,V')$ by the following transformation:  expanding the vertex $v^{*}$ in $T_0'$ into the original vertex set $V'$ while adding no edges  in $V'$, and then replacing each edge  $e=v^{*}u$  in $T_0'$ with an edge $v_i u$   if $e=v^{*}u$  is  obtained from $v_i u$ after identifying  $V'$. For any $T_0\in \mathcal{F}_{diff}(G,V')$, there exists a unique $T_0'\in \mathcal{F}(G/V')$  corresponding to $T_0$.  Thus,  this mapping is also a surjection.

On the other hand, for any two distinct  spanning forests $F_1,F_{2}\in \mathcal{F}_{diff}(G,V')$, there  exist  two   distinct edges  $e_1\in E(F_1)\setminus E(F_2)$ and $e_2\in E(F_{2})\setminus E(F_1)$. Note that $e_1,e_2\not\in G[V']$.  By the definition of the contracting operation,   there  exist  two distinct edges corresponding to $e_1$ and $e_2$ in $\phi(F_1)$ and $\phi(F_2)$, respectively, regardless of whether $e_1$ and $e_2$  incident with the vertices in $V'$ or not. Then $\phi(F_1)\neq \phi(F_2)$.  Therefore,  $\phi$ is an injection.
 {\hfill}
\end{proof}

\begin{Remark}\label{xx234}
   Let $T_0$ be a forest  in $G$ with components $T_{1},T_{2},\ldots,T_{c}$ and $G_1$ be a subgraph of $ G-E(T_0)$ with $V(G)\setminus V(T_0)\subseteq V(G_1)$. Let $F_{G_1}(G,T_0)$ be the number of spanning forests of $G$ each of which is obtained by adding to $T_0$ possible edges  in $G_1$. Note that such a  spanning forest contains no cycle.  Then    $F_{G_1}(G,T_0)=F(G^{*})$ by Lemma \ref{x11}, where $G^*$ is obtained from  $G_1$ by contracting each vertex set  $V(T_i)\cap V(G_1)$  for $i\in [c]$ into a single vertex, respectively.
\end{Remark}

 Let $G$ and $G'$ be two  graphs with $V_0\subseteq V(G),E_0\subseteq E(G)$ and $V_0'\subseteq V(G'),E_0'\subseteq E(G')$.
Assume that they have a common subgraph $H=G-V_0-E_0=G'-V_0'-E_0'$. Let $E_1$ and $E_1'$ be the set of edges incident with $V_0$ and $V_0'$, respectively. Suppose that $G_1=G[E_0\cup E_1]$ and $G_1'=G'[E_0' \cup E_1']$.
 Combining  Lemma \ref{x11} with Remark \ref{xx234}, we  get a lower bound for the ratio $F(G)/F(G')$ by comparing the number of ways of extending any spanning forest of $H$ to a spanning forest of $G$ and $G'$ by using only edges in $G_1$ and $G_1'$, respectively.

\begin{lemma}\label{x876}
Keeping the above notations, we have
 $$\frac{F(G)}{F(G')}\geq \min_{T_0}\frac{F_{G_1}(G,T_0)}{F_{G_1'}(G',T_0)},$$
 where the minimum is taken over all spanning forests $T_0$ of $H$.
 \end{lemma}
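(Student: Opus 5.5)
The plan is to partition the spanning forests of $G$ according to their restriction to $H$, and then compare the two graphs class by class. Every spanning forest $T$ of $G$ meets $H$ in a spanning forest $T_0=T\cap H$ of $H$: its edge set lies in $E(H)$, it contains all of $V(H)$, and it is acyclic. The remaining edges of $T$ lie in $E(G)\setminus E(H)$. Because $H=G-V_0-E_0$, every such edge is either in $E_0$ or incident with $V_0$, so it belongs to $E_0\cup E_1=E(G_1)$. We therefore get the disjoint decomposition
$$F(G)=\sum_{T_0}F_{G_1}(G,T_0),$$
where the sum runs over all spanning forests $T_0$ of $H$. The same argument gives $F(G')=\sum_{T_0}F_{G_1'}(G',T_0)$ over the \emph{same} index set, since $H$ is common to both graphs.

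To make this rigorous, I will check that $T\mapsto (T\cap H,\;T\setminus E(H))$ is a bijection. It should map spanning forests of $G$ onto pairs $(T_0,S)$ with $T_0$ a spanning forest of $H$, $S\subseteq E(G_1)$, and $T_0\cup S$ acyclic. The vertices of $V_0$ are covered automatically, since we count spanning subgraphs on $V(G)$. For fixed $T_0$, the number of admissible $S$ is exactly $F_{G_1}(G,T_0)$ as defined in Remark \ref{xx234}. Since $V(G)\setminus V(T_0)=V_0\subseteq V(G_1)$, the remark applies. The only point needing care is that the vertices of $V_0$ are isolated in $T_0$ and are treated as trivial components. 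This is harmless, because contracting a singleton changes nothing.

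The conclusion then follows from the elementary mediant inequality. Each term $F_{G_1'}(G',T_0)$ is at least $1$, since the empty extension is always allowed. Setting $m=\min_{T_0}F_{G_1}(G,T_0)/F_{G_1'}(G',T_0)$, we have $F_{G_1}(G,T_0)\ge m\,F_{G_1'}(G',T_0)$ for every $T_0$. Summing over $T_0$ gives $F(G)\ge m\,F(G')$.

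The main obstacle is bookkeeping rather than ideas. One must verify that no edge of $G$ outside $H$ escapes $E(G_1)$, which is immediate from the definition $H=G-V_0-E_0$. One must also confirm that the indexing set is literally identical for $G$ and $G'$. This requires both graphs to contain $H$ as the same labelled subgraph, which is exactly the hypothesis.
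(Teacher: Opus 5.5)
Your proposal is correct and follows the paper's proof. You write $F(G)=\sum_{T_0}F_{G_1}(G,T_0)$ and $F(G')=\sum_{T_0}F_{G_1'}(G',T_0)$ over the common index set of spanning forests $T_0$ of $H$, justified by $E(G)\setminus E(H)=E_0\cup E_1$, and then bound term by term using the minimum ratio; your explicit bijection check and the remark that each denominator is at least $1$ (because the empty extension is allowed) make explicit what the paper leaves implicit.
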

\begin{proof}
Let $\mathcal{F}(H)$ be the set of spanning forests of $H$. Since every
edge of $G-E(H)$ lies in $G_1$, and every edge of $G'-E(H)$ lies in $G'_1$, every
spanning forest of $G$ or $G'$ is obtained uniquely by starting with a
spanning forest $T_0$ of $H$ and adding some possible edges from $G_1$ or
$G'_1$, respectively. Hence,
\[
F(G)=\sum_{T_0\in \mathcal{F}(H)}F_{G_1}(G,T_0)
\quad\text{and}\quad
F(G')=\sum_{T_0\in \mathcal{F}(H)}F_{G'_1}(G',T_0).
\]
Therefore,
\begin{displaymath}
\begin{split}
F(G)
&=\sum_{T_0\in \mathcal{F}(H)}F_{G_1}(G,T_0)\\
&\ge
\left(
\min_{T_0\in \mathcal{F}(H)}
\frac{F_{G_1}(G,T_0)}{F_{G'_1}(G',T_0)}
\right)
\sum_{T_0\in \mathcal{F}(H)}F_{G'_1}(G',T_0)\\
&=\left(
\min_{T_0\in \mathcal{F}(H)}
\frac{F_{G_1}(G,T_0)}{F_{G'_1}(G',T_0)}
\right)F(G'),
\end{split}
\end{displaymath}
which implies the result.
{\hfill}
\end{proof}

In \cite {O1}, Ok and Thomassen  introduced a ``lifting" operation  of multigraphs to study the minimum number of spanning trees in multigraphs. Let $f_1=xy_1$ and $f_2=xy_2$ with $y_1\neq y_2$ be two adjacent edges  in a  multigraph $G$. {\it Lifting} $f_1$ and $f_2$ means deleting  $f_1$ and $f_2$ and adding a new edge between $y_1$ and $y_2$. If $x$ is a vertex of degree $2m$ in $G$, a {\it complete lift} of $x$ is the process of first performing a sequence of $m$ lifts of pairs of edges incident with $x$ and then deleting the vertex $x$ (which is, by then, isolated), thereby producing a multigraph  $G_x$. Ok and Thomassen \cite{O1} found a link between the numbers of spanning trees in $G$ and $G_x$, and Pek\'arek, Sereni and  Yilma  refined this result, see \cite{P1} for details.

\begin{theorem}[\hspace{1sp}{\cite{O1,P1}}]\label{x311}
 Let $G$ be a multigraph with a vertex $x$ of degree $2m$. Let $G_x$ be a multigraph obtained from $G$ by a complete lift of $x$. Then
 \begin{displaymath}
\tau(G)\geq c_{m}\tau(G_x),
\end{displaymath}
with
\begin{displaymath}
c_m=\min_{d_1,d_2,...,d_k}\min_{X}\frac{\prod_{i=1}^{k}d_i}{\tau(X)}
\end{displaymath}
where the minimum is taken over all sequences of positive integers $d_1,d_2,\ldots, d_k$ with varying length $k$ such that $\sum_{i=1}^{k}d_i=2m$, and over all connected multigraphs $X$ of order $k$
with degree sequence $d_1,d_2,\ldots, d_k$.
 \end{theorem}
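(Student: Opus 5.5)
The plan is to argue as in Lemma~\ref{x876}, but for spanning trees instead of spanning forests, and to compare $G$ and $G_x$ through their common part $H=G-x$. Let the complete lift of $x$ pair the $2m$ edges at $x$ into $m$ pairs $\{xy_{2j-1},xy_{2j}\}$. Then $G_x$ is $H$ together with the $m$ new edges $y_{2j-1}y_{2j}$. The aim is to show
\[
\frac{\tau(G)}{\tau(G_x)}\;\ge\;\min_{T_0}\frac{\#\{\text{extensions of }T_0\text{ in }G\}}{\#\{\text{extensions of }T_0\text{ in }G_x\}},
\]
where $T_0$ runs over spanning forests of $H$, and then to bound each ratio below by $c_m$.

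\textbf{Decomposition.} Every spanning tree of $G$ restricts to a unique spanning forest $T_0$ of $H$, and every spanning tree of $G_x$ restricts to a unique spanning forest of $H$ (its edges outside the new ones). So $\tau(G)=\sum_{T_0}a(T_0)$ and $\tau(G_x)=\sum_{T_0}b(T_0)$, where $a(T_0)$ and $b(T_0)$ count the extensions of $T_0$. It therefore suffices to prove $a(T_0)\ge c_m\, b(T_0)$ for every $T_0$, exactly as in the proof of Lemma~\ref{x876}.

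\textbf{Counting extensions.} Fix $T_0$ with components $C_1,\dots,C_k$. For each $i$ let $d_i$ be the number of edges of $G$ (with multiplicity) between $x$ and $C_i$.

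\begin{itemize}
\item If some $d_i=0$, then $C_i$ receives no edge from $x$ in $G$. It also receives no new edge in $G_x$, since every new edge has both ends among the neighbours of $x$. Hence $a(T_0)=b(T_0)=0$ and the inequality is trivial.
\item Otherwise all $d_i\ge1$ and $\sum_i d_i=2m$. An extension in $G$ must pick exactly one edge from $x$ to each $C_i$, so $a(T_0)=\prod_{i=1}^k d_i$.
\item An extension in $G_x$ corresponds, as in Remark~\ref{xx234}, to a spanning tree of the multigraph $X$ obtained by contracting each $C_i$ to a vertex and keeping the $m$ new edges. So $b(T_0)=\tau(X)$.
\end{itemize}

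If we keep the contracted new edges as loops, then $X$ has degree sequence exactly $d_1,\dots,d_k$, because each edge from $x$ to $C_i$ becomes one endpoint of a new edge at $C_i$. If $X$ is disconnected, $b(T_0)=0$ and there is nothing to prove. If $X$ is connected and loopless, then $\prod d_i/\tau(X)\ge c_m$ directly by the definition of $c_m$.

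\textbf{Main obstacle: loops.} A new edge $y_{2j-1}y_{2j}$ with both ends in the same $C_i$ becomes a loop in $X$. Such a loop does not affect $\tau(X)$, but $X$ is then not an admissible loopless multigraph in the definition of $c_m$. To remove a loop at vertex $i$, I will replace it by a new vertex $z$ of degree $2$ joined to $i$ by a double edge. This has three effects:
\begin{itemize}
\item the degree of $i$ stays $d_i$;
\item the degree sum stays $2m$, since the new vertex has degree $2$ while the loop contributed $2$ before;
\item the spanning tree count is multiplied by exactly $2$, because one of the two parallel edges must be chosen to reach $z$.
\end{itemize}
Correspondingly, the product of degrees also gains the factor $d_z=2$. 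So the ratio $\prod d_i/\tau(X)$ is unchanged. Iterating over all loops gives a connected loopless multigraph $X'$ with degree sum $2m$ and the same ratio, so the ratio is at least $c_m$. Summing over $T_0$ then gives $\tau(G)\ge c_m\tau(G_x)$.

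I expect the loop issue to be the only delicate point. The remaining steps are the same bookkeeping as in Lemmas~\ref{x11} and~\ref{x876}.
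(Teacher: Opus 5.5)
Up to the loop step your argument is sound. It follows the same route the paper takes for the forest analogue, Theorem \ref{x322}: you decompose over spanning forests $T_0$ of $G-x$ as in Lemma \ref{x876}, obtain $a(T_0)=\prod_i d_i$ and $b(T_0)=\tau(X)$, and observe that $X$, with loops kept, has degree sequence exactly $d_1,\dots,d_k$.

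\textbf{The gap is the loop replacement.} Replacing a loop at $i$ by a new vertex $z$ joined to $i$ by two parallel edges turns one edge into two, so the degree sum becomes $2m+2$, not $2m$. Vertex $i$ keeps degree $d_i$, because the double edge supplies the $2$ the loop supplied, and $z$ adds a further $2$. Your claims ``the degree of $i$ stays $d_i$'' and ``the degree sum stays $2m$'' cannot both hold.

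After removing $\ell$ loops, $X'$ is a competitor in the definition of $c_{m+\ell}$, not $c_m$. So you only obtain $\prod_i d_i/\tau(X)\ge c_{m+\ell}$. Concluding $\ge c_m$ would need monotonicity of $c_m$ in $m$, which you have not proved. For example, when $T_0$ is a spanning tree of $G-x$, $X$ is a single vertex carrying $m$ loops. Your replacement turns it into a vertex of degree $2m$ with $m$ pendant double edges, whose degree sum is $4m$; this is a competitor for $c_{2m}$.

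\textbf{Repair.} Replace the loop at $i$ by a single edge $iz$ to a new vertex $z$ of degree $1$.
\begin{itemize}
\item The loop forces $d_i\ge 2$, so the new degree $d_i-1$ is still positive.
\item The degree sum stays $2m$.
\item $\tau$ is unchanged, because $iz$ lies in every spanning tree.
\item The product of degrees is multiplied by $(d_i-1)/d_i<1$, so the ratio does not increase.
\end{itemize}
Iterating over all loops gives a connected loopless $X'$ admissible for $c_m$, with $\prod_i d_i/\tau(X)\ge \prod_j d'_j/\tau(X')\ge c_m$.

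This is the pendant-vertex step in the paper's proof of Theorem \ref{x322}. There, loops are deleted on contraction, leaving a vertex with $d_H(v_i)\le d_i-2$. The deficiency is then removed by attaching a pendant vertex to $v_i$ and lowering $d_i$ by one.
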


Given a multigraph $G$ and any two vertices $i,j\in V(G) $, let $\omega_{G}(i,j)$  be the number of edges of $G$ between $i$ and $j$.  Pek\'arek, Sereni and  Yilma \cite{P1} determined when we can indeed perform a complete lift of the vertex $x$ with the resulting connected multigraph $G_x$.

\begin{lemma}[\hspace{1sp}{\cite{P1}}]\label{x354}
 Let $G$ be a connected multigraph with a vertex $x$ of degree $2m$. There exists  a complete lift of $x$ yielding a connected multigraph $G_x$  if and only if $\omega_G(x,y)\leq m$ for every vertex $y$ of $G$ and $G-x$ has at most $m+1$ components.
 \end{lemma}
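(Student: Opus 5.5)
The plan is to establish the characterization in both directions. The only two ways a complete lift can fail to produce a connected multigraph are a vertex joined to $x$ by too many edges, and $G-x$ having too many components. The statement claims these are the only obstructions.

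\emph{Necessity.} Suppose $\omega_G(x,y)\ge m+1$ for some $y$. Each lift consumes two edges at $x$ with distinct far endpoints, so it uses at most one $xy$-edge. There are only $m$ lifts, so at least one $xy$-edge is never consumed and a complete lift cannot be performed. Now suppose $G-x$ has components $C_1,\dots,C_t$ with $t\ge m+2$. After the complete lift, $G_x$ is $G-x$ together with $m$ new edges. Adding $m$ edges to a graph with $t$ components leaves at least $t-m\ge 2$ components, so $G_x$ is disconnected.

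\emph{Sufficiency.} I would argue constructively by induction on $m$. The case $m=0$ is trivial. For $m\ge1$, let $C_1,\dots,C_t$ be the components of $G-x$ and let $a_i\ge1$ be the number of edges from $x$ into $C_i$, so $\sum a_i=2m$.

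The aim is to choose a pair of edges $xy_1,xy_2$ with $y_1\ne y_2$ and lift it so that the resulting multigraph $G'$ satisfies the hypotheses for $m-1$. These hypotheses are:
\begin{itemize}
\item $G'$ is connected;
\item $\omega_{G'}(x,y)\le m-1$ for every $y$;
\item $G'-x$ has at most $m$ components.
\end{itemize}

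\emph{Multiplicity condition.} At most two vertices can have multiplicity exactly $m$, since their edges sum to at most $2m$. If two such vertices $y,y'$ exist, they carry all the edges at $x$, and we lift one $xy$ edge with one $xy'$ edge. If exactly one such vertex $y$ exists, we must take $y_1=y$.

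\emph{Component condition.} If $t\ge2$, choose $y_1,y_2$ in different components, preferring $y_1=y$ when such a $y$ exists. Since $y$ lies in only one component and $t\ge2$, a partner in another component exists. This lift merges two components, giving at most $t-1\le m$ components.

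If $t=1$, any lift keeps $G'-x$ connected. The remaining concern is connectivity of $G'$ itself, i.e.\ that $x$ still has an edge when $m-1\ge1$; this holds because $x$ keeps degree $2m-2$.

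It remains to check that $t\le m+1$ is preserved, which the merging guarantees. The components are also weakly fine when $t\le m$ and no merge happens, but with $t\ge2$ we always merge anyway.

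\emph{Main obstacle.} The step I expect to need the most care is the multiplicity bookkeeping. A lift lowers $m$ by one, so any vertex with multiplicity exactly $m$ must be involved in the lift. We must also ensure that $y_2\ne y_1$ exists with $y_2$ in a different component; this uses the fact that the edges at $y$ number at most $m<2m$.

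Finally, connectivity of the final $G_x$ follows once $m$ reaches $0$. At that point $G-x$ has at most one component and $x$ is isolated and then deleted.
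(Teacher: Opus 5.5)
The paper does not prove this lemma; it quotes it from \cite{P1}. So I am judging your argument on its own. Your necessity direction is correct. The sufficiency induction, however, has a genuine gap.

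\textbf{Where it breaks.} You list ``$G'$ is connected'' as an invariant, but you only verify it when $t=1$, using $\deg x=2m-2\ge 1$. For $t\ge 2$ you check only the component count. Connectivity of $G'$ means every component of $G'-x$ still sends an edge to $x$. Merging $C_i$ and $C_j$ leaves $a_i+a_j-2$ edges from $C_i\cup C_j$ to $x$, which is $0$ when $a_i=a_j=1$, and your rule (``any $y_1,y_2$ in different components'') allows exactly this.

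Here is a concrete failure. Take $m=2$ and let $G-x$ have three components $C_1,C_2,C_3$. Let $x$ send one edge into each of $C_1$ and $C_2$, and one edge to each of two distinct vertices $z_1,z_2\in C_3$. All hypotheses hold: $t=3=m+1$ and all multiplicities are $1$. Your rule permits lifting the $C_1$-edge with the $C_2$-edge. The only remaining lift then adds $z_1z_2$ inside $C_3$, so $G_x$ has two components, even though lifting $c_1z_1$ and $c_2z_2$ would have worked. Your claim ``a partner in another component exists'' also fails at the second step here. It silently presupposes that every component of the current $G-x$ contains a neighbour of $x$, which is precisely the connectivity you did not maintain.

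\textbf{How to fix it.} For $m\ge 2$ and $t\ge 2$, some component has $a_i\ge 2$, since all $a_i=1$ would force $t=2m>m+1$. Take $y_1$ in such a component and $y_2$ in another one. Then the merged component keeps $a_i+a_j-2\ge 1$ edges to $x$, and the untouched components keep theirs, so $G'$ is connected.

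This choice is compatible with your multiplicity constraint. If $\omega_G(x,y)=m$, the component containing $y$ has $a_i\ge m\ge 2$, so taking $y_1=y$ still works. If two such vertices exist, then $t\le 2$, and they lie in different components when $t=2$.

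\textbf{Base case.} You should also tidy the start of the induction. The hypothesis ``$G'$ connected'' cannot hold at $m-1=0$, because $x$ becomes isolated. Take $m=1$ as the base case instead: $x$ has two distinct neighbours, there are at most two components, each contains one of those neighbours, and the single lift joins them.
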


Using the  methods in \cite{O1} and \cite{P1}, we can extend Theorem \ref{x311} to the number of spanning forests in a graph in the following.

\begin{theorem}\label{x322}
 Let $G$ be a multigraph with a vertex $x$ of degree $2m$. Let $G_x$ be a multigraph obtained from $G$ by a complete lift of $x$. Then
 \begin{displaymath}
F(G)\geq \ell_{m}F(G_x),
\end{displaymath}
with
\begin{displaymath}
\ell_m=\min_{d_1,d_2,...,d_k}\min_{X}\frac{\prod_{i=1}^{k}(d_i+1)}{F(X)},
\end{displaymath}
where the minimum is taken over all sequences of positive integers $d_1,d_2,\ldots, d_k$ with varying length $k$ such that $\sum_{i=1}^{k}d_i=2m$, and over all connected multigraphs $X$ of order $k$ with degree sequence
 $d_1,d_2,\ldots, d_k$.
 \end{theorem}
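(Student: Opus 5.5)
We follow the Ok--Thomassen strategy behind Theorem \ref{x311}, replacing spanning trees by spanning forests. Let $N=\{y_1,\dots,y_k\}$ be the distinct neighbours of $x$, with $d_i=\omega_G(x,y_i)$, so $\sum_{i=1}^k d_i=2m$. Write $H=G-x$, and let $L$ be the multigraph on $N$ formed by the $m$ lifted edges, so that $G_x=H\cup L$. Apply Lemma \ref{x876} with common subgraph $H$, taking $G_1$ to be the star at $x$ (the edges from $x$ to $N$) and $G_1'=L$. It then suffices to show, for every spanning forest $T_0$ of $H$,
\[
F_{G_1}(G,T_0)\ \ge\ \ell_m\,F_{L}(G_x,T_0).
\]

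Fix $T_0$ and group $y_1,\dots,y_k$ by the component of $T_0$ containing them, giving classes $S_1,\dots,S_r$. Let $D_j=\sum_{y_i\in S_j}d_i$. By Remark \ref{xx234}, $F_{G_1}(G,T_0)$ is the number of spanning forests of the star obtained by contracting each $S_j$; that is, $x$ joined to $r$ vertices with multiplicities $D_1,\dots,D_r$. A forest here chooses for each $j$ either no edge or one of the $D_j$ parallel edges, so
\[
F_{G_1}(G,T_0)=\prod_{j=1}^{r}(D_j+1).
\]
On the other side, $F_L(G_x,T_0)=F(L^*)$, where $L^*$ is the multigraph on vertex set $\{S_1,\dots,S_r\}$ obtained from $L$ by contracting each class $S_j$ and deleting loops (Remark \ref{xx234}). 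In $L^*$ the degree of $S_j$ is at most $D_j$, with equality exactly when no loop was deleted.

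The key reduction is to pass to a connected multigraph whose degree sequence sums to $2m$, so that the definition of $\ell_m$ applies. First, decompose $L^*$ into its connected components $X_1,\dots,X_s$. Then $F(L^*)=\prod_t F(X_t)$, and $\prod_j(D_j+1)$ factors over the components in the same way. Second, handle the deleted loops: each loop removed from $L^*$ lowers the degree of a vertex while leaving $F$ unchanged. To obtain the right degrees I would instead keep these incidences by re-inserting the loops. A loop does not change $F$, and the minimum in $\ell_m$ ranges over multigraphs; I would check that allowing loops in $X$ does not lower the minimum, and if it does, replace each loop by a pendant pair to reach the same conclusion. Third, a component $X_t$ with degree sum $2m_t<2m$ must be compared with $\ell_m$. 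For this I would use monotonicity, $\ell_{m'}\ge\ell_m$ for $m'\le m$, together with $\ell_m\le1$ (by taking $X$ to be a single edge of multiplicity $m$, say). Then the product over components satisfies $\prod_t\ell_{m_t}\ge\ell_m$, provided supermultiplicativity $\ell_a\ell_b\ge\ell_{a+b}$ holds.

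That supermultiplicativity comes from the disjoint union: if $X$ and $X'$ are connected and attain $\ell_a$ and $\ell_b$, join them by lifting one edge from each into two crossing edges. This preserves the degrees and keeps the graph connected, and I would argue it does not decrease $F$ relative to $F(X)F(X')$, giving $\ell_{a+b}\le\ell_a\ell_b$. Since $F$ is not generally monotone under such edge swaps, this is where care is needed. I expect this step to be the main obstacle: handling disconnected $L^*$ and deleted loops so that the bound uses exactly the single constant $\ell_m$. If the swap argument fails, I would fall back on defining the minimum over not necessarily connected $X$ and then show the extremum is attained by a connected one. Once the pointwise inequality holds for all $T_0$, Lemma \ref{x876} yields $F(G)\ge\ell_mF(G_x)$.
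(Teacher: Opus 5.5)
\textbf{Verdict: genuine gap.} Your setup matches the paper's: Lemma~\ref{x876} with common subgraph $G-x$, the count $\prod_j(D_j+1)$ for the contracted star, and $F(L^*)$ with $\deg_{L^*}(S_j)\le D_j$. The gap is the reduction from $(D,L^*)$ to the class over which $\ell_m$ is minimized, and two of the tools you propose for it are false.

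The bound $\ell_m\le 1$ fails. Your own $m$-fold edge gives $(m+1)^2/(m+1)=m+1$. In general $F(X)\le\prod_i(d_i+1)$: root each component of a forest at its least vertex and record the parent edge of every other vertex. So $\ell_m\ge 1$.

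The monotonicity $\ell_{m'}\ge\ell_m$ for $m'\le m$ also fails, since $\ell_1=2<\ell_2=3<\ell_3=27/7$. Hence degree lost to deleted loops cannot be absorbed by passing to a smaller index. It must be turned back into genuine degree summing to $2m$, and your treatment of the loops is only conditional.

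Finally, the $2$-switch does not establish $\ell_{a+b}\le\ell_a\ell_b$. The inequality $F\ge F(X)F(X')$ does hold: split the forests of $(X-uv)\cup(X'-u'v')$ according to whether $u\sim v$ and whether $u'\sim v'$; the switched graph admits $4,4,4,3$ extensions in the four cases, against $4,2,2,1$ before. But the switched graph is disconnected whenever both switched edges are bridges, which is forced for $a=b=1$.

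The missing idea is to modify $L^*$ directly, as the paper does, by moves that never increase $\prod_j(D_j+1)/F$.

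(i) If $L^*$ is disconnected, identify one vertex from each component into a single vertex whose value is the sum of their $D_j$. Then $F$ is unchanged by Lemma~\ref{x111}, while $1+\sum D_j<\prod(D_j+1)$. This gives connectivity outright, so no supermultiplicativity is needed; the same gluing also gives $\ell_{a+b}<\ell_a\ell_b$ if you want it.

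(ii) If two distinct vertices have degree below their $D$-value, join them by an edge; $F$ strictly increases.

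(iii) If only one vertex $v$ is deficient, its deficit is even, hence at least $2$. Attach a new pendant vertex to $v$ and split $D_v$ into $D_v-1$ and $1$. Then $F$ doubles while the numerator is multiplied by $2D_v/(D_v+1)<2$. This is the correct form of your ``pendant pair'': a loop, worth $2$ in the degree of $v$, becomes one pendant edge, and the degree sum stays $2m$.

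Iterating (i)--(iii) ends at a connected multigraph whose degree sequence is exactly $(D_j)$, summing to $2m$. So every ratio is at least $\ell_m$, and Lemma~\ref{x876} concludes.
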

\begin{proof}
Let $G_0=G-x$ and $N=\{x_i:xx_i\in E(G)~\mathrm{and}~i\in[2m]\}$ be the multi-set of all  neighbours of $x$ in $G$.  Assume that we can perform  a complete lift of the vertex $x$ on $G$.
Let $G_1$ be a subgraph induced by the edge set $\{xx_i:i\in[2m]\}$.  Note that  $E(G_x)-E(G_0)$ is the set of edges created by lifting $x$, which induces a subgraph $G_2$.  Observe that  $G_0=G-x=G_x-E(G_2)$. Now we consider any spanning forest $T_0$ of $G_0$ that can be extended to a spanning forest of $G_x$ by adding (possibly $0$) edges in $E(G_2)$. We shall compute $F_{G_1}(G,T_0)$ and $F_{G_2}(G_x,T_0)$.

Suppose that $T_1$, $T_2$,\ldots,$T_k$ with orders $|T_i\cap N|=d_i\geq1$  are all components of $T_0$ which contain the neighbours of $x$, where   $\sum_{i=1}^{k}d_i=2m$.  By Remark \ref{xx234}, $F_{G_1}(G,T_0)=F(S^{*})=\prod_{i=1}^{k}(d_i+1)$, where $S^*$ is a  multi-star at $x$ with edge multiplicities $d_1$, $d_2$,\ldots,$d_k$. Likewise, by Remark \ref{xx234}, we have $F_{G_2}(G_x,T_0)=F(H)$, where $H$ is the graph obtained from $G_2$ by contracting each vertex set $V(G_2)\cap T_i$ for $i\in [k]$ into  a single vertex. Let $v_i$ be the vertex of $H$ corresponding to $V(G_2)\cap T_i$. Then $d_{H}(v_i)\leq d_i$  since each vertex $x_j\in T_i$ provides $v_i$ with at most one edge from $E(G_2)$. Moreover, by Lemma \ref{x876} and considering all possibilities for $T_0$, we get
\begin{equation*}
\frac{F(G)}{F(G_x)}\geq\min_{T_0}\frac{F_{G_1}(G,T_0)}{F_{G_2}(G_x,T_0)}=\min_{d_1,d_2,...,d_k}\min_{H}\frac{\prod_{i=1}^{k}(d_i+1)}{F(H)}\triangleq a_{m}.
\end{equation*}

We consider the sequence  $D=\{d_1,d_2,\ldots, d_k\}$ of positive integers with length $k$.  Assume that $H$ has degree sequence $D'=\{d_1',d_2',\ldots,d_k'\}$, where $d_i'=d_{H}(v_i)\leq d_i$ for $i\in[k]$. By the definition of  $\ell_m$, it suffices to prove that   $a_{m}$   takes the minimum value only if  $H$ is  connected and   the sequence $D'=D$, i.e., $H$ is isomorphic to $X$ and $a_m=\ell_m$.

Suppose that  $a_{m}$   takes the minimum value and $H$ is not connected. Let  $H_1,H_2,\ldots, H_c$  be all components with $c\geq2$ in $H$. Without loss of generality, assume that  $v_i\in H_i$  for $i\in [c]$.   We construct a new graph $H'$ obtained from $H$ by identifying the vertex set $\{v_1,v_2,\ldots,v_c\}$ into a new vertex $v^*$ of degree $\sum_{i=1}^{c}d_i'$. Then $H'$ has $k-c+1$ vertices with degree sequence  $\sum_{i=1}^{c}d_i',d_{c+1}',d_{c+2}',\ldots,d_k'$. Consider a sequence $D''=\{\sum_{i=1}^{c}d_i,d_{c+1},d_{c+2},\ldots,d_k\}$. Then
$$\left(1+\sum_{i=1}^{c}d_i\right)\cdot\prod_{i=c+1}^{k}(d_i+1)< \prod_{i=1}^{k}(d_i+1)$$
as $d_i\geq1$ for each $i\in[k]$. Moreover, $F(H')=F(H)$. Hence,  $a_{m}$  takes the smaller value on a connected graph $H'$ along with the sequence $D''$, a contradiction. Hence, we may assume that $H$ is connected, which also implies that $d_i'\geq1$.

 Now our goal is to show that if $a_{m}$   takes the minimum value, then $D'=D$. First, if there are at least two vertices  $v_i\neq v_j$ such that $d_i'< d_i$ and $d_j'< d_j$, we can add a new edge between $v_i$ and $v_j$ in $H$ to form a new connected graph $H''$. Obviously, $F(H'')> F(H)$. Hence, $H''$ along with the sequence $D$ implies that $a_m$ cannot attain the minimum value. We thus assume that there exists a unique $v_i$ for $i\in[k]$ such that $1\leq d_i'<d_i$. It follows that $d_i'\leq d_i-2$ since both $\sum_{i=1}^{k}d_i$ and $\sum_{i=1}^{k}d_i'$ are even.

 Consider a sequence $D'''=\{s_j\}_{j=1}^{k+1}$ defined by
 \begin{displaymath}
  s_j= \begin{cases}
d_j &\mbox{if  $j\neq i$},\\
d_i-1 &\mbox{if $j=i$},\\
1 &\mbox{if $j=k+1$},\\
 \end{cases}
\end{displaymath}
which satisfies that $\sum_{j=1}^{k+1}s_j=\sum_{j=1}^{k}d_j=2m$. Note that
$$\prod_{j=1}^{k+1}(s_j+1)=2\left(1-\frac{1}{d_i+1}\right)\cdot\prod_{j=1}^{k}(d_j+1) <2 \prod_{j=1}^{k}(d_j+1).$$
Let $H'''$ be a connected graph obtained from $H$ by adding a new vertex of degree $1$ joined to $v_i$. Then $H'''$ has degree sequence  $s_1',s_2',\ldots, s_{k+1}'$, where $s_j'=d_j'$ if $j\neq i$, while $s_i'=d_i'+1$ and $s_{k+1}'=1$. Observe that $s_j'\leq s_j$ for each $j\in [k+1]$. Moreover, $F(H''')=2F(H)$. Therefore, $H'''$ along with the sequence $D'''$ implies that $a_m$ cannot attain the minimum value, which concludes the proof.
{\hfill}
\end{proof}

In  subsequent work we restrict our attention to simple graphs.  The  lifting operation is applied only to pairs of edges  $f_1=xy_1$ and $f_2=xy_2$ in a graph $G$ with $y_1y_2\not\in E(G)$, thereby producing a simple graph  $G_x$. Note that Theorem \ref{x322}  also holds for simple graphs.    With our definition, it is possible to perform a complete lift at $x$ of degree $2m$  only if there exists a matching of size $m$ in the complement of  $G[N_G(x)]$. In the above definition of $\ell_m$, the connected multigraph $X$ has exactly $m$ edges, so $\ell_1=2$. Furthermore, $\ell_2=3$ and $\ell_3=\frac{27}{7}$, which are attained by a  $2$-cycle and $3$-cycle, respectively.

\subsection{Upper bound on $\widehat{f}_{d}$}

Recall that $\widehat{f}_{d}=\liminf_{n\rightarrow \infty}\{F(G)^{1/n}:G\in \mathcal{C}(n,d)\}$ for $d\geq 3$.  Now we  show that the following fact about $\widehat{f}_d$: for any fixed $d$-regular connected graph $G$ of order $r$ and every non-cut edge $uv$ of $G$,
\begin{equation}\label{2.1}
\widehat{f}_d\leq[2F(G-uv)]^{\frac{1}{r}}.
\end{equation}
To see this, we consider an arbitrary  $d$-regular connected graph $G$ on $r$ vertices, where  $uv$ is a non-cut edge of $G$ (there is always such an edge as $G$ contains a cycle). For each integer $m\geq 2$, let $H_0,H_1,\ldots, H_{m-1}$ be $m$ copies of $G-uv$ and let $u_i$ and $v_i$ be the vertices of $H_i$ corresponding to $u$ and $v$, respectively, where $ 0\leq i<m$. Let $G_m$ be the graph obtained from the disjoint union of $H_0,H_1,\ldots, H_{m-1}$ by adding to it the edges $v_iu_{i+1}$ ($ 0\leq i<m$), where the indices are reduced mod $m$. Obviously $G_m$ is a $d$-regular connected graph on $mr$ vertices. By  a routine computation, we have
\begin{displaymath}
F(G_m)=2^{m}[F(G-uv)]^{m}-[F(G-uv)-F(G/uv)]^{m},
\end{displaymath}
where $F(G-uv)-F(G/uv)$ means the number of spanning forests in $G-uv$ such that $u$ and $v$ are in a same component, by using Lemma \ref{x11}.
Note that $\lim_{n\rightarrow \infty}(a^{n}-b^{n})^{1/n}=a$ if $a>b>0$ and $ F(G)>F(G-uv)>F(G-uv)-F(G/uv)$.
Then $\lim_{m\rightarrow \infty}F(G_m)^{1/mr}=[2F(G-uv)]^{1/r}$. Moreover, by the definition of  $\widehat{f}_d$, we have
 $$\widehat{f}_d\leq \lim_{m\rightarrow \infty}F(G_m)^{\frac{1}{mr}} =[2F(G-uv)]^{\frac{1}{r}},$$
 and \eqref{2.1} follows.

 In this paper  we compute the number of spanning forests in some small graphs, which can be obtained by computing the evaluations of $T_G(2,1)$ in the Tutte polynomial via  version $9.3$ of the SageMath software system \cite{S2}. We have included a link to SageMath code used to verify these calculations in the appendix.  Based on the above observation, we get the following statement.
\begin{proposition}\label{x1111}
For each $d\geq 3$, $\widehat{f}_d\leq[2F(K_{d+1}-e)]^{1/(d+1)}$. In particular, $\widehat{f}_3\leq 2\cdot3^{1/4}$ and $\widehat{f}_4\leq 2^{2/5}\cdot99^{1/5}$.
 \end{proposition}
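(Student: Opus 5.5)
The plan is to apply inequality \eqref{2.1} with $G=K_{d+1}$, which is a connected $d$-regular graph on $r=d+1$ vertices. Since $d\geq 3$, every edge $uv$ of $K_{d+1}$ lies on a triangle and is therefore a non-cut edge. Inequality \eqref{2.1} then gives $\widehat{f}_d\leq [2F(K_{d+1}-e)]^{1/(d+1)}$ at once. All edges of $K_{d+1}$ are equivalent under automorphisms, so the choice of $e$ does not matter.

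For the particular values we need $F(K_4-e)$ and $F(K_5-e)$. For $K_4-e$ I would use Lemma \ref{x1} directly. Take the diagonal edge $f$ of $K_4-e$, which joins the two degree-$3$ vertices. Then $K_4-e-f=C_4$, and $F(C_4)=2^4-1=15$. The contraction $(K_4-e)/f$ is a multigraph on three vertices in which the merged vertex has a double edge to each of the other two vertices. Its spanning forests with at most two edges are counted as $1+4+\binom{4}{2}=11$. The two-edge subsets that form a cycle are the two pairs of parallel edges, so they must be removed, giving $F=11-2=9$. Altogether $F(K_4-e)=15+9=24$, and hence $2F(K_4-e)=48=2^4\cdot 3$. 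Taking the fourth root gives $2\cdot 3^{1/4}$.

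For $K_5-e$ the target is $2F(K_5-e)=2^2\cdot 99$, that is $F(K_5-e)=198$. This is consistent with the constant $198$ appearing in Theorem \ref{xh-1}. I would obtain it by the same deletion–contraction: $F(K_5-e)=F(K_5)-F(K_5/e)$ by Lemma \ref{x1} applied to $K_5$. Here $F(K_5)=291$ is a standard count, obtained by summing over forest structures on $5$ labelled vertices. Then $F(K_5/e)$ must be computed, where $K_5/e$ is $K_4$ with the three edges at one vertex doubled. Alternatively, and more simply, I would cite the SageMath evaluation $T_{K_5-e}(2,1)=198$ as the paper does. The only step needing care is this small count, and it can be checked by computer. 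Everything else follows immediately from \eqref{2.1}.
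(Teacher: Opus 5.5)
Your proposal is correct and follows the paper's proof. Both apply \eqref{2.1} with $G=K_{d+1}$, where every edge is non-cut, and then evaluate $F(K_4-e)=24$ and $F(K_5-e)=198$, which the paper obtains from SageMath. Your hand deletion--contraction count for $K_4-e$ is right. The value you left open, $F(K_5/e)=93$, follows by symmetry: $K_5$ has $1,10,45,110,125$ spanning forests with $0,1,2,3,4$ edges, so each of its $10$ edges lies in $930/10=93$ of them, giving $F(K_5-e)=291-93=198$.
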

\begin{proof}
Taking $G$ in \eqref{2.1} as $K_{d+1}$,   we have
$F(K_4-e)=24$ and $F(K_5-e)=198$ by employing SageMath, the result follows.
{\hfill}
\end{proof}

\section{Proof of Theorem \ref{x2}}
In the present section we prove Theorem \ref{x2}. Before that, we need to do some preparatory work.  Let $K_{a,b}$ be a   complete bipartite graph with  bipartition $X\cup Y$, where $|X|=a$ and $|Y|=b$. Denote by $R_1$ the $3$-prism, which is the Cartesian product of a $3$-cycle $C_3$ and $K_2$. Let $R_2$ be a graph obtained from $K_4$ by subdividing an edge of $K_4$. For any graph $G$, let
$$p(G)=2^{n_2(G)+n_3(G)-1}3^{\frac{n_3(G)+2}{4}}.$$
 Now we compute $F(G)$ and $p(G)$ for several graphs $G$ by using SageMath as follows.

\begin{lemma}\label{x73}
$F(K_3)=7>2^{2}3^{1/2}=p(K_3)$, $F(K_4)=38<2^{3}3^{3/2}=p(K_4)$, $F(K_4-e)=24=p(K_4-e)$ for any $e\in E(K_4)$, $F(K_{3,3})=328>288=p(K_{3,3})$, $F(R_{1})=314>288=p(R_1)$ and $F(R_{2})=86>2^{4}3^{3/2}=p(R_2)$.
 \end{lemma}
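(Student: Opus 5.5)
This is a finite computation, so I will verify each of the six claims separately. For each graph $G$ in the statement I evaluate two quantities. The first is $p(G)$, taken directly from $n_2(G)$ and $n_3(G)$. The second is $F(G)$, which is computed either as $T_G(2,1)$ via Lemma \ref{x133} or by hand using the deletion–contraction identity of Lemma \ref{x1} together with the bijection of Lemma \ref{x11}. I then compare the two numbers.

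\textbf{The values of $p(G)$.} These are immediate from the degree counts.
\begin{itemize}
\item $K_3$ has $n_2=3$, $n_3=0$, so $p=2^2 3^{1/2}$.
\item $K_4$ has $n_3=4$, so $p=2^3 3^{3/2}$.
\item $K_4-e$ has $n_2=2$, $n_3=2$, so $p=2^3\cdot 3=24$.
\item $K_{3,3}$ and $R_1$ each have $n_3=6$, so $p=2^5\cdot 3^2=288$.
\item $R_2$ has $n_2=1$, $n_3=4$, so $p=2^4 3^{3/2}$.
\end{itemize}

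\textbf{The values of $F(G)$.}
\begin{itemize}
\item For $K_3$, every edge subset except the whole triangle is a forest, so $F=2^3-1=7$.
\item For $K_4$, count forests by number of edges: $1+6+15+16=38$. The $16$ counts the spanning trees. The $15$ counts the $2$-edge subsets, which are all acyclic. The $3$-edge forests are exactly the spanning trees, since the only other $3$-edge subsets are the four triangles.
\item For $K_4-e$, there are $2^5=32$ edge subsets. Remove those containing a cycle: the two triangles, each together with either choice for its remaining edge ($4$ subsets), plus the $4$-cycle ($1$ subset). Thus $F=32-5=27$.
\end{itemize}

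This contradicts the stated value $24$. Re-checking carefully: the two triangles share an edge. The subsets containing triangle $1$ number $2^{5-3}=4$, and likewise for triangle $2$. Their intersection is the full edge set, counted once. The $4$-cycle is also contained in the full set. Hence the cyclic subsets are the $4+4-1=7$ subsets containing a triangle, plus the $4$-cycle alone, giving $8$. So $F=32-8=24$, which matches $p(K_4-e)=24$ and also agrees with the value used in Proposition \ref{x1111}.

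\textbf{The remaining graphs.} For $K_{3,3}$, $R_1$ and $R_2$ I would not enumerate subsets directly. Instead, apply Lemma \ref{x1} to one edge $e$, writing $F(G)=F(G-e)+F(G/e)$. The resulting graphs are small multigraphs, which I reduce further using three tools:
\begin{itemize}
\item Lemma \ref{x111} at cut-vertices;
\item the rule that a pendant edge contributes a factor of $2$;
\item the rule that a bundle of $k$ parallel edges behaves like a single edge of weight $k$, so in the recursion the contraction term is multiplied by $k$.
\end{itemize}
The targets are $328$, $314$ and $86$. Equivalently, since the paper already uses SageMath for exactly this purpose, I would simply evaluate $T_G(2,1)$ for each graph.

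\textbf{Comparisons.} The final step is to check the inequalities: $7>2\sqrt{12}\approx6.93$, $38<8\cdot 5.196\approx41.57$, $328>288$, $314>288$, and $86>16\cdot5.196\approx83.14$.

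The only real obstacle is bookkeeping in the hand recursions for the $6$- and $5$-vertex graphs. Delegating these to the computer algebra evaluation of the Tutte polynomial removes that risk.
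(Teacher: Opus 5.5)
Your proposal is correct and is essentially the paper's argument: the paper simply evaluates $F(G)=T_G(2,1)$ by computer and compares with $p(G)$, and your six values of $p(G)$ and your hand counts for $K_3$, $K_4$ and $K_4-e$ all check out. In a final write-up, delete the abandoned count of $27$ for $K_4-e$ (counting by number of edges gives $1+5+10+8=24$ directly), and note that the same edge-by-edge count also finishes the other graphs by hand: $1+9+36+84+117+81=328$ for $K_{3,3}$, $1+9+36+82+111+75=314$ for $R_1$, and $24+2\cdot 24+14=86$ for $R_2$ after splitting on the two edges at the subdivision vertex.
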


 Let $\mathcal{W}$ be the set of all graphs except $K_4$ in Lemma \ref{x73}. By Lemma \ref{x73}, Theorem \ref{x2} is true for any graph $G\in \mathcal{W}$. For each $n\geq 3$, let $\mathcal{A}_{n}$ be the class of all connected graphs of order $n$, whose vertex degrees are from $\{2,3\}$. Suppose that $G\in \mathcal{A}_{n}\setminus \{K_4\}$ is a  graph with the minimum number of edges  such that $F(G)<p(G)$. Then $G\not\in \mathcal{W}$. Note that  $|E(G)|\geq4$ since $G\neq K_3$. Our aim is to derive a contradiction, which will establish this theorem. Next we prove  some claims in the following.

\begin{claim}\label{x-2}
$G$ has no cut-edge.
\end{claim}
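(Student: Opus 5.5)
I will argue by contradiction. Suppose $G$ has a cut-edge $e$. The plan is to split $G$ along a path of cut-edges into two smaller graphs of the class, apply the minimality of $G$ to each piece, and check that the exponents in $p$ combine exactly to $p(G)$, contradicting $F(G)<p(G)$.

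\emph{Normalising the cut-edge.} Since $G$ is connected with minimum degree at least $2$, each component of $G-e$ contains a cycle, and hence a vertex of degree $3$ in $G$ (it cannot be a cycle, because it contains an endpoint of $e$ whose degree drops). Starting from $e$, I extend in both directions through vertices of degree $2$. Let
$$P=x\,u_1u_2\cdots u_k\,y$$
be the maximal such path, where $k\ge 0$, the internal vertices $u_1,\dots,u_k$ have degree $2$, and the ends $x,y$ have degree $3$. Every edge of $P$ is then a cut-edge. Removing the internal vertices (and, if $k=0$, the edge $xy$) leaves two connected components $G_1\ni x$ and $G_2\ni y$.

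\emph{Counting spanning forests.} Each of the $k+1$ edges of $P$ is a cut-edge, so it can be included or not freely. Repeatedly applying Lemma~\ref{x1} to these edges together with Lemma~\ref{x111}, or equivalently observing that cut-edges can be chosen independently, I get
$$F(G)=2^{k+1}F(G_1)F(G_2).$$
In $G_i$ the end vertex $x$ (resp.\ $y$) now has degree $2$, and all other degrees are unchanged. So $G_1,G_2$ are connected, have all degrees in $\{2,3\}$, and have strictly fewer edges than $G$. Neither is $K_4$, since each contains a vertex of degree $2$, and each has at least $3$ vertices because $G$ is simple. By the minimality of $G$ (and Lemma~\ref{x73} for members of $\mathcal{W}$), $F(G_i)\ge p(G_i)$.

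\emph{Exponent bookkeeping.} We have
$$n_2(G_1)+n_2(G_2)=n_2(G)-k+2,\qquad n_3(G_1)+n_3(G_2)=n_3(G)-2.$$
Hence
$$p(G_1)p(G_2)=2^{n_2(G)-k+2+n_3(G)-2-2}\,3^{\frac{n_3(G)-2+4}{4}}=2^{n_2(G)+n_3(G)-k-2}\,3^{\frac{n_3(G)+2}{4}}.$$
Multiplying by $2^{k+1}$ gives
$$F(G)\ge 2^{k+1}p(G_1)p(G_2)=2^{n_2(G)+n_3(G)-1}3^{\frac{n_3(G)+2}{4}}=p(G),$$
which is the desired contradiction.

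The main obstacle is the case where an endpoint of $e$ has degree $2$. Splitting directly at $e$ would then create a pendant vertex outside the class. The path-extension step above handles this, and I would take care to justify that both ends $x,y$ genuinely reach vertices of degree $3$.
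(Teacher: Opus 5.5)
Your proposal is correct and takes the same approach as the paper. Both arguments take the maximal path of degree-$2$ vertices through the cut-edge, ending at degree-$3$ vertices, split $G$ into $G_1$ and $G_2$ (each has a degree-$2$ vertex, hence is not $K_4$), and combine $F(G)=2^{k+1}F(G_1)F(G_2)$ with $p(G_1)p(G_2)=2^{-(k+1)}p(G)$; your $k+1$ is the paper's $s-1$, and your check that the ends are distinct degree-$3$ vertices whose degrees drop to $2$ fills in details the paper leaves implicit.
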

\begin{proof}[\em\textbf {Proof of Claim \ref{x-2}}]
 Suppose on the contrary that $e$ is a cut-edge of $G$. There exists a path $P=v_{1}v_{2}\cdots v_s$ of length $s-1$ in $G$ that contains $e$, where $d(v_1)=d(v_s)=3$, $d(v_2)=d(v_3)=\cdots=d(v_{s-1})=2$ and  all the edges of $P$ are cut-edges. Deleting the interior vertices of $P$ forms two connected components $G_1$ and $G_2$. Both $G_1$ and $G_2$  have a vertex of degree $2$ (namely, $v_1$ and $v_s$), and   hence each of them belongs to some $ \mathcal{A}_{k}\setminus \{K_4\}$ where $k\geq 3$. Since $n_{2}(G_1)+n_{2}(G_2)=n_{2}(G)-s+4$  and $n_3(G_1)+n_3(G_2)=n_3(G)-2$, we have $p(G_1)p(G_2)=2^{-(s-1)}p(G)$.  By the minimality of $G$, we have
$$F(G)=2^{s-1}F(G_1)F(G_2)\geq 2^{s-1}p(G_1)p(G_2)=p(G).$$
This contradiction concludes Claim \ref{x-2}.
{\hfill}
\end{proof}

\begin{claim}\label{x-3}
$G$ is $3$-regular.
\end{claim}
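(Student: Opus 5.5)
We argue by contradiction. Assume $G$ has a vertex $v$ of degree $2$, with neighbours $u$ and $w$. We will reduce $G$ to a smaller graph in $\mathcal{A}_k\setminus\{K_4\}$ and invoke the minimality of $G$ (minimum number of edges with $F(G)<p(G)$). Claim \ref{x-2} tells us $G$ has no cut-edge, so $G$ is $2$-edge-connected. Moreover, $G$ is not a cycle: a cycle $C_n$ with $n\ge 3$ has $F(C_n)=2^n-1\ge 2^{n-1}\cdot 3^{1/2}=p(C_n)$, since $2^n-1\geq 2^{n-1}\sqrt3$ holds for $n\ge 3$. Hence there are degree-$3$ vertices, and we may take a maximal path $P=v_1v_2\cdots v_s$ with $s\ge 3$, $d(v_1)=d(v_s)=3$, and all interior vertices of degree $2$. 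Write $t=s-2\ge 1$ for the number of interior vertices.

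\textbf{Main case: $v_1v_s\notin E(G)$.} Let $G'$ be obtained by suppressing the path, i.e.\ replacing $P$ by the single edge $v_1v_s$. Then $G'$ is simple and connected, its degrees lie in $\{2,3\}$, and it has fewer edges. We have $n_2(G')=n_2(G)-t$, $n_3(G')=n_3(G)$, and therefore $p(G)=2^t p(G')$. Applying Lemma \ref{x1} to the edge $v_1v_s$, write $F(G')=A+B$, where $A=F(G'-v_1v_s)$ and $B=F(G'/v_1v_s)$. By Lemma \ref{x11}, $B$ counts the forests of $H=G'-v_1v_s$ in which $v_1$ and $v_s$ lie in different components. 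Let $C=A-B$; this counts the forests of $H$ in which $v_1$ and $v_s$ lie in the same component.

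Count the forests of $G$ by extending a forest of $H$ along $P$, which has $t+1$ edges. If $v_1,v_s$ are in different components, every subset of $P$ except the full path is allowed, giving $2^{t+1}-1$ choices. If they are in the same component, the path must omit at least one edge, again giving $2^{t+1}-1$ choices. So $F(G)=(2^{t+1}-1)A$, and
$$F(G)=(2^{t+1}-1)A\ \ge\ 2^{t}(A+B)=2^tF(G'),$$
because $A\ge B$ and $2^{t+1}-1\ge 2^t+2^t\cdot\frac{B}{A}$ when $B\le A$; in fact it suffices that $(2^t-1)A\ge 2^tB$, which needs $B/A\le 1-2^{-t}$. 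That last inequality is where care is needed, and the crude bound $A\geq B$ gives $(2^{t+1}-1)A\ge (2^t-\tfrac12)(A+B)$, which is slightly too weak.

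A cleaner alternative is to contract one edge of $P$ instead of suppressing the whole path: take $v_1v_2$ with $t\ge 1$ and set $G''=G/v_1v_2$. This is simple, since $v_1v_3\notin E$ or else $t=1$ and we are in the special case handled below. We have $p(G'')=p(G)/2$, and $F(G)=F(G-v_1v_2)+F(G/v_1v_2)$. Here $v_2$ becomes a pendant vertex in $G-v_1v_2$, so $F(G-v_1v_2)=2F(G-v_2)$, while $F(G/v_1v_2)\ge F(G-v_2)$ by Lemma \ref{x11} applied to forests not using $v_1v_2$. I expect this estimate to be the main obstacle: it requires comparing $F(G-v_2)$ with $F(G'')$ and showing $F(G)\ge 2F(G'')$. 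If that fails, I will fall back to the exact identity $F(G)=(2^{t+1}-1)A$ and bound $B/A$ using $2$-edge-connectivity of $H$.

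\textbf{Special cases.} If $v_1v_s\in E(G)$, the suppressed graph would have a multiple edge. Instead we treat $v_1,v_s$ with their third neighbours $x,y$. If $x=y$, the edge at $x$ is a cut-edge, contradicting Claim \ref{x-2}, unless $G$ is small (such as $R_2$ or $K_4-e$); small graphs are excluded since they lie in $\mathcal{W}$. If $x\ne y$, delete $P$'s interior vertices together with $v_1,v_s$ and join $x$ to $y$, then compare via Lemma \ref{x876} with an explicit finite computation of the extension ratios. Whenever the reduced graph is $K_4$, we check directly against the graphs in Lemma \ref{x73}.
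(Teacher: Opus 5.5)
Your proposal has a genuine gap: the main case is never proved, and the obstruction you hit comes from a miscount. When $v_1$ and $v_s$ lie in different components of a forest of $H$, adding all $t+1$ edges of $P$ only joins those two components, so all $2^{t+1}$ subsets of $E(P)$ are admissible. The full path must be excluded only in the same-component case. Hence $F(G)=2^{t+1}B+(2^{t+1}-1)(A-B)=(2^{t+1}-1)A+B$, not $(2^{t+1}-1)A$. For $R_2$ the correct formula gives $3\cdot 24+14=86$, as in Lemma~\ref{x73}, while yours gives $72$. With the correct count, $F(G)-2^tF(G')=(2^t-1)(A-B)\ge 0$ immediately.

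Your fallback, deriving $B/A\le 1-2^{-t}$ from $2$-edge-connectivity of $H$, targets an inequality that is false in general: for $H=K_4-e$ with $v_1,v_s$ the nonadjacent pair, $B/A=7/12>1/2$.

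In your ``cleaner alternative'', the bound you wrote, $F(G/v_1v_2)\ge F(G-v_2)$, is true but not what is needed. You need $F(G'')\le 2F(G-v_2)$. This holds because $G''-v_1v_3=G-v_2$, so Lemmas~\ref{x1} and~\ref{x11} give $F(G'')=F(G-v_2)+F(G''/v_1v_3)\le 2F(G-v_2)$. Then $F(G)=2F(G-v_2)+F(G'')\ge 2F(G'')$. That alternative is exactly the paper's Case~1: there $G''=(G-v)\cup\{xy\}$, and the factor $2$ is quoted from Theorem~\ref{x322} with $\ell_1=2$.

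Reducing one degree-$2$ vertex at a time has a further advantage: $G''\cong K_4$ forces $G=R_2\in\mathcal{W}$. By contrast, suppressing a whole path with $t\ge 2$ can reach $K_4$ from a multiply subdivided $K_4$. Such a graph is not covered by Lemma~\ref{x73} and needs its own check.

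The special case $v_1v_s\in E(G)$ is only sketched, and the sketched reduction is not always available. Only $t=1$ matters there, since for $t\ge 2$ you can reduce at $v_2$, whose neighbours are nonadjacent. Deleting $v_1,v_2,v_3$ and joining $x$ to $y$ creates a multiple edge if $xy\in E(G)$, which you do not rule out. When $xy\notin E(G)$, the extension ratios you left uncomputed are $14$ and $24$, against the required $p(G)/p(G')=8\sqrt3\approx 13.86$. So that route works, but with almost no slack, and $G'\cong K_4$ would again need a separate check.

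The paper instead contracts the triangle $\{v_1,v_2,v_3\}$ into a single vertex of degree $2$ adjacent to $x\ne y$. The result is simple, cannot be $K_4$, and satisfies $p(G')=2^{-2}3^{-1/2}p(G)$. It also gives $F(G)\ge F(K_3)F(G')=7F(G')$ with $7>4\sqrt3$. Finally, in your subcase $x=y$ the exceptional graph is $K_4-e$, not $R_2$, which does not contain this configuration.
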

\begin{proof}[\em\textbf {Proof of Claim \ref{x-3}}]
Suppose that $G$ contains a vertex $v$ with $N_G(v)=\{x,y\}$.

\medskip
{\indent\bf Case 1.} $xy\not\in E(G)$. In this case let $G'=(G-v)\cup \{xy\}$. Observe that $n_{2}(G')=n_{2}(G)-1$ and $n_3(G')=n_3(G)$. Then $p(G')=2^{-1}p(G)$. If $G'=K_4$, then $G=R_2\in \mathcal{W}$, a contradiction. Otherwise by the minimality of $G$ and Theorem \ref{x322},
 $F(G)\geq2F(G')\geq2p(G')
 =p(G)$,
a contradiction.

\medskip
{\indent\bf Case 2.} $xy\in E(G)$. Since $G\neq K_3$ and $G$ has no cut-edge by Claim \ref{x-2}, we can infer that $d_G(x)=d_G(y)=3$.  Suppose that $N_G(x)=\{v,y,x'\}$ and $N_G(y)=\{v,x,y'\}$. If $x'=y'$, then  either $G=K_4-e\in \mathcal{W}$, or the third edge incident with $x'=y'$  is a cut-edge, both are contradictions. Thus, we assume that $x'\neq y'$. Now we form $G'$ from $G$ by contracting the vertices $x,y,v$ into a new vertex $z$ of degree $2$, and obviously $G'\in \mathcal{A}_{n-2}\setminus \{K_4\}$. Since $n_{2}(G')=n_{2}(G)$ and $n_3(G')=n_3(G)-2$, we have $p(G')=2^{-2}3^{-1/2}p(G)$. Note that each spanning forest of $G'$ can be extended to a spanning forest of $G$ by at least $F(K_3)=7$ ways obtained by adding any  spanning forest of $G[\{x,y,v\}]\cong K_3$.  By the minimality of $G$, $$F(G)\geq F(K_3)F(G')\geq7p(G')=7\cdot2^{-2}3^{-1/2}p(G)>p(G),$$
a contradiction. Therefore, Cases $1$ and $2$ conclude Claim \ref{x-3}.
{\hfill}
\end{proof}

\begin{claim}\label{x-4}
$G$ contains no subgraph isomorphic to $K_4-e$.
\end{claim}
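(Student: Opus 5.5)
The plan is to take a copy of $K_4-e$ in $G$, shrink it to a single vertex of degree $2$, and compare $F$ before and after using Lemma \ref{x876}; the minimality of $G$ then gives a contradiction. Suppose on the contrary that $G$ contains a subgraph isomorphic to $K_4-e$, on vertices $a,b,c,d$ with edges $ab,ac,bc,bd,cd$ and $ad$ the missing edge.

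First I pin down the local structure. By Claim \ref{x-3}, $G$ is $3$-regular, so $b$ and $c$ have no neighbours outside $\{a,b,c,d\}$. If $ad\in E(G)$, then this set induces a $3$-regular component and $G\cong K_4$, which is excluded. Hence $a$ has exactly one further neighbour $a'$ and $d$ has exactly one further neighbour $d'$. If $a'=d'$, the third edge at $a'$ is a cut-edge, contradicting Claim \ref{x-2}; if $a'$ has no third edge, then $\deg a'=2$, contradicting Claim \ref{x-3}. So $a'\neq d'$.

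Next I form $G'$ from $G$ by contracting $\{a,b,c,d\}$ into a new vertex $z$ with $N_{G'}(z)=\{a',d'\}$. Since $a'\ne d'$, $G'$ is simple and connected, its degrees lie in $\{2,3\}$, it has fewer edges than $G$, and it is not $K_4$ because it has a vertex of degree $2$. Compared with $G$, we have $n_2(G')=n_2(G)+1$ and $n_3(G')=n_3(G)-4$, so $p(G')=2^{-3}3^{-1}p(G)=p(G)/24$. By minimality, $F(G')\ge p(G')$. It therefore suffices to prove $F(G)\ge 24F(G')$.

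For this I apply Lemma \ref{x876} with common subgraph $H=G-\{a,b,c,d\}=G'-z$, splitting according to whether $a'$ and $d'$ lie in the same component of the forest $T_0$ of $H$. The extension counts come from Remark \ref{xx234}.
\begin{itemize}
\item If $a'$ and $d'$ are in different components, then $G'$ gives $F(P_3)=4$ extensions. In $G$ the contracted graph is $K_4-e$ with two pendant edges at distinct vertices, giving $24\cdot 2\cdot 2=96$ extensions. The ratio is $24$.
\item If $a'$ and $d'$ are in the same component, then $G'$ gives $F$ of a double edge, namely $3$. In $G$ the contracted graph is $K_4-e$ plus a new vertex joined to $a$ and $d$, which is $R_2$, giving $F(R_2)=86$ by Lemma \ref{x73}. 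This can also be checked by deletion–contraction on one of the new edges: $48+F(K_4)=48+38$. The ratio is $86/3>24$.
\end{itemize}
Hence $F(G)\ge 24F(G')\ge 24p(G')=p(G)$, a contradiction.

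The main obstacle is this extension comparison. A naive argument that ``each forest of $G'$ extends in $F(K_4-e)$ ways'' fails, because the two outer edges at $a$ and $d$ can close a cycle through $H$. That is why I split via Lemma \ref{x876} according to whether $a'$ and $d'$ are connected in $T_0$, and verify the small counts $96$ and $86$ in the two cases.
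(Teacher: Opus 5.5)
Your proof is correct and follows the same route as the paper. You contract the $K_4-e$ to a degree-$2$ vertex $z$, observe $p(G')=p(G)/24$, show $F(G)\ge 24F(G')$, and invoke minimality; you also handle the non-induced case and $a'\neq d'$ more explicitly than the paper does.

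The only difference is how you justify $F(G)\ge 24F(G')$: you go through Lemma~\ref{x876} with a two-case split. The paper instead uses the direct injection sending a pair (forest of $G'$, forest of $K_4-e$) to a forest of $G$. Contrary to your closing remark, that naive argument is valid: if a forest of $G'$ contains both $za'$ and $zd'$, then $a'$ and $d'$ lie in different components of that forest minus $z$, so no cycle can close through $H$.
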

\begin{proof}[\em\textbf {Proof of Claim \ref{x-4}}]
 Suppose that $G$ contains an induced subgraph $K_4-e$ with  vertex set $V'=\{u,v,x,y\}$ and $xy\not\in E(G)$. Let $x'$ and $y'$ be the neighbours of $x$ and $y$ not in $V'$, respectively. Since $G$ is $3$-regular and has no cut-edge, we have $x'\neq y'$.  Now we form $G'$ from $G$ by contracting  $V'$ into a new vertex $z$ of degree $2$, and obviously $G'\in \mathcal{A}_{n-3}\setminus \{K_4\}$. Since $n_{2}(G')=n_{2}(G)+1$ and $n_3(G')=n_3(G)-4$, we have $p(G')=2^{-3}3^{-1}p(G)$. Note that each spanning forest of $G'$ can be extended to a spanning forest of $G$ by at least $F(K_4-e)=24$ ways obtained by adding any  spanning forest of $G[V']\cong K_4-e$.  By the minimality of $G$, we have
 $$F(G)\geq 24F(G')\geq24p(G')=24\cdot2^{-3}3^{-1}p(G)=p(G),$$ a contradiction, the result follows.
{\hfill}
\end{proof}

\begin{claim}\label{x-5}
$G$ contains no triangles.
\end{claim}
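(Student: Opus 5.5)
Suppose, towards a contradiction, that $G$ contains a triangle $T=xyz$. By Claim \ref{x-3} the graph $G$ is $3$-regular, so each of $x,y,z$ has exactly one neighbour outside $T$; call these $x',y',z'$. If two of them coincide, say $x'=y'$, then $\{x,y,z,x'\}$ induces a subgraph containing $K_4-e$, which Claim \ref{x-4} forbids. Hence $x',y',z'$ are distinct. Moreover, if $x'y'\in E(G)$ we are still allowed to proceed, since we will only contract.

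We form $G'$ from $G$ by contracting $\{x,y,z\}$ to a single new vertex $w$. The three edges $xx',yy',zz'$ become $wx',wy',wz'$, which are distinct edges because $x',y',z'$ are distinct. Thus $G'$ is simple and $3$-regular of order $n-2$, and it is connected. We also need $G'\neq K_4$. If $G'=K_4$, then $G$ is $K_4$ with one vertex replaced by a triangle, and this is exactly the $3$-prism $R_1\in\mathcal{W}$, a contradiction. Therefore $G'\in\mathcal{A}_{n-2}\setminus\{K_4\}$, and $G'$ has fewer edges than $G$. By the minimality of $G$ we get $F(G')\ge p(G')$, where
\[
p(G')=2^{-2}3^{-1/2}p(G)
\]
because $n_3$ drops by $2$.

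Next we compare $F(G)$ with $F(G')$ using Lemma \ref{x876}, taking $H=G-\{x,y,z\}=G'-w$. Fix a spanning forest $T_0$ of $H$. In $G'$, the extensions of $T_0$ use edges of the star at $w$. By Remark \ref{xx234} their number is $\prod(d_i+1)$, where the $d_i$ record how $x',y',z'$ are distributed among the components of $T_0$. There are three possibilities:
\begin{itemize}[label={}]
\item the three vertices lie in distinct components, giving $8$;
\item two of them share a component, giving $3\cdot 2=6$;
\item all three share a component, giving $4$.
\end{itemize}
In $G$, the extensions correspond to spanning forests of the graph obtained from the triangle with pendant edges by contracting each component of $T_0$. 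In each of the three cases this count is a small explicit number:
\begin{itemize}[label={}]
\item distinct components: a triangle with three pendant edges, giving $7\cdot 8=56$;
\item two shared: the triangle with $x,y$ joined through a common vertex $u$, where $F$ equals that of $K_4-e$ plus a pendant edge, giving $24\cdot 2=48$;
\item all three shared: $K_4$, giving $38$.
\end{itemize}
The corresponding ratios are $56/8=7$, $48/6=8$ and $38/4=9.5$. Their minimum is $7$, so $F(G)\ge 7F(G')$.

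Combining the two estimates,
\[
F(G)\ge 7p(G')=7\cdot 2^{-2}3^{-1/2}p(G)>p(G),
\]
since $7>4\sqrt3\approx 6.93$. This contradicts the choice of $G$, so $G$ contains no triangle.

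The main point needing care is the borderline margin $7$ versus $4\sqrt3$. This means the three case computations must be checked exactly, and we must confirm that the only way to get $G'=K_4$ is $G=R_1$.
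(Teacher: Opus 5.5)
Your proposal is correct and follows the paper's proof: you contract the triangle to a vertex $w$, rule out $G'=K_4$ because then $G=R_1$, and conclude from $F(G)\ge 7F(G')\ge 7\cdot 2^{-2}3^{-1/2}p(G)>p(G)$. The one difference is how you get the factor $7$. You use Lemma~\ref{x876} with a three-case count, and your values $56/8$, $48/6$, $38/4$ are all right. The paper gets it immediately: every spanning forest of $G'$ extends to one of $G$ by adding any of the $F(K_3)=7$ spanning forests of the triangle, so your case analysis is valid but unnecessary, since its minimum ratio is exactly that $7$.
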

\begin{proof}[\em\textbf {Proof of Claim \ref{x-5}}]  Suppose that  $V'=\{x,y,z\}$ induces a triangle in $G$.  Let $x'$, $y'$ and $z'$ be the neighbours of $x$, $y$ and $z$ not in $V'$, respectively. Due to Claim \ref{x-4}, we have  $x'$, $y'$ and $z'$ are pairwise distinct.  Let $G'$ be the graph obtained from $G$ by contracting  $V'$ into a new vertex $w$ of degree $3$.   If $G'=K_4$, then $G=R_1\in \mathcal{W}$, a contradiction. Thus, $G'\in\mathcal{A}_{n-2}\setminus \{K_4\}$. Since $n_{2}(G')=n_{2}(G)$ and $n_3(G')=n_3(G)-2$, we have $p(G')=2^{-2}3^{-1/2}p(G)$. Observe that each spanning forest of $G'$ can be extended to a spanning forest of $G$ by at least $F(K_3)=7$ ways obtained by adding any  spanning forest of $G[\{x,y,z\}]\cong K_3$.  By the minimality of $G$, we obtain that $$F(G)\geq 7F(G')\geq7p(G')=7\cdot2^{-2}3^{-1/2}p(G)>p(G),$$ a contradiction, the result follows.
{\hfill}
\end{proof}

We are now ready to prove Theorem \ref{x2}.

\begin{proof}[\em\textbf {Proof of Theorem \ref{x2}}]
Suppose that there exists a graph $G\in \mathcal{A}_{n}\setminus \{K_4\}$  with the minimum number of edges  such that $F(G)<p(G)$. Note that $n\geq4$ and $G$ is $3$-regular by Claim \ref{x-3}.
Now consider an arbitrary edge $uv$ of $G$ and $uv$ is a non-cut edge by Claim \ref{x-2}. Let $u_1,u_2$ and  $v_1,v_2$ be two neighbours of $u$ and $v$ not in $\{u,v\}$, respectively. By Claim \ref{x-5}, we have $u_1u_2, v_1v_2\not\in E(G)$. Let $G'$ be the graph obtained from $G$ by deleting the vertices $u,v$ and adding the edges $u_1u_2$, $v_1v_2$. If $G'=K_4$, then $G=K_{3,3}\in \mathcal{W}$, a contradiction. Therefore, $G'\in\mathcal{A}_{n-2}\setminus \{K_4\}$.

Let $G^{1}$ (resp. $G^{2}$) be  a subgraph induced by the edge set $\{uv,uu_1,uu_2,vv_1,vv_2\}$  (resp. $\{u_1u_2,v_1v_2\}$) in $G$ (resp. $G'$). We consider any spanning forest $T_0$ of $G'-E(G^{2})=G-\{u,v\}$ that can be extended to a spanning forest of $G'$ by adding (possibly $0$) edges in $\{u_1u_2,v_1v_2\}$.   Next we  show that the ratio between $F_{G^{1}}(G,T_0)$ and $F_{G^2}(G',T_0)$  is at least $7$.  The following cases depend on which vertices among    $u_1,u_2,v_1,v_2$  are in a same component.
\begin{itemize}[itemsep=0.5ex, parsep=0.5ex] 
    \item If $u_1,u_2,v_1,v_2$ are in different components of $T_0$, then $F_{G^{2}}(G',T_0)=F(G^2)=4$ and $F_{G^{1}}(G,T_0)=F(G^1)=2^{5}$.
    \item If exactly two of them are in a same component in $T_0$, by symmetry, there are two situations in the following.
First, without loss of generality, assume that $u_1, u_2$ are in a same component in $T_0$ and $v_1,v_2$ are in two other different components, respectively. Then  $F_{G^{2}}(G',T_0)=F(G^2/\{u_1,u_2\})=2$ and $F_{G^{1}}(G,T_0)=F(G^1/\{u_1,u_2\})=24$. Second, without loss of generality, assume that $u_1, v_1$ are in a same component in $T_0$ and $u_2,v_2$ are in two other different components, respectively. Then $F_{G^{2}}(G',T_0)=F(G^2/\{u_1,v_1\})=4$ and $F_{G^{1}}(G,T_0)=F(G^1/\{u_1,v_1\})=4F(K_3)=28$.
\item If  there are two pairs among them  and  each of which contains two vertices within the same component in $T_0$, then there are two cases   by symmetry as follows. First, without loss of generality, assume that $u_1,u_2$ and $v_1,v_2$ are in different components  of $T_0$, respectively. Then we have $F_{G^{2}}(G',T_0)=F\left(\left(G^2/\{u_1,u_2\}\right)/\{v_1,v_2\}\right)=1$ and $F_{G^1}(G,T_0)=F\left(\left(G^1/\{u_1,u_2\}\right)/\{v_1,v_2\}\right)=18$. Second, without loss of generality, assume that $u_1,v_1$ and $u_2,v_2$ are in different components  of $T_0$, respectively.    Then we have $F_{G^{2}}(G',T_0)=F\left(\left(G^2/\{u_1,v_1\}\right)/\{u_2,v_2\}\right)=3$ and $F_{G^1}(G,T_0)=F\left(\left(G^1/\{u_1,v_1\}\right)/\{u_2,v_2\}\right)=F(K_4-e)=24$.
    \item If three of them are in a same component in $T_0$, say $u_1, u_2,v_1$ by symmetry, and $v_2$ is in a different  one, then $F_{G^{2}}(G',T_0)=F(G^2/\{u_1,u_2,v_1\})=2$ and $F_{G^{1}}(G,T_0)=F(G^1/\{u_1,u_2,v_1\})=20$.
    \item If all of $u_1,u_2,v_1,v_2$  are in a same component  of $T_0$, then  we have $F_{G^{2}}(G',T_0)=F(G^2/\{u_1,u_2,v_1,v_2\})=1$ and $F_{G^{1}}(G,T_0)=F(G^1/\{u_1,u_2,v_1,v_2\})=14$.
\end{itemize}
Since $n_{2}(G')=n_{2}(G)$ and $n_3(G')=n_3(G)-2$, we have $p(G')=2^{-2}3^{-1/2}p(G)$. By the minimality of $G$ and Lemma \ref{x876},
$$F(G)\geq7F(G')\geq7p(G')
 =7\cdot2^{-2}3^{-1/2}p(G)
 >p(G),$$
 a contradiction, the result follows.
{\hfill}
\end{proof}

\section{Proof of Theorem \ref{xh-1}}

\begin{figure}[htbp]
\centering
\vspace{-0.8cm}
\hspace{-1cm}
\begin{tikzpicture}[scale=1]
  \coordinate (u) at (-1,1);
  \coordinate (v) at (-1,-1);
  \coordinate (w) at (1,-1);
  \coordinate (z) at (1,1);

  \coordinate (a) at (-0.5,0.5);
  \coordinate (b) at (0.5,0.5);
  \coordinate (c) at (0.5,-0.5);
  \coordinate (d) at (-0.5,-0.5);

  \draw (a) -- (c);
  \draw (a) -- (d);
  \draw (a) -- (b);
  \draw (b) -- (c);
  \draw (b) -- (d);
  \draw (c) -- (d);

  \draw (u) -- (a);
  \draw (u) -- (z);
  \draw (u) -- (v);
  \draw(u)--(w);
  \draw (v) -- (d);
  \draw (v) -- (w);
  \draw (v) -- (z);
  \draw (w) -- (z);
  \draw (w) -- (c);
  \draw(z)--(b);

  \draw (u)..  controls (-0.75,2.5) and (2.5,-0.8) .. (w);

  \draw (v) .. controls (-2.5,-0.8) and (0.75,2.5) .. (z);

  \foreach \point in {u,v,w,z,a,b,c,d} {
    \filldraw [black] (\point) circle (2pt);
  }

  \node[above right] at (-0.5,0.4) {$v_1$};
  \node[below right] at (0.35,0.5) {$v_4$};
  \node[below] at (0.5,-0.5) {$v_3$};
  \node[below ] at (-0.4,-0.5) {$v_2$};
  \node[below] at (0,-1.5) {$H_1$};
 \coordinate (u1) at (3,1);
  \coordinate (v1) at (3,-1);
  \coordinate (w1) at (5,-1);
  \coordinate (z1) at (5,1);

  \coordinate (a1) at (3.5,0.5);
  \coordinate (b1) at (4.5,0.5);
  \coordinate (c1) at (4.5,-0.5);
  \coordinate (d1) at (3.5,-0.5);
\coordinate (e1) at (2.5,0);

  \draw (a1) -- (c1);
  \draw (a1) -- (d1);
  \draw (a1) -- (b1);
  \draw (b1) -- (c1);
  \draw (b1) -- (d1);
  \draw (c1) -- (d1);
\draw(e1)--(u1);
\draw(e1)--(v1);

  \draw (u1) -- (a1);
  \draw (u1) -- (z1);
  \draw (u1) -- (v1);

  \draw (v1) -- (d1);
  \draw (v1) -- (w1);

  \draw (w1) -- (z1);
  \draw (w1) -- (c1);
  \draw(z1)--(b1);

  \draw (e1)..  controls (2,-2) and (4.5,-1.5) .. (w1);

  \draw (e1) .. controls (2,2) and (4.5,1.5) .. (z1);

  \foreach \point in {u1,v1,w1,z1,a1,b1,c1,d1,e1} {
    \filldraw [black] (\point) circle (2pt);
  }

  \node[above ] at (3.5,0.5) {$v_1$};
  \node[above ] at (4.5,0.5) {$v_4$};
  \node[below ] at (4.5,-0.5) {$v_3$};
  \node[below ] at (3.7,-0.5) {$v_2$};
  \node[below] at (4,-1.5) {$H_2$};

  \coordinate (u2) at (7.6,1.3);
  \coordinate (v2) at (7.6,-1.3);
  \coordinate (w2) at (9.5,0);

  \coordinate (a2) at (7.1,0.5);
  \coordinate (b2) at (8.1,0.5);
  \coordinate (c2) at (8.1,-0.5);
  \coordinate (d2) at (7.1,-0.5);

  \draw (a2) -- (c2);
  \draw (a2) -- (d2);
  \draw (a2) -- (b2);
  \draw (b2) -- (c2);
  \draw (b2) -- (d2);
  \draw (c2) -- (d2);

  \draw (u2) -- (a2);
  \draw (u2) -- (b2);
  \draw (u2) -- (w2);

  \draw (v2) -- (d2);
  \draw (v2) -- (c2);
\draw (v2) -- (w2);

  \draw (u2)..  controls (5.7,1) and (5.7,-1) .. (v2);

  \foreach \point in {u2,v2,w2,a2,b2,c2,d2} {
    \filldraw [black] (\point) circle (2pt);
  }

  \node[below left ] at (7.1,0.7) {$v_1$};
  \node[right ] at (8.1,0.4) {$v_4$};
  \node[ right] at (8.1,-0.4) {$v_3$};
  \node[above left] at (7.1,-0.8) {$v_2$};
  \node[below] at (7.6,-1.5) {$H_3$};


  \coordinate (u3) at (10.5,1);
  \coordinate (v3) at (10.5,-1);
  \coordinate (w3) at (12.5,-1);
  \coordinate (z3) at (12.5,1);

  \coordinate (a3) at (11.5,1);
  \coordinate (b3) at (11.5,-1);
  \coordinate (c3) at (11.5,0);

  \draw (u3) -- (a3);
  \draw (u3) -- (c3);
  \draw (u3) -- (b3);
  \draw (u3) -- (v3);

  \draw (v3) -- (a3);
  \draw (v3) -- (b3);
\draw (v3) -- (c3);

 \draw (z3) -- (a3);
\draw (z3) -- (b3);
\draw (z3) -- (c3);
\draw (z3) -- (w3);

 \draw (w3) -- (a3);
\draw (w3) -- (b3);
\draw (w3) -- (c3);
  \foreach \point in {u3,v3,w3,z3,a3,b3,c3} {
    \filldraw [black] (\point) circle (2pt);
  }

  \node[left] at (10.5,1) {$x$};
  \node[left] at (10.5,-1) {$y$};
  \node[above ] at (11.5,1) {$u$};
  \node[above ] at (11.5,0.1) {$v$};
  \node[below ] at (11.5,-1) {$w$};
  \node[below] at (11.5,-1.5) {$H_4$};

  \coordinate (u4) at (-1,-3);
  \coordinate (v4) at (-1,-5);
  \coordinate (w4) at (1,-5);
  \coordinate (z4) at (1,-3);
 \coordinate (y4) at (1,-4);
  \coordinate (a4) at (0,-3);
  \coordinate (b4) at (0,-5);
  \coordinate (c4) at (0,-4);

  \draw (u4) -- (a4);
  \draw (u4) -- (c4);
  \draw (u4) -- (b4);
  \draw (u4) -- (v4);

  \draw (v4) -- (a4);
  \draw (v4) -- (b4);
\draw (v4) -- (c4);

 \draw (z4) -- (a4);
\draw (y4) -- (b4);
\draw (z4) -- (c4);
\draw (z4) -- (w4);

 \draw (y4) -- (a4);
\draw (w4) -- (b4);
\draw (w4) -- (c4);

 \draw (z4) .. controls (1.5,-3.4) and (1.5,-4.5) .. (w4);
  \foreach \point in {u4,v4,w4,z4,a4,b4,c4,y4} {
    \filldraw [black] (\point) circle (2pt);
  }

  \node[left] at (-1,-3) {$x$};
  \node[left] at (-1,-5) {$y$};
  \node[above ] at (0,-3) {$u$};
  \node[above ] at (0,-3.9) {$v$};
  \node[below ] at (0,-5) {$w$};
  \node[below] at (0,-5.5) {$H_5$};

  \coordinate (u5) at (3,-3);
  \coordinate (v5) at (4.5,-5);
  \coordinate (w5) at (3.5,-5);
  \coordinate (z5) at (4,-3);
 \coordinate (y5) at (3.5,-4);
  \coordinate (a5) at (5,-3);
  \coordinate (b5) at (4.5,-4);

  \draw (u5) -- (z5);
  \draw (u5) -- (w5);
  \draw (u5) -- (y5);
  \draw (z5) -- (a5);

  \draw (z5) -- (b5);
  \draw (z5) -- (y5);
\draw (a5) -- (b5);

 \draw (a5) -- (v5);
\draw (w5) -- (v5);
\draw (v5) -- (b5);
\draw (w5) -- (y5);

 \draw (y5) -- (b5);

 \draw (u5) .. controls (4,-4) and (4,-4) .. (v5);
 \draw (a5) .. controls (4,-4) and (4,-4) .. (w5);
  \foreach \point in {u5,v5,w5,z5,a5,b5,y5} {
    \filldraw [black] (\point) circle (2pt);
  }

  \node[left] at (3,-3) {$u$};
  \node[right] at (5,-3) {$v$};
  \node[left ] at (3.5,-5) {$x$};
  \node[right] at (4.5,-5) {$y$};
  \node[below] at (4,-5.5) {$H_6$};

   \coordinate (u6) at (7.1,-3);
  \coordinate (v6) at (6.1,-4);
  \coordinate (w6) at (7.6,-3.65);
  \coordinate (z6) at (8.1,-3);

  \coordinate (a6) at (7.1,-4.15);
  \coordinate (b6) at (8.1,-4.15);
  \coordinate (c6) at (9.1,-4);
  \coordinate (d6) at (8.6,-5);
 \coordinate (e6) at (6.6,-5);

  \draw (u6) -- (z6);
  \draw (u6) -- (v6);
  \draw (u6) -- (w6);
  \draw (z6) -- (w6);
  \draw (v6) -- (a6);
  \draw (v6) -- (e6);

  \draw (e6) -- (a6);
  \draw (w6) -- (a6);
  \draw (w6) -- (b6);
  \draw(a6)--(b6);
  \draw (b6) -- (c6);
  \draw (b6) -- (d6);
  \draw (e6) -- (d6);
  \draw (z6) -- (c6);
  \draw (c6) -- (d6);

  \draw (u6)..  controls (7.6,-3) and (9,-4) .. (d6);

 \draw (v6) .. controls (7.1,-5) and (8.1,-5) .. (c6);

  \draw (e6) .. controls (6.3,-4) and (7.6,-3) .. (z6);

  \foreach \point in {u6,w6,a6,b6,c6,e6} {
    \filldraw [black] (\point) circle (2pt);
  }
\draw[black,fill=white](z6) circle (2pt);
\draw[black,fill=white](v6) circle (2pt);
\draw[black,fill=white](d6) circle (2pt);
  \node[below] at (7.6,-3.65) {$x$};
  \node[below ] at (7.3,-4.15) {$y$};
  \node[below] at (7.9,-4.15){$z$};
  \node[below] at (7.6,-5.5) {$H_7$};

   \coordinate (u7) at (11,-3);
  \coordinate (v7) at (10,-4);
  \coordinate (w7) at (11.5,-3.65);
  \coordinate (z7) at (12,-3);

  \coordinate (a7) at (11,-4.15);
  \coordinate (b7) at (12,-4.15);
  \coordinate (c7) at (13,-4);
  \coordinate (d7) at (12.5,-5);
 \coordinate (e7) at (10.5,-5);

  \draw (u7) -- (z7);
  \draw (u7) -- (v7);
  \draw (u7) -- (w7);
  \draw (z7) -- (w7);
  \draw (v7) -- (a7);
  \draw (v7) -- (e7);

  \draw (d7) -- (a7);
  \draw (w7) -- (a7);
  \draw (w7) -- (b7);
  \draw(a7)--(b7);
  \draw (b7) -- (c7);
  \draw (b7) -- (e7);
  \draw (e7) -- (d7);
  \draw (z7) -- (c7);
  \draw (c7) -- (d7);

  \draw (u7)..  controls (11.5,-3) and (12.9,-4) .. (d7);

 \draw (v7) .. controls (11,-5) and (12,-5) .. (c7);

  \draw (e7) .. controls (10.2,-4) and (11.5,-3) .. (z7);

  \foreach \point in {u7,w7,a7,b7,c7,e7} {
    \filldraw [black] (\point) circle (2pt);
  }
\draw[black,fill=white](z7) circle (2pt);
\draw[black,fill=white](v7) circle (2pt);
\draw[black,fill=white](d7) circle (2pt);
  \node[below] at (11.5,-3.65) {$x$};
  \node[below ] at (11,-4.15) {$y$};
  \node[below] at (12,-4.15){$z$};
  \node[below] at (11.5,-5.5) {$H_8$};
\end{tikzpicture}

\vspace{0.6cm}
\begin{tikzpicture}[scale=1]
 \coordinate (aa3) at (-0.5,-7);
  \coordinate (bb3) at (-0.5,-7.5);
  \coordinate (cc3) at (-0.5,-8);
  \coordinate (dd3) at (-0.5,-8.5);
  \coordinate (ee3) at (-0.5,-9);
  \coordinate (ff3) at (-0.5,-9.5);

  \coordinate (xx3) at (1,-7.5);
  \coordinate (yy3) at (1,-8.25);
  \coordinate (zz3) at (1,-9);

  \draw (aa3) -- (bb3);
  \draw (bb3) -- (cc3);
  \draw (cc3) -- (dd3);
  \draw (dd3) -- (ee3);
  \draw (ee3) -- (ff3);
 \draw (xx3) -- (yy3);
\draw (yy3) -- (zz3);

 \draw (aa3) -- (xx3);
  \draw (bb3) -- (yy3);
  \draw (cc3) -- (zz3);
  \draw (dd3) -- (xx3);
  \draw (ee3) -- (yy3);
 \draw (ff3) -- (zz3);

 \draw (aa3) .. controls (-1,-7.25) and (-1,-7.75) .. (cc3);
\draw (dd3) .. controls (-1,-8.75) and (-1,-9.25) .. (ff3);
\draw (bb3) .. controls (-1,-8) and (-1,-8.5) .. (ee3);
\draw (aa3) .. controls (-1.5,-7.25) and (-1.5,-9.25) .. (ff3);
\draw (xx3) .. controls (1.6,-7.75) and (1.6,-8.75) .. (zz3);

  \foreach \point in {aa3,bb3,cc3,dd3,ee3,ff3,xx3,yy3,zz3} {
    \filldraw [black] (\point) circle (2pt);
  }
  \node[above] at (1,-7.4) {$x$};
  \node[right] at (1,-8.25) {$y$};
  \node[below] at (1,-9.1){$z$};
  \node[below] at (0,-10) {$Z_1$};

\coordinate (aa2) at (3.45,-7);
  \coordinate (bb2) at (3.45,-7.5);
  \coordinate (cc2) at (3.45,-8);
  \coordinate (dd2) at (3.45,-8.5);
  \coordinate (ee2) at (3.45,-9);
  \coordinate (ff2) at (3.45,-9.5);

  \coordinate (xx2) at (4.95,-7.5);
  \coordinate (yy2) at (4.95,-8.25);
  \coordinate (zz2) at (4.95,-9);

  \draw (aa2) -- (bb2);
  \draw (bb2) -- (cc2);
  \draw (cc2) -- (dd2);
  \draw (dd2) -- (ee2);
  \draw (ee2) -- (ff2);
 \draw (xx2) -- (yy2);
\draw (yy2) -- (zz2);

 \draw (aa2) -- (xx2);
  \draw (bb2) -- (yy2);
  \draw (cc2) -- (zz2);
  \draw (dd2) -- (yy2);
  \draw (ee2) -- (xx2);
 \draw (ff2) -- (zz2);

 \draw (aa2) .. controls (2.95,-7.25) and (2.95,-7.75) .. (cc2);
\draw (dd2) .. controls (2.95,-8.75) and (2.95,-9.25) .. (ff2);
\draw (bb2) .. controls (2.95,-8) and (2.95,-8.5) .. (ee2);
\draw (aa2) .. controls (2.45,-7.25) and (2.45,-9.25) .. (ff2);
\draw (xx2) .. controls (5.55,-7.75) and (5.55,-8.75) .. (zz2);

  \foreach \point in {aa2,bb2,cc2,dd2,ee2,ff2,xx2,yy2,zz2} {
    \filldraw [black] (\point) circle (2pt);
  }
  \node[above] at (4.95,-7.4) {$x$};
  \node[right] at (4.95,-8.25) {$y$};
  \node[below] at (4.95,-9.1){$z$};
  \node[below] at (4,-10) {$Z_2$};

\coordinate (aa4) at (7.35,-7);
  \coordinate (bb4) at (7.35,-7.5);
  \coordinate (cc4) at (7.35,-8);
  \coordinate (dd4) at (7.35,-8.5);
  \coordinate (ee4) at (7.35,-9);
  \coordinate (ff4) at (7.35,-9.5);

  \coordinate (xx4) at (8.85,-7.5);
  \coordinate (yy4) at (8.85,-8.25);
  \coordinate (zz4) at (8.85,-9);

  \draw (aa4) -- (bb4);
  \draw (bb4) -- (cc4);
  \draw (cc4) -- (dd4);
  \draw (dd4) -- (ee4);
  \draw (ee4) -- (ff4);
 \draw (xx4) -- (yy4);
\draw (yy4) -- (zz4);

 \draw (aa4) -- (xx4);
  \draw (bb4) -- (yy4);
  \draw (cc4) -- (zz4);
  \draw (dd4) -- (zz4);
  \draw (ee4) -- (yy4);
 \draw (ff4) -- (xx4);

 \draw (aa4) .. controls (6.85,-7.25) and (6.85,-7.75) .. (cc4);
\draw (dd4) .. controls (6.85,-8.75) and (6.85,-9.25) .. (ff4);
\draw (bb4) .. controls (6.85,-8) and (6.85,-8.5) .. (ee4);
\draw (aa4) .. controls (6.35,-7.25) and (6.35,-9.25) .. (ff4);
\draw (xx4) .. controls (9.45,-7.75) and (9.45,-8.75) .. (zz4);

  \foreach \point in {aa4,bb4,cc4,dd4,ee4,ff4,xx4,yy4,zz4} {
    \filldraw [black] (\point) circle (2pt);
  }
  \node[above] at (8.85,-7.4) {$x$};
  \node[right] at (8.85,-8.25) {$y$};
  \node[below] at (8.85,-9.1){$z$};
  \node[below] at (8,-10) {$Z_3$};
  \end{tikzpicture}

\caption{Graphs $H_i$ and $Z_j$ for $i\in[8]$ and $j\in[3]$.}
\end{figure}

Before proving Theorem \ref{xh-1}, we need to define several graphs.  Let $X_6$ and $X_7$ be two graphs obtained from  $K_5$ and $K_6^{-}$ by subdividing an arbitrary edge (all edges being isomorphic), respectively. The graph $Y_5$ (resp. $Y_5'$) is obtained from $K_4$ by adding a new vertex $w$ and connecting
 $w$ to $2$ (resp. $3$) vertices of $K_4$. The graph $Y_6$ is obtained from $Y_5$ by adding a new vertex $w'$ together with edges joining $w'$ to  $2$ vertices  in $K_4$  that are distinct from that of $w$. Let $Y_6'$ be obtained from $Y_6$ by adding an edge $ww'$. Let $H_1$, $H_2$,...,$H_8$ be eight graphs and  $Z_1,Z_2,Z_3$ be the other three graphs as depicted in  Figure $1$. These graphs will appear in the following proofs. For any graph $G$, we define
 $$q(G)=2^{n_2(G)+\frac{3n_3(G)+n_4(G)-9}{5}}198^{\frac{n_3(G)+2n_4(G)+2}{10}}.$$
  Let us compute $F(G)$ and $q(G)$ for several graphs by using SageMath, see Table \ref{tab:eight_column}.

\begin{table}[htbp]
  \centering
  \begin{tabular}{lccc|cccc}
    \Xhline{1pt}
     $G$     & $F(G)$ &  $q(G)$ & $F(G)\geq q(G)$?  &$G$     & $F(G)$ &  $q(G)$ & $F(G)\geq q(G)$? \\ 
    \Xhline{1pt}
    $K_3$    & $7$      & $2^{\frac{6}{5}}198^{\frac{1}{5}}$ &Yes &  $H_2$ & $52485$ & $198^2$ & Yes  \\
     $K_{4}$   & $38$       &$2^{\frac{3}{5}}198^{\frac{3}{5}}$ &Yes & $H_{3}$ & $2457$ & $2^{\frac{2}{5}}198^{\frac{7}{5}}$ & Yes\\
    $K_5$    & $291$      &$2^{-\frac{4}{5}}198^{\frac{6}{5}}$     &No & $H_{4}$ & $4061$ & $2^{-\frac{2}{5}}198^{\frac{8}{5}}$ & Yes \\
     $K_6^-$   &  $1083$    &$2^{-\frac{3}{5}}198^{\frac{7}{5}}$      &No& $H_{5}$ & $14763$ & $2^{-\frac{1}{5}}198^{\frac{9}{5}}$ & Yes \\
     $X_6$   & $687$      &$2^{\frac{1}{5}}198^{\frac{6}{5}}$ &Yes     & $H_{6}$ & $4019$ & $2^{-\frac{2}{5}}198^{\frac{8}{5}}$ & Yes\\
       $X_7$   & $2527$      &$2^{\frac{2}{5}}198^{\frac{7}{5}}$ &Yes      & $H_7$ &57631 &$198^{2}$ &Yes \\
        $Y_5$   & $128$      &$2^{\frac{4}{5}}198^{\frac{4}{5}}$ &Yes & $H_8$ & 58975 & $198^{2}$ &Yes \\
         $Y_5'$   & $198$      &$198$ &Yes &   $Z_1$ & $57631$  & $198^{2}$ & Yes \\
          $Y_6$   & $431$      &$396$ &Yes & $Z_2$ &$58417$  & $198^{2}$ & Yes \\
          $Y_6'$    & $722$      &$2^{\frac{1}{5}}198^{\frac{6}{5}}$     &Yes  &$Z_3$ & $56101$ &$198^{2}$ & Yes \\
          $H_1$  & $14381$ & $2^{-\frac{1}{5}}198^{\frac{9}{5}}$ &Yes\\
   \Xhline{1pt}
  \end{tabular}
  \caption{The values of $F(G)$ and $q(G)$ for several graphs.}
  \label{tab:eight_column}
\end{table}

\begin{Remark}\label{x634}
Theorem \ref{xh-1} is true for all graphs  except $K_5$ and $K_6^-$ in Table \ref{tab:eight_column}.
 \end{Remark}

Let $\mathcal{M}$ be the set of all graphs  except $K_5$ and $K_6^-$ in Table \ref{tab:eight_column}. By Remark \ref{x634}, Theorem \ref{xh-1} is true for any graph $G\in \mathcal{M}$.  For each $n\geq 3$, let $\mathcal{B}_{n}$ be the set of all connected graphs of order $n$, whose vertex degrees are from $\{2,3,4\}$. Suppose that  $G\in \mathcal{B}_{n}\setminus \mathcal{E}$ is a graph with the minimum number of edges  such that $F(G)<q(G)$. Then $G\not\in \mathcal{M}$ and $n\geq4$ since $G\neq K_3$. Now  we show  some claims as follows.

\begin{claim}\label{xh-2}
$G$ has no cut-edge.
\end{claim}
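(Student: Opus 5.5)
We argue exactly as in the proof of Claim \ref{x-2}: a cut-edge is replaced by two smaller graphs, and the bookkeeping of $q$ works out identically.

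Suppose, for a contradiction, that $e$ is a cut-edge of $G$. Start from $e$ and extend it in both directions through vertices of degree $2$. Every edge at a degree-$2$ vertex on such a path is again a cut-edge, and $G$ is not a cycle because it has a cut-edge. So we obtain a path $P=v_1v_2\cdots v_s$ with the following properties:
\begin{itemize}
\item $P$ contains $e$, and all edges of $P$ are cut-edges;
\item $d(v_2)=\cdots=d(v_{s-1})=2$;
\item $d(v_1),d(v_s)\in\{3,4\}$.
\end{itemize}
Deleting the interior vertices of $P$, or the single edge $v_1v_2$ if $s=2$, leaves two components $G_1\ni v_1$ and $G_2\ni v_s$.

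In $G_i$ the endpoint of $P$ loses exactly one from its degree, so its degree becomes $2$ or $3$. All other degrees are unchanged, hence $G_i\in\mathcal{B}_{k_i}$. Since $G_i$ has minimum degree at least $2$, it contains a cycle and so $k_i\ge 3$.

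Moreover, $G_i\notin\mathcal{E}$. Both $K_5$ and $K_6^-$ are $4$-regular, whereas $G_i$ contains the vertex $v_1$ or $v_s$ of degree at most $3$. Since $G_i$ has fewer edges than $G$, the minimality of $G$ gives $F(G_i)\ge q(G_i)$.

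Next we count forests. The vertices $v_1,\dots,v_s$ are cut-vertices (or endpoints of a cut-edge), so repeated use of Lemma \ref{x111} gives
$$F(G)=F(K_2)^{s-1}F(G_1)F(G_2)=2^{s-1}F(G_1)F(G_2).$$

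It remains to check that $q(G_1)q(G_2)=2^{-(s-1)}q(G)$. Lowering an endpoint's degree from $3$ to $2$ changes $(n_2,n_3)$ by $(+1,-1)$. Lowering it from $4$ to $3$ changes $(n_3,n_4)$ by $(+1,-1)$. In either case the exponent of $2$ in $q$ increases by $2/5$ and the exponent of $198$ decreases by $1/10$; this coincidence is what makes the argument case-free. Combining all the changes:
\begin{itemize}
\item the two endpoints contribute $+4/5$ to the exponent of $2$ and $-2/10$ to the exponent of $198$;
\item the $s-2$ removed degree-$2$ vertices contribute $-(s-2)$ to the exponent of $2$;
\item the product $q(G_1)q(G_2)$ contains the constant terms $-9/5$ and $+2/10$ twice rather than once, giving an extra $-9/5$ to the exponent of $2$ and an extra $+2/10$ to the exponent of $198$.
\end{itemize}
The net exponent of $2$ is $-(s-2)+4/5-9/5=-(s-1)$, and the net exponent of $198$ is $-2/10+2/10=0$. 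This proves the identity.

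Therefore
$$F(G)=2^{s-1}F(G_1)F(G_2)\ge 2^{s-1}q(G_1)q(G_2)=q(G),$$
contradicting $F(G)<q(G)$. The only step needing care is the exponent bookkeeping and confirming that neither $G_i$ can be $K_5$ or $K_6^-$. Both are handled above, so no obstacle remains.
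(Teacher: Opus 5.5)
Your proof is correct and follows the same route as the paper. Like the paper, you extend the cut-edge to a maximal path of cut-edges through degree-$2$ vertices, split $G$ into $G_1,G_2$, and use $F(G)=2^{s-1}F(G_1)F(G_2)$ together with minimality and the identity $q(G_1)q(G_2)=2^{-(s-1)}q(G)$. Your additions only fill in details the paper asserts without comment: the observation that lowering a degree from $3$ to $2$ or from $4$ to $3$ affects $q$ identically, and the $4$-regularity argument showing $G_i\notin\mathcal{E}$.
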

\begin{proof}[\em\textbf {Proof of Claim \ref{xh-2}}]
 Suppose on the contrary that $e$ is a cut-edge of $G$. There exists a path $P=v_{1}v_{2}\cdots v_s$ of length $s-1$ in $G$ that contains $e$, where $d(v_1),d(v_s)\geq3$ and $d(v_2)=d(v_3)=\cdots=d(v_{s-1})=2$. Note that  all the edges of $P$ are cut-edges. Deleting the interior vertices of $P$ forms two connected components $G_1$ and $G_2$. Both $G_1$ and $G_2$  have a vertex of degree at least $2$ and at most 3 (namely, $v_1$ and $v_s$), and   hence each of them belongs to some $\mathcal{B}_{k}\setminus \{K_5,K_6^{-}\}$ where $k\geq 3$. Whatever the degrees of $v_1$ and $v_s$, we always have $q(G_1)q(G_2)=2^{-(s-1)}q(G)$.   By the minimality of $G$, we have
$$F(G)=2^{s-1}F(G_1)F(G_2)\geq 2^{s-1}q(G_1)q(G_2)
=q(G).$$
This contradiction concludes Claim \ref{xh-2}.
{\hfill}
\end{proof}

\begin{claim}\label{xh-3}
$G$ has no cut-vertex.
\end{claim}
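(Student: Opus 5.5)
The plan is to use the minimality of $G$ together with the multiplicativity of $F$ over blocks (Lemma \ref{x111}). Suppose for contradiction that $v$ is a cut-vertex of $G$, and let $G_1,\ldots,G_t$ with $t\geq 2$ be the components of $G-v$.

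\textbf{Step 1: the structure at $v$.} Each component $G_i$ receives at least one edge from $v$. If some $G_i$ received exactly one edge, that edge would be a cut-edge, contradicting Claim \ref{xh-2}. So each $G_i$ receives at least two edges from $v$. Since $d(v)\leq 4$ and $t\geq 2$, this forces $d(v)=4$, $t=2$, and each of $G_1,G_2$ receiving exactly two edges from $v$.

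\textbf{Step 2: splitting $G$.} Put $G_i'=G[V(G_i)\cup\{v\}]$ for $i=1,2$. By Lemma \ref{x111}, $F(G)=F(G_1')F(G_2')$. Each $G_i'$ has the following properties:
\begin{itemize}
\item it is connected, with all degrees in $\{2,3,4\}$, since $v$ has degree $2$ in $G_i'$ and all other degrees are unchanged;
\item it has order at least $3$, because $v$ has two distinct neighbours in $G_i'$ ($G$ is simple);
\item it has strictly fewer edges than $G$;
\item it is not in $\mathcal{E}$, because $K_5$ and $K_6^-$ are $4$-regular while $v$ has degree $2$ in $G_i'$.
\end{itemize}
By the minimality of $G$, we get $F(G_i')\geq q(G_i')$ for $i=1,2$.

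\textbf{Step 3: bookkeeping of $q$.} The degree counts change as follows:
\begin{itemize}
\item $n_2(G_1')+n_2(G_2')=n_2(G)+2$, since $v$ is counted once in each part as a degree-$2$ vertex;
\item $n_3(G_1')+n_3(G_2')=n_3(G)$;
\item $n_4(G_1')+n_4(G_2')=n_4(G)-1$.
\end{itemize}
The exponent of $2$ in $q(G_1')q(G_2')$ is then
$$n_2(G)+2+\frac{3n_3(G)+n_4(G)-1-18}{5}=n_2(G)+\frac{3n_3(G)+n_4(G)-9}{5},$$
which is the exponent of $2$ in $q(G)$. The exponent of $198$ is
$$\frac{n_3(G)+2n_4(G)-2+4}{10}=\frac{n_3(G)+2n_4(G)+2}{10},$$
which is also the exponent in $q(G)$. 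Hence $q(G_1')q(G_2')=q(G)$.

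\textbf{Conclusion.} Combining the steps gives $F(G)\geq q(G_1')q(G_2')=q(G)$, contradicting the choice of $G$. The only step needing care is Step 1: it relies on Claim \ref{xh-2} to exclude a component attached to $v$ by a single edge, and on the degree bound $d(v)\leq 4$ to force exactly two components each joined to $v$ by two edges. The rest is routine.
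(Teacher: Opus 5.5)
Your proof is correct and follows the paper's argument essentially step for step. Claim \ref{xh-2} forces $d(v)=4$ with exactly two components each joined to $v$ by two edges, you split at $v$ via Lemma \ref{x111}, and the degree bookkeeping gives $q(G_1')q(G_2')=q(G)$; your extra checks (order at least $3$, and $G_i'\notin\mathcal{E}$ because $v$ has degree $2$ there) just make explicit what the paper leaves implicit.
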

\begin{proof}[\em\textbf {Proof of Claim \ref{xh-3}}]
Suppose that $u$ is a cut-vertex of $G$. Since $G$ has no cut-edge by Claim \ref{xh-2}, we necessarily have $d_{G}(u)=4$, and $G-u$ has exactly $2$ components $Q_{1}$ and $Q_{2}$, each of which contains precisely $2$ neighbours of $u$. For $i=1,2$, set $G_{i}=Q_{i}+u$, so $d_{G_i}(u)=2$. Then each of $G_{1}$ and $G_{2}$  belongs to some $\mathcal{B}_{k}\setminus \{K_5,K_6^{-}\}$ for $k\geq3$. Since $n_{2}(G_1)+n_2(G_2)=n_{2}(G)+2$, $n_3(G_1)+n_3(G_2)=n_3(G)$ and $n_4(G_1)+n_4(G_2)=n_4(G)-1$, we have $q(G_1)q(G_2)=2^{2-\frac{10}{5}}\cdot198^{\frac{-2}{10}+\frac{2}{10}}q(G)=q(G)$. By the minimality of $G$, we have
$$F(G)=F(G_1)F(G_2)\geq q(G_1)q(G_2)=q(G),$$
a contradiction, the result follows.
{\hfill}
\end{proof}

\begin{claim}\label{xh-4}
If $v\in V(G)$ has $d_G(v)=2$, then $v$ is contained in a triangle of $G$.
\end{claim}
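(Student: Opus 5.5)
We argue by contradiction, mirroring Case 1 of Claim \ref{x-3}. Suppose $v\in V(G)$ has $N_G(v)=\{x,y\}$ and $v$ lies in no triangle, so $xy\notin E(G)$. Since $d_G(v)=2$, a complete lift of $v$ is available: lift the pair $vx,vy$ to obtain the simple graph $G'=(G-v)\cup\{xy\}$. Theorem \ref{x322} with $m=1$ and $\ell_1=2$ then gives
$$F(G)\ge 2F(G').$$
The degrees of $x$ and $y$ are unchanged, every other degree is untouched, and $G'$ is connected. Hence $G'\in\mathcal{B}_{n-1}$ with $n_2(G')=n_2(G)-1$, $n_3(G')=n_3(G)$ and $n_4(G')=n_4(G)$. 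From the formula for $q$ this yields $q(G')=2^{-1}q(G)$. Moreover $G'$ has one edge fewer than $G$.

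If $G'\notin\mathcal{E}$, the minimality of $G$ gives $F(G')\ge q(G')$. Then
$$F(G)\ge 2F(G')\ge 2q(G')=q(G),$$
contradicting the choice of $G$.

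It remains to treat $G'\in\mathcal{E}$, which is the only real obstacle. In this case $G$ is obtained from $K_5$ or $K_6^-$ by subdividing an edge $xy$. All edges of $K_5$ are equivalent, and all edges of $K_6^-$ are equivalent as well. Therefore $G\cong X_6$ or $G\cong X_7$. Both graphs lie in $\mathcal{M}$ by Table \ref{tab:eight_column}, which contradicts $G\notin\mathcal{M}$.

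We conclude that $xy\in E(G)$, so $v$ lies in the triangle $vxy$.
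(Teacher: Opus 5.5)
Your proposal is correct and is essentially the paper's own proof. Like the paper, you form $G'=(G-v)\cup\{xy\}$, use Theorem \ref{x322} with $\ell_1=2$ and $q(G')=2^{-1}q(G)$ to get $F(G)\ge 2F(G')\ge q(G)$, and rule out $G'\in\mathcal{E}$ by noting that then $G\in\{X_6,X_7\}\subseteq\mathcal{M}$.
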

\begin{proof}[\em\textbf {Proof of Claim \ref{xh-4}}]
Suppose that $v\in V(G)$ has $d_G(v)=2$ with neighbours $x$ and $y$ such that $xy\not\in E(G)$. Let $G'=(G-v)\cup \{xy\}$, obviously $G'\in \mathcal{B}_{n-1}$. Since $n_{2}(G')=n_{2}(G)-1$, $n_3(G')=n_3(G)$ and $n_4(G')=n_4(G)$, we have $q(G')=2^{-1}q(G)$. If $G'\not\in\mathcal{E}$,  by the minimality of $G$ and Lemma \ref{x322}, we have
$$F(G)\geq2F(G')\geq 2q(G')=q(G),$$
a contradiction. If $G'\in \mathcal{E}$, then $G\in \{X_6,X_7\}\subseteq \mathcal{M}$, a contradiction.  This completes the proof of Claim \ref{xh-4}.
{\hfill}
\end{proof}

We continue with proving some properties which will be useful for us later on.

\begin{claim}\label{xh-5}
 $G$ has no subgraph isomorphic to $K_4$.
\end{claim}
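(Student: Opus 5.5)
We argue by contradiction in the same minimal-counterexample style as the earlier claims. Suppose $G$ contains a subgraph $K$ isomorphic to $K_4$ on $V'=\{v_1,v_2,v_3,v_4\}$. Each $v_i$ has degree at most $4$, so it has at most one neighbour outside $V'$. Let $t$ be the number of edges leaving $V'$. Since $G\neq K_4$ is connected and has no cut-edge or cut-vertex (Claims \ref{xh-2} and \ref{xh-3}), we have $t\in\{2,3,4\}$. Moreover, when $t=2$ the two outside neighbours are distinct, since otherwise that common vertex is a cut-vertex or $G$ is small.

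The plan is to contract $V'$ to a single vertex $z$, giving a graph $G'=G/V'$ of degree $t$ at $z$, with any parallel edges kept or removed according to the case. Every spanning forest of $G'$ extends to at least $F(K_4)=38$ spanning forests of $G$ by adding any spanning forest of $G[V']$. Hence $F(G)\ge 38F(G')$ whenever $G'$ is simple. We then compare $q$. Replacing four vertices of degree $4$ in $G$ by one vertex of degree $t$ gives:
\begin{itemize}
\item for $t=4$: $q(G')=2^{-3/5}198^{-6/10}q(G)$, so we need $38\ge 2^{3/5}198^{3/5}\approx 36.5$, which holds;
\item for $t=3$: $q(G')=2^{-6/5}198^{-7/10}q(G)$, so we need $38\ge 2^{6/5}198^{7/10}$, which is roughly $2.30\cdot 40.5\approx 93$ and fails;
\item for $t=2$: the required factor is larger still, so it also fails.
\end{itemize}
So the crude contraction handles only $t=4$, and the cases $t\in\{2,3\}$ need a finer comparison.

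For $t=3$, $G[V']$ has three vertices of degree $4$ and one vertex, say $v_4$, of degree $3$. Here I would instead enlarge the gadget. If the three outside neighbours $x,y,w$ of $v_1,v_2,v_3$ are distinct, I would use Lemma \ref{x876} with $H=G-V'$. I would compare $G$ with $G'$ obtained by deleting $V'$ and adding a new vertex of degree $3$ adjacent to $x,y,w$, or alternatively adding suitable edges among $x,y,w$. I would then enumerate the partitions of $\{x,y,w\}$ induced by a forest $T_0$ and check that the ratio is at least the needed constant. The counts come from contractions of $K_4$ plus pendant edges, which are small computations. If two of the outside neighbours coincide, the local structure is one of the tabulated graphs ($Y_5'$, $H_3$, $H_4$, \ldots) or forces a cut-vertex.

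For $t=2$, two vertices of $K$ have degree $3$ and two have degree $4$. Let $x,y$ be the outside neighbours. I would replace $K$ by a single edge $xy$ if $xy\notin E(G)$, or by a path of length $2$. Lemma \ref{x876} with the three partition cases ($x,y$ separated or joined in $T_0$) should give the ratio. Small leftover cases such as $Y_6$, $Y_6'$, $H_1$ and $H_2$ are covered by Table \ref{tab:eight_column}.

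The main obstacle is the case $t=3$, and also $t=2$. There we must choose the replacement graph $G'$ so that $G'$ stays in $\mathcal{B}$, avoids $\mathcal{E}$ (exceptions land in $\mathcal{M}$), and the minimum ratio over all forest types $T_0$ still beats the $q$-ratio. I would first try replacing $K$ by a single vertex of the same outer degree. If that ratio is too weak, I would retain one or two vertices of $V'$ so that the $q$-exponent drop is smaller, and redo the enumeration.
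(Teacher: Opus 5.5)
There is a genuine gap. The proposal also misplaces where the difficulty lies.

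\textbf{The distinct-neighbour cases are easy.} Your $q$-bookkeeping is off: a vertex of $V'$ with no outside neighbour has degree $3$, not $4$. Write $q(G)=2^{-9/5}198^{1/5}\prod_{v}w_{d(v)}$ with $w_2=2$, $w_3=2^{3/5}198^{1/10}$, $w_4=396^{1/5}$. Contracting $V'$ to a vertex of degree $t$, when the outside neighbours are distinct, multiplies $q$ by $w_t/(w_4^{t}w_3^{4-t})=396^{-3/5}$ for every $t\in\{2,3,4\}$, because $w_3^2=w_2w_4$. (Even in your own model the $t=3$ factor would involve $2^{-1/5}$, not $2^{-6/5}$.) Since $F(K_4)=38>396^{3/5}\approx 36.2$, the crude contraction settles all these cases at once. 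The only exceptions are $G/V'\in\mathcal{E}$, which forces $t=4$ and $G\in\{H_1,H_2\}$, not $t=2$ as you suggest. This is exactly how the paper handles the case where no outside vertex has two neighbours in $V'$. Your proposed $t=3$ replacement, a new degree-$3$ vertex on $x,y,w$, is just $G/V'$. It gives ratio exactly $38$ on forests separating $x,y,w$, so it could never reach your target of about $93$; with the correct target it already suffices.

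\textbf{The coincident-neighbour cases are missing.} More seriously, you never treat configurations where outside neighbours coincide, so that contracting $V'$ creates parallel edges. Your phrase ``whenever $G'$ is simple'' quietly excludes them, and your assertion that a coincidence yields a tabulated graph or a cut-vertex is false. For example, $t=3$ with $u_2=u_3=w\neq u_1$ and $d_G(w)\ge 3$ is a fully general configuration. After contracting and deleting the parallel edge, $w$ loses one degree and the required factor becomes $2^{-1/5}198^{4/5}\approx 59.9>38$. The paper therefore uses Lemma~\ref{x876} with two types of $T_0$, getting ratio $\min\{256/4,198/3\}=64$. For $t=4$ there are three further cases, each needing its own device:
\begin{itemize}
\item A vertex $w$ with three neighbours in $V'$: contract $V'\cup\{w\}$ to a degree-$2$ vertex, where $F(Y_5')=198$ exactly matches the drop in $q$.
\item $u_2=u_3=w$ and $u_1\neq u_4$: if $d_G(w)=2$, contract $V'\cup\{w\}$ using $F(Y_5)=128>396^{4/5}$; otherwise do a four-type enumeration with ratio at least $64$, using $F(Y_6)=431$, $F(Y_5')=198$ and $F(K_5)=291$.
\item $u_2=u_3=w$ and $u_1=u_4=w'$: if $ww'\in E(G)$, contract $V'\cup\{w,w'\}$ using $F(Y_6')=722$; otherwise replace $V'$ by the single edge $ww'$ and get ratio $431/2>198$. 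The exceptions here are $Y_6$, $Y_6'$ and $H_3$.
\end{itemize}
These cases are the substance of the proof, and your proposal gives no argument for any of them.
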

\begin{proof}[\em\textbf {Proof of Claim \ref{xh-5}}]
Suppose that $V'=\{v_1,v_2,v_3,v_4\}$ induces a $K_4$ in $G$. For $i\in [4]$, let $u_i$ be the neighbour of $v_i$ not in $V'$, if it exists. We  assume that $u_{1}$ exists because otherwise $G$ is $K_{4}\in \mathcal{M}$, a contradiction. By Claim \ref{xh-3} and $G\not\cong K_{5}$, we have either $G$ is obtained  by adding $r\in\{2,3\}$   edges between $u_{1}$ and $K_{4}$, or  $u_{2}$ exists and $u_{2}\neq u_{1}$. The former has two cases that $G\in \{Y_5,Y_5'\}\subseteq \mathcal{M}$, a contradiction.

It thus remains to deal with the latter. We define  $A$  as the maximum number of neighbours in $V'$ of a vertex in $V(G)- V'$. Since $G\not\in \mathcal{E}$, we have $A\in\{1,2,3\}$. Then we consider the following three cases.

\medskip
{\indent\bf Case 1.} $A=1$. We construct $G'$ by contracting $V'$ into a new vertex $v'$. Because $A=1$ and $u_{1}$, $u_{2}$ exist with $u_{1}\neq u_{2}$, we have $G'\in \mathcal{B}_{n-3}$. Regardless of the existence of $u_3$ and  $u_4$, we always have $q(G')=2^{-\frac{3}{5}}198^{-\frac{3}{5}}q(G)$.
Note that $G'\not\in \mathcal{E}$, for otherwise $G\in \{H_1,H_2\}\subseteq \mathcal{M}$, a contradiction.  Hence, it follows that $$F(G)\geq F(K_4)F(G')=38F(G')\geq38 \cdot2^{-\frac{3}{5}}198^{-\frac{3}{5}}q(G)> q(G),$$ a contradiction.

\medskip
{\indent\bf Case 2.} $A=3$. Without loss of generality, we assume that $u_{2}=u_{3}=u_{4}$ and call this vertex  $w$. Since $d_G(u_{1})\geq2$ and $u_1$ is not a cut-vertex by Claim \ref{xh-3}, we have $d_{G}(w)=4$. Moreover, $w$ and $u_{1}$ are not adjacent, for otherwise $G=X_6\in \mathcal{M}$, a contradiction.  Now we form $G'$ by contracting $V'\cup \{w\}$ into a new vertex $v'$ of degree $2$. So we infer that $G'\in \mathcal{B}_{n-4}\setminus \mathcal{E}$. Observe that  $F(G[V'\cup \{w\}])=F(Y_5')=198$ and $q(G')=198^{-1}q(G)$.  It follows that $$F(G)\geq F(G[V'\cup \{w\}])F(G')\geq 198\cdot 198^{-1}q(G)=q(G),$$ a contradiction.

\medskip
{\indent\bf Case 3.} $A=2$. Without loss of generality, we   assume  that $u_{2}=u_{3}$ and  call this vertex $w$. Next we consider the following three cases.

{\indent \bf Subcase 3.1.} If  $u_{4}$ does not  exist, then $d_{G}(w)\geq 3$, for otherwise $u_{1}$ is a cut-vertex,  contradicting Claim \ref{xh-3}. We form $G'$ by contracting $V'$ into a new vertex $v'$ and deleting multiple edges that occur. So  $G'\in \mathcal{B}_{n-3}\setminus \mathcal{E}$ as $d_{G'}(v')=2$, and $F(G')\geq q(G')=2^{\frac{1}{5}}198^{-\frac{4}{5}}q(G)$ (whatever the degree of $w$).  Let $G^{1}$ be a subgraph induced by the edge set $E(G[V'\cup \{w\}])\cup \{v_1u_1\}$  in $G$, and let $G^{2}$ be a subgraph induced by the edge set $\{v'u_1,v'w\}$  in $G'$.
We consider any spanning forest $T_0$ of $G'-v'=G-V'$ that can be extended to a spanning forest of $G'$ by adding (possibly $0$) edges in $\{v'u_1,v'w\}$.  We next show that the ratio between $F_{G^{1}}(G,T_0)$ and $F_{G^2}(G',T_0)$  is at least  $64$.
\begin{itemize}[itemsep=0.5ex, parsep=0.5ex] 
    \item If $u_1,w$ are in different components of $T_0$, then $F_{G^{2}}(G',T_0)=F(G^{2})=4$ and $F_{G^{1}}(G,T_0)=F(G^{1})=2F(Y_5)=256$.
    \item If $u_1,w$ are   in a same component  of $T_0$, then $F_{G^{2}}(G',T_0)=F(G^{2}/\{u_1,w\})=3$ and $F_{G^{1}}(G,T_0)=F(G^{1}/\{u_1,w\})=F(Y_5')=198$.
\end{itemize}
Thus, we conclude that $$F(G)\geq 64F(G')\geq64\cdot2^{\frac{1}{5}}198^{-\frac{4}{5}}q(G)>q(G),$$ a contradiction.

{\indent \bf Subcase 3.2.} If  $u_{4}$  exists and $u_1\neq u_4$, we first assume that $d_{G}(w)=2$.   We form $G'$ by contracting  $V'\cup\{w\}$ into a new vertex $v'$, observe that $G'\in \mathcal{B}_{n-4}\setminus \mathcal{E}$ as $d_{G'}(v')=2$. Note that $F(G')\geq q(G')=2^{-\frac{4}{5}}198^{-\frac{4}{5}}q(G)$ and  $F(G[V'\cup \{w\}])=F(Y_5)=128$. Hence, $F(G)\geq F(G[V'\cup \{w\}])F(G')\geq128\cdot2^{-\frac{4}{5}}198^{-\frac{4}{5}}q(G)>q(G)$, a contradiction. Now suppose  that $d_{G}(w)\geq3$ and  let $G'$ be  obtained from $G$ by contracting  $V'$ into a new vertex $v'$ and deleting multiple edges that occur. Note that $G'\in \mathcal{B}_{n-3}\setminus \mathcal{E}$ as $d_{G'}(v')=3$ and $F(G')\geq q(G')=2^{\frac{1}{5}}198^{-\frac{4}{5}}q(G)$ (whatever the degree of $w$). Let $G^1$ be a subgraph induced by the edge set $E(G[V'\cup \{w\}])\cup \{v_1u_1,v_4u_4\}$  in $G$,  and let $G^2$ be a subgraph induced by the edge set $\{v'u_1,v'u_4,v'w\}$  in $G'$. We consider any spanning forest $T_0$ of $G'-v'=G-V'$ that can be extended to a spanning forest of $G'$ by adding (possibly $0$) edges in $\{v'u_1,v'u_4,v'w\}$.   We next show that the ratio between $F_{G^{1}}(G,T_0)$ and $F_{G^2}(G',T_0)$  is at least $64$.
\begin{itemize}[itemsep=0.5ex, parsep=0.5ex] 
    \item Assume that $u_1,u_4$ and $w$ are in different components of $T_0$. Then $F_{G^{2}}(G',T_0)=F(G^{2})=8$ and $F_{G^{1}}(G,T_0)=F(G^{1})=4F(Y_5)=512$.
    \item Assume that $u_1$ and $u_4$ are   in a same component  of $T_0$ and $w$ is in a different one. Then $F_{G^{2}}(G',T_0)=F(G^{2}/\{u_1,u_4\})=6$ and $F_{G^{1}}(G,T_0)=F(G^{1}/\{u_1,u_4\})=F(Y_6)=431$.
        \item If $w$ and $u$ are   in a same component  of $T_0$ and $u'$ is in a different one, where $\{u,u'\}=\{u_1,u_4\}$, then assume without loss of generality that $u=u_1$ and $u'=u_4$. Then $F_{G^{2}}(G',T_0)=F(G^{2}/\{u_1,w\})=6$ and $F_{G^{1}}(G,T_0)=F(G^{1}/\{u_1,w\})=2F(Y_5')=396$.
        \item Assume that $u_1,u_4$ and $w$ are   in a same component  of $T_0$. Then $F_{G^{2}}(G',T_0)=F(G^{2}/\{u_1,u_4,w\})=4$ and $F_{G^{1}}(G,T_0)=F(G^{1}/\{u_1,u_4,w\})=F(K_5)=291$.
\end{itemize}
 Consequently, we infer  that $$F(G)\geq 64 F(G')\geq64\cdot2^{\frac{1}{5}}198^{-\frac{4}{5}}q(G)>q(G),$$ a contradiction.

{\indent \bf Subcase 3.3.} If  $u_{4}$  exists and $u_1= u_4$, then  we call this vertex $w'$. Let $H$ be the subgraph of $G$ induced by $V'\cup \{w,w'\}$. Observe that $H$ cannot be the whole graph $G$, for otherwise $G\in \{Y_6,Y_6'\}\subseteq \mathcal{M}$, a contradiction.  By  Claim \ref{xh-3}, we have $d_G(w),d_G(w')\geq3$.
If $ww'\in E(G)$, then Claim \ref{xh-3} implies that $d_G(w)=d_G(w')=4$. If $w$ and $w'$ have a common neighbour, then $G=H_3\in \mathcal{M}$, a contradiction. Now contracting $V'\cup \{w,w'\}$ into a new vertex $v'$ yields a graph $G'\in \mathcal{B}_{n-5}\setminus \mathcal{E}$ as $d_{G'}(v')=2$. Note that $F(G')\geq q(G')=2^{-\frac{1}{5}}198^{-\frac{6}{5}}q(G)$ and $F(G[V'\cup \{w,w'\}])=F(Y_6')=722$. Hence, we infer that
$$F(G)\geq F(G[V'\cup \{w,w'\}])F(G')\geq722\cdot2^{-\frac{1}{5}}198^{-\frac{6}{5}}q(G)>q(G),$$
a contradiction.

Now suppose that $ww'\not\in E(G)$. Let $G'=(G-V')\cup \{ww'\}$ and observe that $G'\in \mathcal{B}_{n-4}\setminus \mathcal{E}$ as $d_{G'}(w)\leq 3$. Note that $F(G')\geq q(G')=198^{-1}q(G)$ (whatever the degrees of $w$ and $w'$). In this case let $G^3=G[V'\cup \{w,w'\}]$.
 We consider any spanning forest $T_0$ of $G'-ww'=G-V'$ that can be extended to a spanning forest of $G'$ by adding (possibly $0$) edges in $\{ww'\}$.   We next show that the ratio between $F_{G^3}(G,T_0)$ and $F_{ww'}(G',T_0)$
is at least $\frac{431}{2}$.
\begin{itemize}[itemsep=0.5ex, parsep=0.5ex] 
    \item If $w$ and $w'$ are in different components of $T_0$, then $F_{ww'}(G',T_0)=F(ww')=2$ and $F_{G^3}(G,T_0)=F(G^3)=F(Y_6)=431$.
    \item Assume that $w$ and $w'$ are   in a same component  of $T_0$. Then there is exactly $1$ spanning forest of $G'$ that contains $T_0$,  that is, $T_0$ itself.  On the other hand, $F_{G^3}(G,T_0)=F(G^3/\{w,w'\})=F(K_5)=291$.
\end{itemize}
 Consequently, we infer  that
 $$F(G)\geq \frac{431}{2}F(G')\geq\frac{431}{2}\cdot 198^{-1}q(G)>q(G),$$
  a contradiction.
This completes the proof of Claim \ref{xh-5}.
{\hfill}
\end{proof}

Our next aim is to prove that $G$ has no subgraph isomorphic to the diamond $D_4$. Let $D_5$ be the graph obtained from the diamond $D_4$ with the edge set $\{ux,uy,vx,vy,xy\}$ by adding a new vertex $w$ adjacent only to $x$ and to $y$.  We proceed in two steps.

\begin{claim}\label{xh-6}
 $G$ has no subgraph isomorphic to $D_5$.
\end{claim}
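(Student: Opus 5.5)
Suppose $G$ contains $D_5$ on $\{u,v,w,x,y\}$ with $x,y$ adjacent to each other and to each of $u,v,w$. Then $d_G(x)=d_G(y)=4$, so $x$ and $y$ have no neighbours outside $\{u,v,w\}$. By Claim \ref{xh-5} (no $K_4$), the set $\{u,v,w\}$ is independent, since any edge among them together with $x,y$ would give a $K_4$. The configuration is therefore the induced graph $K_2\vee \overline{K_3}$, and every remaining edge leaves $S=\{u,v,w,x,y\}$ from $u$, $v$ or $w$. Each of $u,v,w$ has at most $2$ outside neighbours.

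\textbf{Degree-2 vertices.} If one of them, say $w$, has degree $2$, it lies in the triangle $wxy$, which agrees with Claim \ref{xh-4}. Since $G\ne$ the $5$-vertex graph and $G$ has no cut-vertex (Claim \ref{xh-3}), at least two of $u,v,w$ have outside neighbours.

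\textbf{Case split.} I would split according to the multiset of outside neighbours $N^*$ of $u,v,w$ (which has size at most $6$), and according to whether these coincide (analogous to the parameter $A$ in Claim \ref{xh-5}).

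\textbf{Main reduction.} Contract $S$ to a new vertex $z$, deleting multiple edges, to obtain $G'$, provided $d_{G'}(z)\in\{2,3,4\}$. Then compare $F(G)$ and $F(G')$ via Lemma \ref{x876}, with $G^1$ the edges of $G[S]$ plus the edges from $S$ to the outside, and $G^2$ the star at $z$. The target is
$$F(G)/F(G')\ \ge\ q(G)/q(G').$$
Contracting five vertices with degrees $(4,4,d_u,d_v,d_w)$ into one vertex of degree $d'$ changes $q$ by an explicit factor. For example, if $u,v,w$ each have exactly one outside neighbour, all distinct, then $d'=3$. Here $n_4$ drops by $2$, $n_3$ drops by $2$, and $n_2$ is unchanged, so the required ratio is
$$2^{-(6+2)/5}\,198^{-(2+4)/10}\ \text{inverted},\ \text{i.e. } 2^{8/5}198^{3/5}\approx 72.$$
For each partition of the attachment vertices into components of $T_0$, compute $F$ of $G^1$ with those vertices identified, divided by the corresponding count for the star. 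The worst case is all attachments in one component, giving $F(G[S]\text{ with } u,v,w \text{ plus one extra vertex joined})$ versus $1$. These are small graphs whose counts can be obtained with SageMath as in Table \ref{tab:eight_column}.

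\textbf{Deficient cases.} When $d'$ would exceed $4$ (for example, six distinct outside neighbours), I would instead contract only a subset, or delete $x,y$ and lift: remove $x,y$ and add a path/triangle structure among $u,v,w$. A cleaner alternative is to replace $S$ by a gadget, such as a single vertex adjacent to the outside neighbours, or a pair. When outside neighbours coincide (a vertex adjacent to two or three of $u,v,w$), I contract that vertex into $S$ as well, mirroring Subcases 3.1--3.3 of Claim \ref{xh-5}. Whenever $G'$ would land in $\mathcal E$, $G$ is one of the explicit graphs $H_4,H_5,H_6,\dots$ or $Z_j$ from Figure 1, which belong to $\mathcal M$. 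This is presumably why those graphs were tabulated ($H_4,H_5$ visibly contain $D_5$ on $x,y,u,v,w$).

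\textbf{Main obstacle.} The hardest step will be the many-outside-edges case, where $u,v,w$ all have degree $4$ with six attachments, so that a single contraction is impossible. There, the ratio bound must come from a lift-type reduction, with the minimum over $T_0$ computed in every partition case. I would try the construction that deletes $x,y$ and adds the triangle $uvw$, keeping all degrees, and check that the extension ratio is at least $q(G)/q(G')=2^{(2\cdot 1)/5}198^{4/10}$.
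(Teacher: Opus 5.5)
There is a genuine gap: the proposal is a plan rather than a proof. Your main reduction, contracting $S=\{u,v,w,x,y\}$ to a single vertex, forces a split over all configurations of the outside neighbours of $u,v,w$. For none of these configurations except your illustrative one do you produce the extension counts. The cases with $d_{G'}(z)\notin\{2,3,4\}$ or with coinciding outside neighbours are covered only by a menu of alternatives (contract a subset, lift, a gadget), none of which is checked.

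The heuristic you would rely on is also wrong. In your own example with distinct attachments $a,b,c$, merging them turns the star at $z$ into a triple edge with $4$ extensions, not $1$. Moreover, the minimum ratio there is attained when $a,b,c$ lie in distinct components: $648/8=81$, versus $580/6$ and $488/4$ in the other cases. That example does clear $2^{8/5}198^{3/5}\approx 72.4$, but the ``worst case'' would have to be checked partition by partition in every configuration. Finally, $G\neq D_5$ is not automatic: $D_5$ is not in Table~\ref{tab:eight_column}, so you would need to verify $F(D_5)=81>q(D_5)\approx 72.4$.

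The missing idea is that the construction you reserve for the hardest case works in every case at once, with no case analysis on the outside. Let $G'=(G-\{x,y\})+\{uv,uw,vw\}$.
\begin{itemize}
\item By Claim~\ref{xh-5}, $\{u,v,w\}$ is independent, so $G'$ is simple.
\item Each of $u,v,w$ loses two edges and gains two, so all degrees are preserved.
\item $G'$ is connected, because $x,y$ have no neighbours outside $\{u,v,w\}$, so every component of $G-\{x,y\}$ meets $\{u,v,w\}$.
\end{itemize}
Hence $q(G')=2^{-2/5}198^{-2/5}q(G)$ regardless of the rest of $G$, and $G'$ has fewer edges than $G$.

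Apply Lemma~\ref{x876} with $D_5$ against the triangle on $\{u,v,w\}$. Since $x,y$ have no outside neighbours, only the partition of $\{u,v,w\}$ induced by $T_0$ matters, giving three types:
\begin{itemize}
\item all separate: $F(D_5)/F(K_3)=81/7$;
\item two together: $47/3$;
\item all together: $23/1$.
\end{itemize}
Thus $F(G)\ge\frac{81}{7}q(G')>q(G)$, since $\frac{81}{7}\approx 11.57>396^{2/5}\approx 10.94$. The only exception is $G'\in\mathcal{E}$ (i.e.\ $K_5$ or $K_6^-$ containing the triangle $uvw$), which forces $G\in\{H_4,H_5\}\subseteq\mathcal{M}$. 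The case $G=D_5$ is absorbed automatically, since then $G'=K_3$.

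This is exactly the paper's proof, and it makes your entire outside-neighbour case split unnecessary.
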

\begin{proof}[\em\textbf {Proof of Claim \ref{xh-6}}]
 Suppose that $G$ contains a subgraph  isomorphic to $D_5$ with the edge set $\{ux,uy,vx,vy,wx,wy,xy\}$.  By Claim \ref{xh-5}, $\{u,v,w\}$ is an independent set of $G$.
We form $G'$ from $G$ by deleting the vertices $x$ and $y$, and then adding the edges $uv$, $uw$ and $vw$. Observe that $G'\in \mathcal{B}_{n-2}$ and $q(G')=2^{-\frac{2}{5}}198^{-\frac{2}{5}}q(G)$. If $G'\in \mathcal{E}$, then $G\in \{H_4,H_5\}\subseteq \mathcal{M}$, a contradiction. We assume that $G'\not\in  \mathcal{E}$ and consider any spanning forest $T_0$ of $G'-\{uv,uw,vw\}=G-\{x,y\}$ that can be extended to a spanning forest of $G'$ by adding (possibly $0$) edges in $\{uv,uw,vw\}$.  Let $D^1=G'[\{u,v,w\}]$, which is isomorphic to $K_3$. We next show that the ratio between $F_{D_5}(G,T_0)$ and $F_{D^1}(G',T_0)$ is at least $\frac{81}{7}$.
\begin{itemize}[itemsep=0.5ex, parsep=0.5ex] 
    \item Assume that $u,v$ and $w$ are in different components of $T_0$.  Then $F_{D^1}(G',T_0)=F(D^1)=F(K_3)=7$ and $F_{D_5}(G,T_0)=F(D_5)=81$.
    \item By symmetry, without loss of generality, we assume that  $u,v$ are in a same component of $T_0$  and $w$ is in a different one. Then $F_{D^1}(G',T_0)=F(D^1/\{u,v\})=3$ and $F_{D_5}(G,T_0)=F(D_5/\{u,v\})=47$.
        \item Assume that all of $u,v$ and $w$ are   in a same component  of $T_0$.  Then $F_{D^1}(G',T_0)=F(D^1/\{u,v,w\})=1$
         and $F_{D_5}(G,T_0)=F(D_5/\{u,v,w\})=23$.
\end{itemize}
 Consequently, we infer  that $$F(G)\geq \frac{81}{7}F(G')\geq\frac{81}{7}\cdot2^{-\frac{2}{5}}198^{-\frac{2}{5}}q(G)>q(G),$$ a contradiction. This completes the proof of Claim \ref{xh-6}.
{\hfill}
\end{proof}

\begin{claim}\label{xh-7}
 $G$ has no subgraph isomorphic to $D_4$.
\end{claim}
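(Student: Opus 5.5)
We argue by contradiction and follow the pattern of Claim \ref{xh-6}. Suppose $G$ contains a diamond $D_4$ with edge set $\{ux,uy,vx,vy,xy\}$. By Claim \ref{xh-5}, $uv\notin E(G)$. By Claim \ref{xh-6}, $x$ and $y$ have no third common neighbour. So $d(x),d(y)\in\{3,4\}$, and the only possible further neighbours of $x$ and $y$ lie outside $\{u,v\}$ and are not shared. We split into cases according to $(d(x),d(y))$, up to symmetry: $(3,3)$, $(3,4)$ and $(4,4)$. In each case the plan is the same. We find a smaller graph $G'\in\mathcal{B}_{n'}\setminus\mathcal{E}$ with $q(G')=c^{-1}q(G)$. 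We then show, via Lemma \ref{x876}, that every spanning forest $T_0$ of the common part $G-\{x,y\}$ extends in at least $c$ times as many ways in $G$ as in $G'$. If $G'\in\mathcal{E}$, we check that $G$ is one of the listed graphs in $\mathcal{M}$ ($H_6,H_7,H_8,Z_1,Z_2,Z_3$ presumably arise here).

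\textbf{Case $d(x)=d(y)=3$.} Then $\{u,v\}$ separates $\{x,y\}$ from the rest. Contract $\{x,y\}$ together with one of $u,v$, or delete $x,y$ and add the edge $uv$. Deleting $x,y$ and adding $uv$ (allowed since $uv\notin E$) lowers $d(u),d(v)$ by one each and removes two degree-$3$ vertices. A local count then compares the $F$-values of the diamond with $u,v$ in different or in the same component of $T_0$: these are $F(D_4)=24$ and $F(D_4/\{u,v\})=F(K_4-e\text{ contracted})$. Against $2$ and $1$ for the single edge $uv$, the ratio should be about $\min(24/2,\,F(D_4/uv)/1)$. We then compare this with $q(G)/q(G')$, which depends on the degrees of $u$ and $v$; the worst case is when both have degree $4$. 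Alternatively, we contract $\{x,y,u,v\}$ to a single vertex, as in Claim \ref{x-4}.

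\textbf{Cases where $x$ or $y$ has degree $4$.} Let $x'$ (and $y'$) denote the extra neighbours, so $x'\ne y'$. We delete $x,y$ and lift: add edges such as $ux'$, $vy'$ or $uv$, keeping degrees in $\{2,3,4\}$ and simplicity. Claim \ref{xh-5} and Claim \ref{xh-6} exclude the unwanted adjacencies, and where an adjacency is present we use the alternative pairing. We then enumerate the partitions of the boundary vertices $\{u,v,x',y'\}$ induced by $T_0$ and compute each ratio $F_{G^1}(G,T_0)/F_{G^2}(G',T_0)$ by small-graph counts, as in the proof of Theorem \ref{x2}.

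The main obstacle is the $(4,4)$ case with four boundary vertices. There the number of partitions is large (up to $15$), the bound is tight, and near-equality with $198$-type constants may force extra exceptional graphs. We would first try contracting $\{x,y\}$ into a single vertex of degree $4$. This is valid since $x,y$ have only the common neighbours $u,v$, which become double edges; those we delete. We would check whether the factor $F$ gained from the edge $xy$, roughly $\ge 2$ per forest, together with the degree change beats $q(G)/q(G')$, and fall back on the lifting construction otherwise.
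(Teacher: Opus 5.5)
\textbf{There is a genuine gap in the cases where $x$ or $y$ has degree $4$.}

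Your $(3,3)$ case is fine. Deleting $x,y$ and adding $uv$ requires a local ratio of $2^{2/5}198^{2/5}\approx 10.94$, and you get $\min(24/2,\,14/1)=12$, using $F(D_4/\{u,v\})=14$. You still have to dispose of $d(u)=2$: by Claim~\ref{xh-3} it forces $G=K_4-e$, and $F(K_4-e)=24>q(K_4-e)\approx 21.9$.

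The degree-$4$ cases are the real content of the claim, and there you give only a plan. The concrete moves you suggest do not work:
\begin{itemize}
\item \emph{Contracting $xy$ alone in case $(4,4)$.} This lowers $d(u)$ and $d(v)$ by one, so $q(G')=2^{3/5}198^{-2/5}q(G)$ and you need a local ratio of about $5.47$. But if $u,v,x',y'$ lie in one component of $T_0$, there are $23$ extensions in $G$ against $5$ in $G'$, a ratio of only $4.6$. If $d(u)=2$, the contraction even creates a vertex of degree $1$.
\item \emph{Deleting both $x,y$ and adding two edges such as $ux',vy'$.} This needs a ratio of about $2^{-2/5}198^{3/5}\approx 18.1$. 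The partition $\{u,y'\},\{v,x'\}$ gives only $48/3=16$.
\item \emph{The ``alternative pairing''.} Claims~\ref{xh-5} and~\ref{xh-6} do not exclude $ux'\in E(G)$, so this fallback needs its own argument, which you do not give.
\end{itemize}

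\textbf{The missing idea} is to contract $xy$ into $z$ and, at the same time, add the edge $uv$; this is legal since $uv\notin E(G)$ by Claim~\ref{xh-5}.
\begin{itemize}
\item This keeps $d(u)$ and $d(v)$ unchanged and gives $d(z)=d(x)+d(y)-4\in\{2,3,4\}$. Hence $q(G')=2^{-1/5}198^{-1/5}q(G)$ in every degree case, and no case split is needed.
\item Since $x'\neq y'$, we have $G'-\{uv,uz,vz\}=(G-E(D_4))/\{x,y\}$. By Lemma~\ref{x11}, its spanning forests $T_0$ correspond bijectively to the forests $T_0'$ of $G-E(D_4)$ that separate $x$ and $y$. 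Counting only those forests of $G$ gives a lower bound.
\item Compare extensions of $T_0'$ by $E(D_4)$ with extensions of $T_0$ by the triangle $uvz$. The four possible partitions give ratios $24/7$, $14/3$, $10/3$ and $4$. The minimum, $10/3$, exceeds $2^{1/5}198^{1/5}\approx 3.31$, which is what is needed.
\item The only exceptions, $G'\in\mathcal{E}$, give $G\in\{K_6^{-},H_6\}$. The graphs $H_7,H_8,Z_j$ do not arise here; they belong to Claim~\ref{xh-8}.
\end{itemize}
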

\begin{proof}[\em\textbf {Proof of Claim \ref{xh-7}}]
 Suppose that $G$ contains a subgraph isomorphic to $D_4$ with the edge set $\{ux,uy,vx,vy,xy\}$.  By Claim \ref{xh-6}, we obtain $N_G(x)\cap N_G(y)=\{u,v\}$. Let $G'$ be obtained from $G\cup\{uv\}$ by  contracting the edge $xy$ into a new vertex $z$ and deleting multiple edges that occur,  so $\{u,v,z\}$ induces a triangle in $G'$. Observe that $G'\in \mathcal{B}_{n-1}$ and $q(G')=2^{-\frac{1}{5}}198^{-\frac{1}{5}}q(G)$. If $G'\in \mathcal{E}$, then $G=K_6^{-}\in\mathcal{ E}$ or $G=H_{6}\in \mathcal{M}$, a contradiction.  Now assume that $G'\not\in  \mathcal{E}$. We consider any spanning forest $T_0$ of $G'-\{uv,uz,vz\}$  that can be extended to a spanning forest of $G'$ by adding (possibly $0$) edges in $\{uv,uz,vz\}$. Note that $T_0$ corresponds to a spanning forest $T_0'$ in $G$ which separates $x$ and $y$.  Let $D^1=G'[\{u,v,z\}]$. We next show that the ratio between $F_{D_4}(G,T_0')$ and $F_{D^1}(G',T_0)$ is at least $\frac{10}{3}$.
\begin{itemize}[itemsep=0.5ex, parsep=0.5ex] 
   \item If $u,v$ and $z$ are in different components of $T_0$, then $u,v,x$ and $y$ are  in different components of $T_0'$. Then $F_{D^1}(G',T_0)=F(D^1)=7$ and $F_{D_4}(G,T_0')=F(D_4)=24$.
    \item If $u$ and $v$ are   in a same component  of $T_0$ and $z$ is in a different one, then $u$ and $v$ are in a same component  of $T_0'$ and each of  $x$ and $y$ is in  a different one, respectively.
          Hence, $F_{D^1}(G',T_0)=F(D^1/\{u,v\})=3$ and $F_{D_4}(G,T_0')=F(D_4/\{u,v\})=14$.
        \item If $z$ is in the same component of $T_0$ as exactly one of $u$ and $v$, say $u$ by symmetry, then, without loss of generality,  assume that $u$ and $x$ are in a same component  of $T_0'$ and  $v,y$ are in two other different components.  Hence, $F_{D^1}(G',T_0)=F(D^1/\{u,z\})=3$ and $F_{D_4}(G,T_0')=F(D_4/\{u,x\})=10$.
        \item If all of $u,v$ and $z$ are   in a same component  of $T_0$, then, without loss of generality,  assume that $u,v$ and $x$ are in a same component  of $T_0'$ and $y$ is in a different one. Then $F_{D^1}(G',T_0)=F(D^1/\{u,v,z\})=1$ and $F_{D_4}(G,T_0')=F(D_4/\{u,v,x\})=4$.
\end{itemize}
 Consequently, we infer  that $$F(G)\geq \frac{10}{3}F(G')\geq\frac{10}{3}\cdot2^{-\frac{1}{5}}198^{-\frac{1}{5}}q(G)>q(G),$$ a contradiction. This completes the proof of Claim \ref{xh-7}.
{\hfill}
\end{proof}

\begin{claim}\label{xh-8}
 $G$ does not contain a triangle.
\end{claim}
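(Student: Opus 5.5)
The plan is a minimal-counterexample reduction in the style of Claims \ref{xh-5}--\ref{xh-7}. Suppose $\{x,y,z\}$ induces a triangle in $G$. By Claim \ref{xh-7} no edge lies in two triangles, since a second triangle through an edge would produce a $D_4$. Hence no vertex outside $\{x,y,z\}$ has two neighbours in the triangle, and the outside neighbours of $x,y,z$ are pairwise distinct. Let $s=(d(x)-2)+(d(y)-2)+(d(z)-2)$ be the number of edges leaving the triangle. By Claims \ref{xh-2} and \ref{xh-3} we have $s\ge 2$, and trivially $s\le 6$. Write $w_2=2$, $w_3=2^{3/5}198^{1/10}$ and $w_4=2^{1/5}198^{1/5}$ for the per-vertex factors of $q$, so that $q(G)/q(G')$ is a ratio of products of $w_i$'s.

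\textbf{Case $s\le 4$.} Contract $\{x,y,z\}$ into a single vertex $t$ of degree $s$. Because the outside neighbours are distinct, no multiple edges arise, so $G'\in\mathcal{B}_{n-2}$. Each spanning forest of $G'$ extends in at least $F(K_3)=7$ ways, so $F(G)\ge 7F(G')$.

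\begin{itemize}
\item For the possible degree patterns $(3,3,2)$, $(3,3,3)$, $(4,2,2)$, $(4,3,2)$, $(4,3,3)$ and $(4,4,2)$, the ratio $q(G)/q(G')=w_{d(x)}w_{d(y)}w_{d(z)}/w_s$ is at most $w_3^2=2w_4=2^{6/5}198^{1/5}\approx 6.6<7$, and smaller patterns give even smaller ratios.
\item Minimality then gives $F(G)>q(G)$, which is a contradiction.
\item I must still exclude $G'\in\mathcal{E}$. In that case $G$ is obtained from $K_5$ or $K_6^-$ by blowing up a vertex into a triangle, and I expect these to be exactly the graphs $H_7$, $H_8$ (and relatives) in Table \ref{tab:eight_column}, all of which lie in $\mathcal{M}$.
\end{itemize}

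\textbf{Case $s\in\{5,6\}$.} Here the degree pattern is $(4,4,3)$ or $(4,4,4)$, and contraction would create a vertex of degree greater than $4$. I will instead delete the triangle and reattach its outside neighbours by new edges, as in the proof of Theorem \ref{x2} and Claim \ref{xh-6}.

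\begin{itemize}
\item For $(4,4,4)$, let $x_1,x_2,y_1,y_2,z_1,z_2$ be the outside neighbours. Set $G'=G-\{x,y,z\}+\{x_1y_1,y_2z_1,z_2x_2\}$, choosing the pairing so that the added edges are non-edges of $G$. This is possible because the absence of $D_4$ and $K_4$ leaves enough freedom to pick a perfect matching in the complement among the six neighbours.
\item For $(4,4,3)$, delete $x,y,z$ and add two edges among four of the five neighbours. The fifth neighbour loses one degree, and I check it stays in $\{2,3\}$ and is not a cut vertex. If that fails, I fall back to contracting $xy$ into a degree-$4$ vertex and bounding via Lemma \ref{x876}.
\end{itemize}

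Degrees of outside vertices are preserved in both reductions, so $q(G)/q(G')=w_4^3\approx 36.2$ in the first case and $w_4^2w_3/w_2$ in the second. I then apply Lemma \ref{x876}: for every partition pattern of the outside neighbours into components of a spanning forest $T_0$ of $G-\{x,y,z\}$, I compute $F_{G^1}(G,T_0)/F_{G^2}(G',T_0)$ via Remark \ref{xx234}. Each such ratio is a count in the triangle-plus-pendant-edges graph with some leaves identified, divided by a count in a matching with some endpoints identified, and I will verify each is at least $w_4^3$ (respectively the second ratio). The degenerate possibilities $G'\in\mathcal{E}$ should give $G\in\{Z_1,Z_2,Z_3\}\subseteq\mathcal{M}$, since these are $9$-vertex graphs containing a triangle $xyz$.

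The main obstacle is the $(4,4,4)$ subcase. It requires enumerating all set partitions of six labelled neighbours up to symmetry and computing many small forest counts. The margin over $36.2$ may be thin for patterns where the matched pairs are already connected in $T_0$, since then $F_{G^2}$ is small but $F_{G^1}$ also drops. If some pattern fails, I would change the pairing (the choice of which neighbours of different triangle vertices to join), or replace the triangle by a single new edge plus a degree-$2$ vertex, which changes the $q$-ratio while keeping degrees legal.
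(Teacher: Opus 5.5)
The genuine gap is the case $s\in\{5,6\}$: a single comparison graph $G'$ combined with Lemma~\ref{x876} is not strong enough. In the $(4,4,4)$ case with your pairing $x_1y_1,\,y_2z_1,\,z_2x_2$, every ratio must be at least $q(G)/q(G')=w_4^3=396^{3/5}\approx 36.19$. Take a spanning forest $T_0$ of $G-\{x,y,z\}$ in which $x_1,x_2,y_2$ lie in one component and $y_1,z_1,z_2$ lie in three further components; nothing proved so far rules this out. The three new edges then form a star, so $F_{G^2}(G',T_0)=8$. Meanwhile $G^1$ contracts to the triangle $xyz$ plus a vertex joined to $x$ by two parallel edges and to $y$ by one edge, together with three pendant edges. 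This gives $8\cdot 34=272$ forests, where $34=F(D_4)+10$ by deletion--contraction of a parallel edge. The ratio is $34<36.19$.

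This is not an artefact of your particular pairing. Whenever the two outer neighbours of some triangle vertex are not matched to each other, put both of them in one component together with an endpoint of the remaining new edge; this gives the same ratio $34$. The same happens in the $(4,4,3)$ case: the pairing $x_1y_1,x_2y_2$ with $x_1,x_2,w$ in one component gives $136/4=34$. There your target is also miscomputed. The fifth neighbour contributes $w_d/w_{d-1}=w_3/w_2$, so the correct target is again $w_4^3$, not $w_4^2w_3/w_2$. Also, contracting $xy$ yields a vertex of degree at least $5$, not $4$. Pairing each triangle vertex's two outer neighbours with each other avoids this configuration, but it is impossible as soon as, say, $x_1x_2\in E(G)$. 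Moreover, a perfect matching in the complement of $G[N]$ can fail to exist even without $K_4$ or $D_4$; this is exactly the situation of $H_7$ and $H_8$, where $G[N]\cong K_{3,3}$.

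The missing idea is to split the spanning forests of $G$ by how many triangle edges they contain, and to bound each class by a \emph{different} smaller graph. In the $(4,4,4)$ case:
\begin{itemize}
\item Forests with no triangle edge are the forests of $G-\{xy,yz,zx\}$. This graph is connected by Claim~\ref{xh-3} and has $q$-ratio $2^{12/5}198^{-3/5}$.
\item Forests with exactly one triangle edge are counted by contracting that edge after deleting the other two. This gives three graphs, each with $q$-ratio $2^{3/5}198^{-2/5}$.
\item Forests with exactly two triangle edges number $3F(G/\{x,y,z\})$. This is bounded via a complete lift of the resulting degree-$6$ vertex and Theorem~\ref{x322} with $\ell_3=27/7$, with $H_7,H_8,Z_1,Z_2,Z_3$ as the exceptions.
\end{itemize}
These contributions sum to only about $1.09\,q(G)$, so the margin is thin. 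The $(4,4,3)$ case uses the analogous split with $G-z-xy$, $(G-z)/xy$, $G_{xz}$ and $G_{zy}$.

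Your case $s\le 4$ is the paper's Case~1; in fact the ratio is exactly $2^{6/5}198^{1/5}<7$ for every degree pattern. However, you misidentify the exceptional case there. $G'\in\mathcal{E}$ forces $|V(G)|\in\{7,8\}$, whereas $H_7$ and $H_8$ have $9$ vertices. The correct exclusion is as follows. If $G'=K_5$, the four vertices off the triangle induce a $K_4$, contradicting Claim~\ref{xh-5}. If $G'=K_6^{-}$, then $G$ contains $K_6^{-}-t\supseteq D_4$, contradicting Claim~\ref{xh-7}.
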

\begin{proof}[\em\textbf {Proof of Claim \ref{xh-8}}]
  Suppose that $V'=\{x,y,z\}$ induces a triangle in $G$.  Claim \ref{xh-7} implies that every vertex of $G$ not in $V'$ has at most $1$ neighbour in $V'$. The following three cases depend on  the degree sum of $x$, $y$ and $z$.

\medskip
{\indent\bf Case 1.} $d_G(x)+d_G(y)+d_G(z)\leq10$. By Claim \ref{xh-3} and $n\geq4$, we have $2\leq|\left(N_G(x)\cup N_G(y)   \cup N_G(z)\right)\setminus V'|\leq 4$. Now we form $G'$ from $G$ by contracting $V'$ into a new vertex $v'$ and deleting multiple edges that occur, then $G'\in \mathcal{B}_{n-2}$. Regardless of the  degrees of vertices in $V'$, we always have $q(G')=2^{-\frac{6}{5}}198^{-\frac{1}{5}}q(G)$. If $G'\not\in \mathcal{E}$, then
$$F(G)\geq F(K_3)F(G')\geq7q(G')=7\cdot2^{-\frac{6}{5}}198^{-\frac{1}{5}}q(G)>q(G),$$
 a contradiction. If $G'\in\mathcal{ E}$, then this contradicts  Claims \ref{xh-5} and \ref{xh-7}. Indeed, if $G'=K_5$, let $a,b,c,d$ be the $4$ vertices of $G$ outside $V'$, necessarily $G[\{a,b,c,d\}]\cong K_4$; if $G'=K_6^{-}$, then $G'-v'=K_6^{-}-v'$ is a subgraph of $G$ which contains a copy of $D_4$.

\medskip
{\indent\bf Case 2.} $d_G(x)+d_G(y)+d_G(z)=11$.
Without loss of generality, assume that  $d_G(x)=d_G(y)=4$ and  $d_G(z)=3$.
Let $G_0=G-z$ and $w\in N_G(z)\setminus V'$.
By Claim $\ref{xh-7}$, the vertex $w$ is adjacent to neither $x$ nor $y$, since otherwise
$G$ would contain a subgraph isomorphic to $D_4$. If $d_G(w)=2$, then $w$ is contained in a triangle by
Claim $\ref{xh-4}$. Since $N_G(z)=\{x,y,w\}$, this
would force $w$ to be adjacent to $x$ or $y$, a contradiction. Hence, $d_G(w)\ge 3$.

Let $G_0'=G_0-xy$ and $ G_0''=G_0/xy$. We first show that $G_0'$ is connected. Since $z$ is not a cut-vertex of $G$,
the graph $G_0=G-z$ is connected. If $G_0'$ were disconnected, then $xy$
would be a cut-edge of $G_0$. Let $L_x$ and $L_y$ be the two components of
$G_0'$ with $x\in L_x$ and $y\in L_y$. Since $d_G(x)=d_G(y)=4$, both
$L_x-\{x\}$ and $L_y-\{y\}$ are nonempty. If $w\in L_x$, then $y$ is a cut-vertex
of $G$, and if $w\in L_y$, then $x$ is a cut-vertex of $G$. In either case, we get a
contradiction to Claim $\ref{xh-3}$. Thus $G_0'$ is connected. Then $G'_0\in \mathcal{B}_{n-1}\setminus \mathcal E$ and
$G''_0\in \mathcal{B}_{n-2}\setminus \mathcal E$ since $2\leq d_{G_0'}(w)=d_{G_0''}(w)=d_{G_0}(w)\leq3$. Moreover,
$q(G'_0)=2^{\frac{7}{5}}198^{-\frac{3}{5}}q(G)$ and $q(G''_0)=2^{-\frac{2}{5}}198^{-\frac{2}{5}}q(G)$.

Let $G_{xz}$ be the graph obtained from $G-\{xy,zy\}$ by contracting
the edge $xz$, and let $G_{zy}$ be the graph obtained from
$G-\{xy,xz\}$ by contracting the edge $zy$. Observe that
$G_{xz},G_{zy}\in \mathcal{B}_{n-1}\setminus \mathcal E$ since $G$ contains no $D_4$ and
$q(G_{xz})=q(G_{zy})=2^{\frac{3}{5}}198^{-\frac{2}{5}}q(G)$.

Now we count spanning forests of $G$ according to the number of edges
which belong to $\{xy,xz,zy\}$. First, consider spanning forests containing no edge from $\{xy,xz,zy\}$.
Each spanning forest of $G'_0$ gives two such spanning forests of $G$, according
to whether the edge $zw$ is added or not. Hence the number of such spanning
forests is at least
$2F(G'_0)$. Second, consider spanning forests containing exactly one edge from
$\{xy,xz,zy\}$. If this edge is $xy$, then we start with a spanning forest
of $G'_0$ in which $x$ and $y$ lie in different components. By Lemma
\ref{x11}, the number of such spanning forests is $F(G''_0)$. For each
of them, after reinserting $z$ and adding the edge $xy$, the vertex $z$
can again either be left isolated or joined to $w$ by the edge $zw$.
This gives at least
$2F(G''_0)$
spanning forests of $G$.  If the
unique edge is $xz$ or $zy$, then contracting this prescribed edge gives
at least $F(G_{xz})$ or $F(G_{zy})$ spanning forests, respectively.
Therefore, the number of spanning forests containing exactly one edge from
$\{xy,xz,zy\}$ is at least
$
2F(G''_0)+F(G_{xz})+F(G_{zy})$. Third, consider spanning forests containing exactly two edges from
$\{xy,xz,zy\}$. Start with a spanning forest of $G'_0$ in which $x$ and
$y$ lie in different components. By Lemma \ref{x11}, there are exactly
$F(G''_0)$ such spanning forests. For each of them, reinsert the vertex
$z$, add no edge $zw$, and add an one of the three pairs
$
\{xy,xz\},\{xy,zy\},\{xz,zy\}
$.
Since $x$ and $y$ lie in different components before these two triangle
edges are added, no cycle is created. Hence we obtain at least
$3F(G''_0)$
spanning forests of this type.

The above three families are pairwise disjoint. Consequently,
\[
\begin{aligned}
F(G)
&\ge 2F(G'_0)+5F(G''_0)+F(G_{xz})+F(G_{zy}) \\
&\ge
\left(
2\cdot 2^{\frac{7}{5}}198^{-\frac{3}{5}}
+5\cdot 2^{-\frac{2}{5}}198^{-\frac{2}{5}}
+2\cdot 2^{\frac{3}{5}}198^{-\frac{2}{5}}
\right)q(G) \\
&> q(G),
\end{aligned}
\]
a contradiction.

\medskip
{\indent\bf Case 3.} $d_G(x)+d_G(y)+d_G(z)=12$. Then $d_G(x)=d_G(y)=d_G(z)=4$. Now we consider the following three types of spanning forests of $G$.

{\indent \bf  Type \uppercase\expandafter{\romannumeral 1}.} 
The spanning forest  of $G$ contains no edge in $\{xy,zx,yz\}$. Let $G_{xyz}=G-\{xy,zx,yz\}$. We first show that $G_{xyz}$ is connected. Indeed, otherwise some component of
 $G_{xyz}$ contains exactly one vertex $v$ of $V'$. Since
$d_G(v)=4$, this component contains a vertex outside $V'$, and hence deleting
$v$ disconnects $G$, contradicting Claim \ref{xh-3}. Then $G_{xyz}\in \mathcal{B}_n\setminus \mathcal{E}$ since $d_{G_{xyz}}(x)=2$
and  $q(G_{xyz})=2^{12/5}198^{-3/5}q(G)$. By the minimality of $G$, we have
$
F(G_{xyz})\ge q(G_{xyz})
=
2^{12/5}198^{-3/5}q(G)$. On the other hand, every spanning forest of $G_{xyz}$ is exactly a spanning forest of $G$ containing
no edge in $\{xy,zx,yz\}$. Then the number of such spanning forests is at least
$
2^{12/5}198^{-3/5}q(G)$.

{\indent \bf  Type \uppercase\expandafter{\romannumeral 2}.} 
The spanning forest  of $G$ contains exactly one edge in $\{xy,zx,yz\}$. Let $G_{xy}$ be  obtained from $G-\{zx,zy\}$ by contracting the edge $xy$ into a new vertex. Then $G_{xy}\in\mathcal{B}_{n-1}\setminus \mathcal{E}$ since $G$ has no cut-vertex and $d_{G_{xy}}(z)=2$.  Using an analogous definition where we contract $zx$ or $yz$ instead of $xy$, we  get $G_{zx}$ and $G_{yz}$ which both belong to $\mathcal{B}_{n-1}\setminus \mathcal{E}$. Note that $q(G^{*})=2^{\frac{3}{5}}198^{-\frac{2}{5}}q(G)$ for $G^{*}\in\{G_{xy},G_{zx},G_{yz}\}$. The total number of such spanning forests of $G$ is at least $F(G_{xy})+F(G_{zx})+F(G_{yz})\geq 3\cdot2^{\frac{3}{5}}198^{-\frac{2}{5}}q(G)$.

{\indent \bf  Type \uppercase\expandafter{\romannumeral 3}.} 
The spanning forest  of $G$ contains exactly two edges in $\{xy,zx,yz\}$. Let $G'$ be obtained from $G$ by contracting $V'$ into a new vertex $u$ with degree $6$. Let $N=\left(N_G(x)\cup N_G(y) \cup N_G(z)\right)\setminus\{x,y,z\}$. We first claim that $G'$ can be performed a complete lift of $u$, unless $G$ has precisely $9$ vertices and $G[N]\cong K_{3,3}$. Indeed,  $G[N]$ has maximum degree  at most $3$, and if $|V(G)|>9$, then  $G[N]$ has at least $2$ vertices with degree at most $2$  because $G$ has no cut-vertex.   Since $G$ contains no  $D_4$ by Claim \ref{xh-7},  the complement of $G[N]$ has a perfect matching unless $G$ has $9$ vertices and $G[N]\cong K_{3,3}$.   Furthermore, we obtain that $G\in \{H_7,H_8\}\subseteq \mathcal{M}$ if $G[N]\cong K_{3,3}$, a contradiction.  Then $u$ can be completely lifted in $G'$, which amounts to deleting $u$ and adding  a perfect matching between its neighbours.  Since none of  $x$, $y$ and $z$ is a cut-vertex in $G$, the graph $G'-u$ has at most $3$ components. By adding a suitable  perfect matching in the complement of  $G[N]$,  there must exist a complete lift of $u$ in $G'$ yielding a connected graph $G'_u$.   Note that $G'_u\in \mathcal{B}_{n-3}$ and $q(G_u')=2^{-\frac{3}{5}}198^{-\frac{3}{5}}q(G)$. If $G'_u\in \mathcal{E}$, then $G'_u=K_6^-$ since $|V(G'_u)|\geq 6$. Furthermore, $G[N]$ is isomorphic to the graph obtained from $K_{6}^-$ by deleting a perfect matching, that is, $3$-prism. Since $G$ has no $D_4$ by Claim \ref{xh-7}, up to isomorphism, we deduce that $G\in \{Z_1,Z_2,Z_3\}\subseteq \mathcal{M}$, a contradiction. Now assume that $G'_u\not\in \mathcal{E}$. By Lemma \ref{x322}, $F(G')\geq\frac{27}{7}F(G_u')\geq \frac{27}{7}q(G_u')=\frac{27}{7}\cdot2^{-\frac{3}{5}}198^{-\frac{3}{5}}q(G)$. Observe that every spanning forest of $G'$ yields $3$ different spanning forests  of $G$ containing  exactly two edges in $\{xy,zx,yz\}$, thus yielding at least $\frac{81}{7}\cdot2^{-\frac{3}{5}}198^{-\frac{3}{5}}q(G)$ different spanning forests of $G$, each containing precisely $2$ edges in $\{xy,zx,yz\}$.

In total, we have constructed $$\left(
2^{12/5}198^{-3/5}
+
3\cdot 2^{3/5}198^{-2/5}
+
\frac{81}{7}\cdot 2^{-3/5}198^{-3/5}
\right)q(G)$$ different spanning forests of $G$, which is more than $q(G)$. This completes the proof.
{\hfill}
\end{proof}

\begin{claim}\label{xh-9}
 If $x\in V(G)$ with $d_G(x)=4$, then $G[N_G(x)]$ is not empty.
\end{claim}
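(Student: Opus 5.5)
The plan is to argue by contradiction. Suppose $x$ has degree $4$ and $N_G(x)=\{a,b,c,d\}$ is an independent set. Then the complement of $G[N_G(x)]$ is $K_4$, so it has a perfect matching. Hence a complete lift of $x$ (lifting two pairs, say $\{xa,xb\}$ and $\{xc,xd\}$, adding edges $ab$ and $cd$) always produces a simple graph $G_x$.

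First I check that some choice of lift keeps $G_x$ connected. By Claim \ref{xh-3}, $x$ is not a cut-vertex, so $G-x$ is connected. Every lift adds two edges to $G-x$, and so $G_x$ is connected for any of the three pairings. This is consistent with Lemma \ref{x354}, which needs at most $m+1=3$ components. The degrees of all vertices are preserved, so $G_x\in\mathcal{B}_{n-1}$ with $n_i(G_x)=n_i(G)$ for $i=2,3$ and $n_4(G_x)=n_4(G)-1$. Thus
\[
q(G_x)=2^{-\frac{1}{5}}198^{-\frac{1}{5}}q(G).
\]
By Theorem \ref{x322} with $m=2$ and $\ell_2=3$, we get $F(G)\ge 3F(G_x)$. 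If $G_x\notin\mathcal{E}$, minimality gives
\[
F(G)\ge 3\cdot 2^{-\frac{1}{5}}198^{-\frac{1}{5}}q(G).
\]
Since $3^5=243$ and $2\cdot198=396$, this constant is below $1$, so the generic lift alone is not enough. This is the main obstacle.

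To overcome it, I would redo the local comparison via Lemma \ref{x876} rather than use the worst case $\ell_2$. Let $G^1$ be the star at $x$ and $G^2=\{ab,cd\}$. For a spanning forest $T_0$ of $G-x$, the ratio $F_{G^1}(G,T_0)/F_{G^2}(G_x,T_0)$ equals $\prod(d_i+1)/F(H)$, where $H$ is the contracted multigraph. This ratio equals $3$ only when $a,b$ lie in one component and $c,d$ in another; other partitions give larger ratios, for example $16/4=4$ when all four are separate. I would try to exploit the freedom of choosing the pairing among the three perfect matchings of $K_4$. Averaging $F(G_x)$ over the three lifts is natural. For a fixed $T_0$ and fixed partition of $\{a,b,c,d\}$ into components, the sum over the three pairings of $F_{G^2}$ should be compared with $3\cdot\prod(d_i+1)$.

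I expect $\sum_{\text{pairings}}F_{G^2}\le \tfrac{3}{c}\prod(d_i+1)$ with $c$ noticeably above $3$. The partition $\{ab\},\{cd\}$ gives counts $1,4,4$, averaging $3$, so $c=5$ there. All separate gives $4,4,4$ and ratio $4$. Singletons plus a pair give ratio $\ge 12/\tfrac{8}{3}$. So some pairing yields $F(G)\ge c\,F(G_x)$ with $c\ge 4$, which beats $(396)^{1/5}\approx3.31$. The averaging argument works because $F(G)$ is compared against $\tfrac13\sum_{\text{pairings}}F(G_{x})$, and one of the three is at most the average. I would verify the smallest partition ratio, which I expect to be $4$ or $9/2$, by the same finite case check used in Claims \ref{xh-6} and \ref{xh-7}.

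Finally, exceptional lifts with $G_x\in\mathcal{E}$ must be handled. Since $G$ has no triangle by Claim \ref{xh-8}, $G_x$ has at most two triangles, each through a new edge $ab$ or $cd$. $K_5$ and $K_6^-$ have many more triangles, so $G_x\in\mathcal{E}$ is impossible. Hence every lift is admissible, and minimality gives $F(G)\ge 4\cdot 2^{-\frac{1}{5}}198^{-\frac{1}{5}}q(G)>q(G)$, a contradiction.
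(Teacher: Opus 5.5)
Your strategy is the paper's: assume $N_G(x)=\{a,b,c,d\}$ is independent and form all three lifts $G_1,G_2,G_3$, with matchings $\{ab,cd\},\{ac,bd\},\{ad,bc\}$. Then sort spanning forests of $G-x$ by the partition they induce on $\{a,b,c,d\}$, and compare the number of extensions in $G$ with the \emph{sum} of the numbers of extensions in $G_1,G_2,G_3$. You also correctly see why a single lift with $\ell_2=3$ is not enough ($3^5<396$).

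However, several of the case numbers you quote are wrong, and the constant $c\ge 4$ is false. So the final inequality $F(G)\ge 4\cdot 2^{-1/5}198^{-1/5}q(G)$ is not justified as written. Below, "ratio" means the count in $G$ divided by the average of the three counts in $G_1,G_2,G_3$.
\begin{itemize}
\item For $\{a,b\},\{c,d\}$ the counts in $G_1,G_2,G_3$ are $1,3,3$, not $1,4,4$. The lifted edges $ac,bd$ become a double edge between the two contracted vertices, which has $3$ spanning forests. The ratio is $9/(7/3)=27/7$.
\item For a pair plus two singletons, e.g.\ $\{a,b\},\{c\},\{d\}$, the counts are $2,4,4$. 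The ratio is $12/(10/3)=18/5$, not $12/(8/3)$.
\item The remaining types give $16/4=4$, $8/2=4$ and $5/1=5$.
\end{itemize}
So the true minimum is $18/5$, attained on the six ``one pair'' types. This is exactly the paper's inequality $\lambda_i\ge\frac{6}{5}(\lambda_{i,1}+\lambda_{i,2}+\lambda_{i,3})$. Since $18/5=3.6>396^{1/5}\approx 3.31$, the argument still closes: $F(G)\ge \frac{6}{5}\sum_j F(G_j)\ge\frac{18}{5}\cdot2^{-1/5}198^{-1/5}q(G)>q(G)$. You need to replace $4$ by $18/5$ and actually carry out the finite check rather than predict its outcome. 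Relatedly, for the single lift $\{ab,cd\}$ the ratio $3$ occurs for crossing partitions such as $\{a,c\},\{b,d\}$ or $\{a,c\},\{b\},\{d\}$; for $\{a,b\},\{c,d\}$ it is $9$.

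Your exclusion of $G_j\in\mathcal{E}$ also contains a false step. $G_j$ need not have at most two triangles: $a$ and $b$ can have up to three common neighbours in $G-x$, so $ab$ may lie in three triangles. The correct bound is six (no triangle uses both new edges, and $G-x$ is triangle-free). Six is still below the $8$ triangles of $K_6^-$ and the $10$ of $K_5$, so your conclusion survives. The paper's reason is simpler: in each $G_j$ the set $\{a,b,c,d\}$ spans exactly two edges, whereas every four vertices of $K_5$ or $K_6^-$ span at least four edges.
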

\begin{proof}[\em\textbf {Proof of Claim \ref{xh-9}}]
Suppose that $N_G(x)=\{a,b,c,d\}$ induces no edge in $G$. Then we can perform a complete lift of $x$ in $G$, and it amounts to deleting $x$ and adding the edges of a perfect matching between its neighbours.
We consider $3$ graphs that may be formed by doing a complete lift at $x$: from $G-x$, we obtain $G_1$ by adding the edges in $M_1=\{ab,cd\}$; $G_2$ by adding the edges in $M_2=\{ac,bd\}$; and $G_3$ by adding the edges in $M_3=\{ad,bc\}$.
As $x$ is not a cut-vertex of $G$ by Claim \ref{xh-3}, we have $G_1,G_2,G_3\in \mathcal{B}_{n-1}$. Since $G[\{a,b,c,d\}]$ is empty,  $G_1,G_2,G_3\not\in \mathcal{E}$. Note that $F(G_j)\geq q(G_j)=2^{-\frac{1}{5}}198^{-\frac{1}{5}}q(G)$ for each $j\in [3]$.

Now we consider all spanning forests   of $G_j-M_j=G-x$  that can be extended to a spanning forest of $G_j$ by adding (possibly $0$) edges in $M_j$, where $j\in [3]$. We classify in Table \ref{tab2} all these spanning forests into $15$ types, regarding which vertices among $a,b,c,d$ belong to the same component, really only $5$ types up to the symmetry of the roles of these vertices. For each such type, we calculate the number of ways each corresponding spanning forest may be extended into a spanning forest of $G$, $G_1$, $G_2$ and $G_3$ by using only edges in $\{xa,xb,xc,xd\}$, $M_1$, $M_2$ and $M_3$, respectively.

For each $i\in [15]$ and $j\in [3]$, let $t_i$ be the number of spanning  forests of type $i$, let  $\lambda_i$ be the number of ways to extend a spanning forest of type $i$ to a spanning forest of $G$ and let  $\lambda_{i,j}$ be the number of ways to extend a spanning forest of type $i$ to a spanning forest of $G_j$. Then
\begin{equation}\label{v22}
\begin{split}
F(G)=\sum_{i=1}^{15}\lambda_i t_i~~~~~~{\rm and }~~~~~F(G_j)=\sum_{i=1}^{15}\lambda_{i,j}t_i,~~\forall j\in [3].
\end{split}
\end{equation}
We further observe from Table \ref{tab2} that  $\lambda_i\geq\frac{6}{5}(\lambda_{i,1}+\lambda_{i,2}+\lambda_{i,3})$ for any $i\in[15]$. Then \eqref{v22} implies that $F(G)\geq \frac{6}{5}\left(F(G_1)+F(G_2)+F(G_3)\right)$. Recall that $F(G_j)\geq q(G_j)=2^{-\frac{1}{5}}198^{-\frac{1}{5}}q(G)$ for each $j\in [3]$. Therefore, we have
$$F(G)\geq 3\cdot \frac{6}{5}\cdot2^{-\frac{1}{5}}198^{-\frac{1}{5}}q(G)> q(G),$$ a contradiction.
This completes the proof.
{\hfill}
\end{proof}

\begin{table}[htbp]
  \centering
  \begin{tabular}{lccccc} 
    \Xhline{1pt}
     Type    &Forest Type &  $G$ & $G_1$ &$G_2$ &$G_3$\\ 
    \Xhline{1pt}
    $1$    & $\{a\},\{b\},\{c\},\{d\}$     &$16$ &$4$ &4& $4$\\
     $2$   & $\{a,b\},\{c\},\{d\}$       &$12$ &$2$ &$4$&$4$\\
    $3$    & $\{a,c\},\{b\},\{d\}$      &$12$   &$4$&$2$&$4$ \\
     $4$   &  $\{a,d\},\{b\},\{c\}$    &$12$    &$4$&$4$&$2$\\
     $5$   & $\{b,c\},\{a\},\{d\}$      &$12$ &$4$ &$4$&$2$\\
       $6$   & $\{b,d\},\{a\},\{c\}$      &$12$ &$4$&$2$&$4$ \\
        $7$   & $\{c,d\},\{a\},\{b\}$      &$12$ &$2$ &$4$  &$4$\\
         $8$   & $\{a,b\},\{c,d\}$      &$9$ &$1$&$3$&$3$ \\
          $9$   & $\{a,c\},\{b,d\}$      &$9$ &$3$ &$1$& $3$\\
          $10$    & $\{a,d\},\{b,c\}$      &$9$   &$3$&$3$&$1$ \\
          $11$   & $\{a\},\{b,c,d\}$      &$8$ &$2$ &$2$&$2$ \\
        $12$   & $\{b\},\{a,c,d\}$      &$8$ &$2$  &$2$&$2$\\
         $13$   & $\{c\},\{a,b,d\}$      &$8$ &$2$&$2$&$2$ \\
          $14$   & $\{d\},\{a,b,c\}$      &$8$ &$2$ &$2$&$2$ \\
          $15$    & $\{a,b,c,d\}$      &$5$   &$1$&$1$&$1$ \\
   \Xhline{1pt}
  \end{tabular}
  \caption{All the possible types of spanning forests of $G-x=G_i-M_i$, along with the number of ways to extend them into spanning forests of $G$, $G_1$, $G_2$ and $G_3$.}
  \label{tab2}
\end{table}

Now we start to prove Theorem \ref{xh-1}.
\begin{proof}[\em\textbf {Proof of Theorem \ref{xh-1}}]
Suppose that there exists a graph $G\in \mathcal{B}_{n}\setminus \mathcal{E}$  with the minimum number of edges  such that $F(G)<q(G)$. Note that $n\geq4$ and $G\neq K_4$ as $F(K_4)=38> 37=\lceil q(K_4)\rceil$. If $G$ has maximum degree less than $4$, then Theorem \ref{x2} implies that
\begin{displaymath}
\begin{split}
F(G)&\geq2^{n_2+n_3-1}3^{\frac{n_3+2}{4}}=2^{n_2-3}\times(2\times3^{\frac{1}{4}})^{n_3+2}\\
&>2^{n_2-3}\times(2^{\frac{3}{5}}198^{\frac{1}{10}})^{n_3+2}\\
&=2^{n_2+\frac{3n_3-9}{5}}198^{\frac{n_3+2}{10}}=q(G)
\end{split}
\end{displaymath}
since $2\times3^{1/4}>2^{3/5}198^{1/10}$, a contradiction. Then $G$ has a vertex $v$ with $d_G(v)=4$. By Claim \ref{xh-9}, there are $2$ neighbours $u$ and $w$ of $v$ satisfying $uw\in E(G)$, that is, $\{u,v,w\}$ induces a triangle in $G$, which contradicts Claim \ref{xh-8}. This completes the proof.
{\hfill}
\end{proof}

\section*{Acknowledgements}
The work was supported by National Natural Science Foundation of China (Grant No.\ 12271251), Postgraduate Research \& Practice Innovation Program of Jiangsu Province, grant number KYCX25\_0625.


\section*{Appendix}
Link to SageMath code for checking some calculations in this paper: \url{https://doi.org/10.5281/zenodo.17339883}.

\end{document}